\newtheorem{theo}{Theorem}[section]
\newtheorem{lemma}[theo]{Lemma}
\newtheorem{defn}[theo]{Definition}
\newtheorem{cor}[theo]{Corollary}
\newtheorem{counter}[theo]{Counterexample}
\newtheorem{rem}[theo]{Remarks}
\newenvironment{proof}{\noindent {\sc Proof}.}
                {\phantom{a} \hfill \framebox[2.2mm]{ } \bigskip}
\newcommand{\ZZ}{\mathbb{Z}}
\def\int{{\rm int}}
\newcommand{\G}{{\cal G}}
\renewcommand{\H}{{\cal H}}
\renewcommand{\L}{{\cal L}}
\renewcommand{\b}{\backslash}
\renewcommand{\phi}{\varphi}
\renewcommand{\theta}{\vartheta}
\newcommand\mset[1]{\left\{\!\!\left\{#1\right\}\!\!\right\}}  
\title{Hypergraphs: connection and separation}
\author{M. Amin Bahmanian\footnote{Email: mbahman@ilstu.edu. Phone: +309-438-8781  	ext. 7595. Mailing address: Department of Mathematics, Illinois State University, Stevenson Hall 313, Campus Box 4520, Normal, Illinois, 61790-4520, USA.} $\,$
and Mateja \v{S}ajna\footnote{Email: msajna@uottawa.ca. Phone: +613-562-5800 ext. 3522. Mailing address: Department of Mathematics and Statistics, University of Ottawa, 585 King Edward Avenue, Ottawa, ON, K1N 6N5,Canada.} \\ University of Ottawa}
\begin{document}
\maketitle \baselineskip 17pt

\begin{abstract}
In this paper we study fundamental connectivity properties of hypergraphs from a graph-theoretic perspective, with the emphasis on cut edges, cut vertices, and blocks. To prepare the ground, we define various types of subhypergraphs, as well as various types of walks in a hypergraph. We then prove a number of new results involving cut edges, cut vertices, and blocks. In particular, we describe the exact relationship between the block decomposition of a hypergraph and the block decomposition of its incidence graph.

\medskip
\noindent {\em Keywords:} Hypergraph, incidence graph, walk, trail, path, cycle, connected hypergraph, cut edge, cut vertex, separating vertex, block.
\end{abstract}


\section{Introduction}

A data base search under ``hypergraph'' returns hundreds of journal articles published in the last couple of years alone, but only a handful of monographs. Among the latter, most either treat very specific problems in hypergraph theory (for example, colouring in \cite{Vol0} and even \cite{Vol}), or else are written with a non-mathematician audience in mind, and hence focus on applications (for example, \cite{Bre}). A mathematician or mathematics student looking for a general introduction to hypergraphs is left with Berge's decades-old {\em Hypergraphs} \cite{Ber1} and {\em Graphs and Hypergraphs} \cite{Ber}, and Voloshin's much more recent {\em Introduction to Graphs and Hypergraphs} \cite{Vol}, aimed at undergraduate students. The best survey on hypergraphs that we could find, albeit already quite out of date, is Duchet's chapter \cite{Duc} in the {\em Handbook on Combinatorics}. In particular, it describes the distinct paths that lead to the study of hypergraphs from graph theory, optimization theory, and extremal combinatorics, explaining the fragmented terminology and disjointed nature of the results. Berge's work, for example, though an impressive collection of results, shows a distinct bias for hypergraphs arising from extremal set theory and optimization theory, and as such is rather unappealing to a graph theorist.

The numerous journal publications, on the other hand, treat a great variety of specific problems on hypergraphs. Graph theorists find various ways of generalizing concepts from graph theory, often without justifying their own approach or comparing it with others. The same term in hypergraphs (for example, cycle)  may have a variety of different meanings. Sometimes, authors implicitly assume that results for graphs extend to hypergraphs. A coherent theory of hypergraphs, as we know it for graphs, is sorely lacking.

This paper can serve as an introduction to hypergraphs from a graph-theoretic perspective, with a focus on basic connectivity. To prepare the ground for the more involved results on block decomposition of hypergraphs, we needed to carefully and systematically examine the fundamental connectivity properties of hypergraphs, attempting to extend basic results such as those found in the first two chapters of a graph theory textbook. We are strongly biased in our approach by the second author's graph-theoretic perspective, as well as in our admiration for Bondy and Murty's graph theory ``bible'' \cite{BonMur} and its earlier incarnation \cite{BonMur0}.

This paper is organized as follows. In Section 2 we present the basic concepts involving hypergraphs, as well as some immediate observations. Section 3 forms the bulk of the work: from graphs to hypergraphs, we generalize the concepts of various types of walks, connection, cut edges and cut vertices, and blocks, and prove a number of new results involving these concepts.

\section{Basic definitions}

This section should serve as a rather comprehensive introduction to basic hypergraphs concepts. The graph-theoretic terms used in this article are either analogous to the hypergraph terms defined here, or else are standard and can be found in \cite{BonMur}.

\subsection{Hypergraphs}

We shall begin with some basic definitions, followed by a few comments on alternative terms found in the literature.

\begin{defn}{\rm
A hypegraph $H$ is an ordered pair $(V,E)$, where  $V$ and $E$ are disjoint finite sets such that $V \ne \emptyset$, together with a function  $\psi: E \rightarrow 2^V$, called the {\em incidence function}. The elements of $V=V(H)$ are called {\em vertices}, and the elements of $E=E(H)$ are called {\em edges}. The number of vertices $|V|$ and number of edges $|E|$ are called the {\em order} and {\em size} of the hypergraph, respectively. Often we denote $n=|V|$ and $m=|E|$. A hypergraph with a single vertex is called {\em trivial}, and a hypergraph with no edges is called {\em empty}.

Two edges $e,e' \in E$ are said to be {\em parallel} if $\psi(e)=\psi(e')$, and the number of edges parallel to edge $e$ (including $e$) is called the {\em multiplicity} of $e$. A hypergraph $H$ is called {\em simple} if no edge has multiplicity greater than 1; that is, if $\psi$ is injective.
}
\end{defn}
As is customary for graphs, the incidence function may be omitted when no ambiguity can arise (in particular, when the hypergraph is simple, or when we do not need to distinguish between distinct parallel edges). An edge $e$ is then identified with the subset $\psi(e)$ of $V$, and for $v \in V$ and $e \in E$, we then more conveniently write $v \in e$ or $v \not\in e$ instead of $v \in \psi(e)$ or $v \not\in \psi(e)$, respectively. Moreover, $E$ is then treated as a multiset, and we use double braces to emphasize this fact when needed. Thus, for example, $\{ 1,2\}=\mset{1,2}$ but $\{ 1,1,2\}=\{1,2\} \ne \mset{1,1,2}$.

\begin{defn}{\rm
Let $H=(V,E)$ be a hypergraph. If $v,w \in V$ are distinct vertices and there exists $e \in E$ such that $v,w \in e$, then $v$ and  $w$ are said to be {\em adjacent} in $H$ (via edge $e$). Similarly, if $e,f \in E$ are distinct (but possibly parallel) edges and $v \in V$ is  such that $v \in e \cap f$, then $e$ and $f$ are said to be {\em adjacent} in $H$ (via vertex $v$).

Each ordered pair $(v,e)$ such that $v \in V$, $e \in E$, and $v \in e$ is called a {\em flag} of $H$; the (multi)set of flags is denoted by $F(H)$. If $(v,e)$ is a flag of $H$, then we say that vertex $v$ is {\em incident} with edge $e$.

The {\em degree} of a vertex $v \in V$ (denoted by $\deg_H(v)$ or simply $\deg(v)$ if no ambiguity can arise) is the number of edges $e \in E$ such that $v \in e$. A vertex of degree 0 is called {\em isolated}, and a vertex of degree 1 is called {\em pendant}. A hypergraph $H$ is {\em regular of degree $r$} (or {\em $r$-regular}) if every vertex of $H$ has degree $r$.

The maximum (minimum) cardinality $|e|$ of any edge $e \in E$ is called the {\em rank} ({\em corank}, respectively) of $H$. A hypergraph $H$ is {\em uniform of rank} $r$ (or {\em $r$-uniform}) if $|e|=r$ for all $e \in E$. An edge $e \in E$ is called a {\em singleton edge} if $|e|=1$, and {\em empty} if $|e|=0$.
}
\end{defn}

\begin{rem}{\rm
In \cite{Vol,Bre}, a hypergraph is called {\em simple} if no edge is contained in another. In \cite{Duc,Bre}, an edge of cardinality 1 is called a {\em loop}. We shall not use this term, though, since in graph theory --- particularly in the context of connection --- it is more convenient to think of a loop as a multiset of vertices; that is, a loop contains a single vertex of multiplicity 2. Note that, while one could allow edges of a hypergraph to be multisets (rather than just sets) of vertices, we shall not consider this option.

Furthermore, in \cite{Bre}, two edges are called {\em incident} (rather than adjacent) if they share a vertex, and a vertex is adjacent to itself if it lies in a singleton edge. In addition, terms {\em empty hypergraph} and {\em trivial hypergraph} have a different meaning.
}
\end{rem}

The concepts of isomorphism and incidence matrix, to be defined below, are straightforward generalizations from graphs and designs.

\begin{defn}{\rm
Let $H_1=(V_1,E_1)$ and $H_2=(V_2,E_2)$ be hypergraphs with incidence functions $\psi_1$ and $\psi_2$, respectively. An {\em isomorphism} from $H_1$ to $H_2$ is a pair $(\phi,\theta)$ of bijections $\phi: V_1 \rightarrow V_2$ and $\theta: E_1 \rightarrow E_2$ such $\phi(\psi_1(e))=\psi_2(\theta(e))$ for all $e \in E_1$. Hypergraphs $H_1$ and $H_2$ are called {\em isomorphic} if there exists an isomorphism from $H_1$ to $H_2$.
}
\end{defn}
Omitting the incidence function, an isomorphism from $H_1$ to $H_2$ is simply a bijection $\phi: V_1 \rightarrow V_2$ such that $\mset{ \phi(e): e \in E_1} = E_2$.

\begin{defn}{\rm
Let $H=(V,E)$  be a hypergraph with $V=\{ v_1,\ldots,v_n \}$ and $E=\{ e_1,\ldots,e_m \}$, where $m \ne 0$. The incidence matrix of $H$ is an $n \times m$ matrix $M=(m_{ij})$ such that
$$m_{ij}=\left\{ \begin{array}{rl}
                    1 & \mbox{ if } v_i \in e_j \\
                    0 & \mbox{ otherwise}
                \end{array} \right. .
$$
}
\end{defn}
The following easy observation allows us to think of non-empty hypergraphs simply as 0-1 matrices.
\begin{lemma}
For any positive integers $m$ and $n$, let $M$ be an $n \times m$ 0-1 matrix. Then there exists a hypergraph $H=(V,E)$ with $|V|=n$ and $|E|=m$ such that $M$ is its incidence matrix.
\end{lemma}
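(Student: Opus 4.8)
The plan is to give a direct, constructive proof: from the matrix $M$ I will read off a hypergraph explicitly and then verify that its incidence matrix is exactly $M$. First I would fix a set of $n$ distinct symbols $V = \{ v_1, \ldots, v_n \}$ to serve as vertices and a set of $m$ distinct symbols $E = \{ e_1, \ldots, e_m \}$ to serve as edges, chosen so that $V \cap E = \emptyset$. Since $m$ and $n$ are positive integers, both sets are finite and $V \ne \emptyset$, as required by the definition of a hypergraph.

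Next I would define the incidence function $\psi : E \rightarrow 2^V$ by setting, for each column index $j$, $\psi(e_j) = \{ v_i : m_{ij} = 1 \}$. This is well defined, since each such set is a subset of $V$. It behaves correctly in the degenerate cases as well: if two columns of $M$ coincide, the corresponding edges are distinct elements of $E$ that happen to be parallel, and if a column is identically zero, the corresponding edge is empty; both situations are permitted by the definitions. Thus $H = (V,E)$ with incidence function $\psi$ is a genuine hypergraph of order $n$ and size $m$.

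Finally I would verify that $M$ is the incidence matrix of $H$. Writing $M' = (m'_{ij})$ for the incidence matrix of $H$ as given by Definition~2.4, we have $m'_{ij} = 1$ precisely when $v_i \in \psi(e_j)$, and by the construction of $\psi$ this holds precisely when $m_{ij} = 1$. Hence $m'_{ij} = m_{ij}$ for all $i$ and $j$, so $M' = M$, completing the argument.

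There is no genuine obstacle here; the statement is essentially a bookkeeping observation asserting that passing to the incidence matrix loses no information. The only points requiring any care are purely definitional, namely ensuring that $V$ and $E$ are taken disjoint so that $(V,E)$ literally meets the definition of a hypergraph, and noting that the construction faithfully records repeated columns as parallel edges and zero columns as empty edges, so that no clause of the definition is violated.
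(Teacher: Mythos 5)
Your proof is correct and is precisely the routine construction the paper has in mind; indeed, the paper states this lemma as an ``easy observation'' and omits the proof entirely. Your attention to the definitional details --- choosing $V$ and $E$ disjoint, and noting that repeated columns yield parallel edges and zero columns yield empty edges, both permitted --- covers exactly the points where care is needed.
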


Since we do have the notion of vertex adjacency for hypergraphs, we could also define (analogously to the adjacency matrix of a graph) the adjacency matrix of a hypergraph. However, in general, the adjacency matrix of a hypergraph, as opposed to a graph, will not contain full information about the hypergraph, and hence is of limited use.

Counting flags in two different ways, we easily obtain the following analogue of the Handshaking Lemma for graphs, and its immediate corollary.
\begin{lemma}
Let $H=(V,E)$ be a hypergraph with the flag (multi)set $F$. Then
$$ \sum_{v \in V} \deg(v)=|F|=\sum_{e \in E} |e|.$$
\end{lemma}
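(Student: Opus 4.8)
The plan is to evaluate $|F|$ by partitioning the flag (multi)set $F$ in two different ways and counting the resulting blocks. Recall that a flag is an ordered pair $(v,e)$ with $v \in V$, $e \in E$, and $v \in e$; the two coordinates give two natural ways to group the flags.

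First I would fix a vertex $v \in V$ and consider the set of flags whose first coordinate is $v$, namely $F_v = \{ (v,e) : e \in E, \, v \in e \}$. By the very definition of degree, the number of edges $e$ with $v \in e$ is $\deg(v)$, so $|F_v| = \deg(v)$. Since every flag lies in exactly one $F_v$, the sets $\{F_v : v \in V\}$ partition $F$, and summing the block sizes gives $|F| = \sum_{v \in V} \deg(v)$.

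Symmetrically, I would fix an edge $e \in E$ and consider the flags whose second coordinate is $e$, namely $F^e = \{ (v,e) : v \in V, \, v \in e \}$. Here the number of vertices $v$ with $v \in e$ is exactly $|e|$, so $|F^e| = |e|$. Again every flag lies in exactly one $F^e$, so $\{F^e : e \in E\}$ partitions $F$, and summing yields $|F| = \sum_{e \in E} |e|$. Combining the two expressions for $|F|$ proves the claim.

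There is no genuine obstacle here; the only point requiring minor care is the bookkeeping with parallel edges. Because distinct (even parallel) edges are distinct elements of $E$, the pairs $(v,e)$ and $(v,e')$ for $e \ne e'$ are distinct flags, so both partitions are over honest index sets and no multiplicities are lost or double-counted. Equivalently, for a non-empty hypergraph one may read off the identity from the incidence matrix $M$: the quantity $|F|$ is the sum of all entries of $M$, the row sums are the vertex degrees, and the column sums are the edge cardinalities, so computing the total by summing over rows first and over columns first gives the two sides.
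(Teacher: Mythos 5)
Your proof is correct and follows exactly the approach the paper indicates (it states the lemma with the remark ``Counting flags in two different ways, we easily obtain\ldots'' and omits the details): you partition the flag multiset $F$ once by first coordinate to get $\sum_{v \in V}\deg(v)$ and once by second coordinate to get $\sum_{e \in E}|e|$. Your added care with parallel edges and the incidence-matrix reformulation are both sound and consistent with the paper's definitions.
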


\begin{cor}
A hypergraph has an even number of vertices of odd degree if and only if it has an even number of edges of odd cardinality.
\end{cor}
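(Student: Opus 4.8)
The plan is to deduce this purely from the Handshaking Lemma above by a parity argument, reducing the equation $\sum_{v \in V} \deg(v) = \sum_{e \in E} |e|$ modulo $2$. The key observation is that, in any finite sum of nonnegative integers, the parity of the sum is determined solely by the number of odd summands: each even term contributes $0 \pmod 2$ and each odd term contributes $1 \pmod 2$.

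First I would partition the vertex set according to the parity of degree, obtaining the congruence $\sum_{v \in V} \deg(v) \equiv |\{v \in V : \deg(v) \text{ odd}\}| \pmod 2$, since the even-degree vertices contribute nothing modulo $2$. Similarly, partitioning the edge set by the parity of cardinality gives $\sum_{e \in E} |e| \equiv |\{e \in E : |e| \text{ odd}\}| \pmod 2$.

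Since the Lemma guarantees that these two sums are equal (both being $|F|$), they are congruent modulo $2$, and hence so are the two counts on the right-hand sides above. Thus the number of odd-degree vertices and the number of odd-cardinality edges have the same parity, which is precisely the claimed equivalence: one is even if and only if the other is.

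I do not anticipate any genuine obstacle here, as the statement is an immediate corollary of the preceding Lemma; the only care required is the elementary bookkeeping that the parity of a sum of integers equals the number of its odd terms modulo $2$.
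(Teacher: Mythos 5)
Your proof is correct and matches the paper's intent exactly: the paper states this as an immediate corollary of the Handshaking Lemma $\sum_{v \in V} \deg(v) = |F| = \sum_{e \in E} |e|$ without spelling out details, and your reduction of that identity modulo $2$ (so that each side is congruent to the number of its odd summands) is precisely the intended argument. No gaps; the bookkeeping about parity of a sum of integers is exactly the right observation.
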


\subsection{New hypergraphs from old}

In this section, we first give a comprehensive list of various types of useful substructures found in  hypergraphs. The following definitions are from \cite{Duc} and appear to be (so far) standard in hypergraph theory.

\begin{defn}{\rm \cite{Duc}
Let $H=(V,E)$ be a hypergraph.
\begin{enumerate}
\item  A hypergraph $H'=(V',E')$ is called a {\em subhypergraph} of $H$ if the incidence matrix of $H'$, after a suitable permutation of its rows and columns, is a submatrix of the incidence matrix of $H$.
\item For $V' \subseteq V$, a {\em subhypergraph of $H$ induced by $V'$} is the hypergraph with vertex set $V'$ and edge multiset $E'=\mset{ e \cap V': e \in E, e \cap V' \ne \emptyset }$.
\item For $E' \subseteq E$, the {\em partial hypergraph of $H$ determined by $E'$} is the hypergraph  $(\cup_{e \in E'} e, E')$.
\item For $V' \subseteq V$, the {\em trace of $H$ by $V'$}
is the partial hypergraph of $H$ determined by the multiset of edges $\mset{ e \in E: e \subseteq V' }$.
\end{enumerate}
}
\end{defn}

\begin{rem}{\rm For anyone trying to generalize graph-theoretic terms to hypergraphs, the above terms may be confusing or inadequate. To illustrate this issue, let $H=(V,E)$ be a 2-uniform hypergraph. Then $H$ can be thought of as a graph. However:
\begin{enumerate}
\item A subhypergraph of $H$ may contain edges of cardinality less than 2, and hence is not a subgraph of $H$.
\item Similarly, a subhypergraph of $H$ induced by $V' \subseteq V$ may contain edges of cardinality 1, and hence is not a subgraph of $H$.
\item The partial hypergraph of $H$ determined by $E' \subseteq E$ is just a subgraph of $H$ induced by the edge set $E'$.
\item The trace of $H$ by $V' \subseteq V$ is the subgraph of $H$ induced by the set of vertices $V''=\{ v \in V': v \in e \mbox{ for some } e \in E, e \subseteq V' \}$.
\end{enumerate}
Moreover, for a general hypergraph $H$:
\begin{enumerate}
\item Neither $H$ nor its subhypergraph can be empty.
\item A subhypergraph of $H$ induced by a set of isolated vertices of $H$ is not a subhypergraph, because it is empty.
\item A subhypergraph $H'$ of $H$ need not be simple even if $H$ is, and $H'$ may contain empty edges (unless it is an induced subhypergraph) or edges of cardinality 1 even if $H$ does not.
\item If $H'$ is the trace of $H$ by $V' \subseteq V$, then the vertex set of $H'$ need not be $V'$; it may be a proper subset of $V'$.
\end{enumerate}
}
\end{rem}

In view of these problems, we propose the following modified and additional definitions to be used in this paper.
\begin{defn}{\rm
Let $H=(V,E)$ be a hypergraph.
\begin{enumerate}
\item  A hypergraph $H'=(V',E')$ is called a {\em subhypergraph} of $H$ if $V' \subseteq V$ and either $E'=\emptyset$ or the incidence matrix of $H'$, after a suitable permutation of its rows and columns, is a submatrix of the incidence matrix of $H$. (Thus, every edge $e' \in E'$ is of the form $e \cap V'$ for some $e \in E$, and the corresponding mapping from $E'$ to $E$ is injective.)
\item A subhypergraph $H'=(V',E')$ of $H$ with $E'=\mset{ e \cap V': e \in E, e \cap V' \ne \emptyset }$ is said to be {\em induced by $V'$}.
\item If $|V| \ge 2$ and $v \in V$, then $H\b v$ will denote the subhypergraph of $H$ induced by $V-\{ v\}$, also called a {\em vertex-deleted subhypergraph} of $H$.
\item  A hypergraph $H'=(V',E')$ is called a {\em hypersubgraph} of $H$ if $V' \subseteq V$ and $E' \subseteq E$.
\item  A hypersubgraph $H'=(V',E')$ of $H$ is said to be {\em induced by $V'$},  denoted by $H[V']$, if $E'=\mset{ e \in E: e \subseteq V', e \ne \emptyset }$.
\item  A hypersubgraph $H'=(V',E')$ of $H$ is said to be {\em induced by $E'$}, denoted by $H[E']$, if $V'=\cup_{e \in E'} e$.
\item For $E' \subseteq E$ and $e \in E$, we write shortly $H - E'$ and $H-e$ for the hypersubgraphs $(V,E-E')$ and $(V,E-\mset{e})$, respectively. The hypersubgraph $H-e$ may also be called an {\em edge-deleted hypersubgraph}.
\item A hypersubgraph $H'=(V',E')$ of $H$ is called {\em spanning} if $V'=V$.
\item An {\em $r$-factor} of $H$ is a spanning $r$-regular hypersubgraph of $H$.
\end{enumerate}
}
\end{defn}

Observe that, informally speaking, the vertex-deleted subhypergraph $H \b v$ is obtained from $H$ by removing vertex $v$ from $V$ and from all edges of $H$, and then discarding the empty edges.

It is easy to see that every hypersubgraph of $H=(V,E)$ is also a subhypergraph of $H$, but not conversely. However, not every  hypersubgraph of $H$ induced by $V' \subseteq V$ is a subhypergraph of $H$ induced by $V'$.

Observe also that if $H$ is a 2-uniform hypergraph (and hence a loopless graph), its hypersubgraphs, vertex-subset-induced hypersubgraphs, edge-subset-induced hypersubgraphs, edge-deleted hypersubgraphs, spanning hypersubgraphs, and factors are precisely its subgraphs, vertex-subset-induced subgraphs, edge-subset-induced subgraphs, edge-deleted subgraphs, spanning subgraphs, and factors (in the graph-theoretic sense), respectively. However, its vertex-deleted subgraphs are obtained by deleting all singleton edges from its vertex-deleted subhypergraphs.

\begin{rem}{\rm In \cite{Vol}, a {\em subhypergraph} is defined as our hypersubgraph, a {\em partial hypergraph} as our spanning hypersubgraph, and a {\em subhypergraph  induced by a subset of vertices} as our vertex-set-induced hypersubgraph. We do, however, appreciate the more general definition of a subhypergraph from \cite{Duc}, and would like to make a distinction between subhypergraphs that are hypersubgraphs and those that are not.

Note also that in \cite{Vol},  edge deletion as defined above is called {\em weak} edge deletion, and weak vertex deletion is defined as our vertex deletion except that empty edges are not discarded. In addition, strong vertex and edge deletion are defined as follows. To {\em strongly delete} a vertex $v$ from a hypergraph $H=(V,E)$, we remove vertex $v$ from $V$ and remove all edges containing $v$ from $E$. To {\em strongly delete} an edge $e$ from $H$, we remove edge $e$ from $E$, as well as all vertices contained in $e$ from both $V$ and from all edges incident with them.
}
\end{rem}

Next, we define union and intersection of hypergraphs. The incidence function will be needed to make this definition precise.

\begin{defn}\label{def:U}{\rm
Let $H_1=(V_1,E_1)$ and $H_2=(V_2,E_2)$ be hypergraphs with incidence functions $\psi_1$ and $\psi_2$, respectively, such that $\psi_1 \vert_{E_1 \cap E_2}=\psi_2 \vert_{E_1 \cap E_2}$. Let $\psi: E_1 \cup E_2 \rightarrow 2^{V_1 \cup V_2}$ be defined by
$$\psi(e)=\left\{ \begin{array}{rl}
                    \psi_1(e) & \mbox{ if } e \in E_1 \\
                    \psi_2(e) & \mbox{ if } e \in E_2 \\
                    \end{array} \right. .$$
The {\em union} of $H_1$ and $H_2$, denoted $H_1 \cup H_2$, is then defined as the hypergraph $(V_1 \cup V_2,E_1 \cup E_2)$ with the incidence function $\psi$, and the {\em intersection} of $H_1$ and $H_2$, denoted $H_1 \cap H_2$, as the hypergraph $(V_1 \cap V_2,E_1 \cap E_2)$ with the incidence function $\psi \vert_{E_1 \cap E_2}$.

If a hypergraph $H$ is an edge-disjoint union of hypegraphs $H_1$ and $H_2$ (that is, $H=H_1 \cup H_2$ with $E_1 \cap E_2 =\emptyset$), then we say that $H$ {\em decomposes} into $H_1$ and $H_2$, and write $H=H_1 \oplus H_2$.
}
\end{defn}

The last operation we shall introduce is the dual, clearly inherited from designs.

\begin{defn}{\rm
The {\em dual} of a non-empty hypergraph $H$ is a hypergraph $H^T$ whose incidence matrix is the transpose of the incidence matrix of $H$.
}
\end{defn}

To obtain the dual $H^T=(E^T,V^T)$ of a hypergraph $H=(V,E)$, we label the edges of $H$ as $e_1,\ldots,e_m$ (with distinct parallel edges receiving distinct labels). Then let $E^T=\{ e_1,\ldots,e_m \}$ and $V^T=\mset{ v^T: v \in V}$, where $v^T=\{ e \in E^T: v \in e \}$ for all $v \in V$. Observe that $(v,e) \in F(H)$ if and only if $(e,v^T) \in F(H^T)$. Hence $(H^T)^T=H$.

\begin{lemma}\label{lem:delT}
Let $H=(V,E)$ be a non-empty hypergraph with the dual $H^T=(E^T,V^T)$, and let $v\in V$ and $e \in E$. Then:
\begin{enumerate}
\item $\deg_H(v)=|v^T|$.
\item $v$ is an isolated vertex (pendant vertex) in $H$ if and only if $v^T$ is an empty edge (singleton edge, respectively) in $H^T$.
\item If $|V|\ge 2$, $H$ has no empty edges, and $\{ v \} \not\in E$, then $(H \b v)^T=H^T - v^T$.
\item If $|E| \ge 2$, $H$ has no isolated vertices, and $e$ contains no pendant vertices, then $(H-e)^T=H^T \b e$.
\end{enumerate}
\end{lemma}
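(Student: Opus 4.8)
The plan is to handle the four parts in order, leveraging the earlier parts together with the duality involution $(H^T)^T=H$. For part 1, observe that by construction $v^T=\{e\in E^T:v\in e\}$ is precisely the set of edges of $H$ containing $v$, so $|v^T|$ equals the number of edges incident with $v$, which is $\deg_H(v)$. Part 2 is then an immediate consequence: $v$ is isolated exactly when $\deg_H(v)=0$, i.e.\ $|v^T|=0$, i.e.\ $v^T$ is an empty edge; and $v$ is pendant exactly when $\deg_H(v)=1$, i.e.\ $|v^T|=1$, i.e.\ $v^T$ is a singleton edge.

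The real content is in part 3, which I would prove by comparing incidence matrices. Let $M$ be the incidence matrix of $H$ (rows indexed by $V$, columns by $E$), so that $M^T$ is the incidence matrix of $H^T$ (rows indexed by the vertices $E^T$, columns by the edges $V^T$). The two hypotheses are exactly what is needed to control the edge multiset of $H\b v$: since $H$ has no empty edges and $\{v\}\notin E$, every edge $e\in E$ satisfies $e\setminus\{v\}\ne\emptyset$, so no edge is discarded in forming $H\b v$ and the edge correspondence $e\mapsto e\setminus\{v\}$ is a bijection. Consequently the incidence matrix of $H\b v$ is obtained from $M$ simply by deleting the row indexed by $v$, and hence the incidence matrix of $(H\b v)^T$ is $M^T$ with the column indexed by $v$ deleted. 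On the other hand, the edge $v^T$ of $H^T$ is exactly the column of $M^T$ indexed by $v$, so the incidence matrix of $H^T-v^T$ is likewise $M^T$ with that column removed. Since the two hypergraphs have the same incidence matrix, $(H\b v)^T=H^T-v^T$.

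Finally, part 4 follows from part 3 applied to the dual. Setting $K=H^T$ and taking the vertex $e\in E^T=V(K)$, I would verify that the three hypotheses of part 3 for the pair $(K,e)$ translate precisely into the hypotheses of part 4: first, $|V(K)|=|E|\ge 2$; second, the edges of $K$ are the sets $v^T$, and by part 2 none of them is empty exactly when $H$ has no isolated vertices; third, $\{e\}$ is an edge of $K$ exactly when some vertex $v$ satisfies $v^T=\{e\}$, i.e.\ $v$ is pendant with unique edge $e$, so $\{e\}\notin E(K)$ exactly when $e$ contains no pendant vertex. Part 3 then yields $(K\b e)^T=K^T-e^T$; using $(H^T)^T=H$ and the fact that the dual-vertex $e^T$ in the double dual is just the edge $e$ of $H$, this reads $(H^T\b e)^T=H-e$, and taking the dual of both sides gives $(H-e)^T=H^T\b e$.

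The main obstacle is the careful bookkeeping in part 3, specifically justifying that under the stated hypotheses no edge of $H$ is lost in passing to $H\b v$, so that the clean row-deletion/column-deletion description of the incidence matrices is exactly correct and not merely correct up to some spurious parallel or empty edges; and, in part 4, checking that the translation of hypotheses between $H$ and $H^T$ via part 2 is an exact equivalence rather than a loose analogy, and that $e^T$ really is identified with $e$ under the canonical identification $(H^T)^T=H$.
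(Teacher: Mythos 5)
Your proof is correct. Parts 1 and 2 match the paper (both follow directly from the definition of the dual), and your part 3, though phrased via incidence matrices rather than the paper's flag-deletion language, rests on exactly the same key observation: the hypotheses (no empty edges, $\{v\}\notin E$) guarantee that no edge of $H$ is discarded when forming $H\b v$, so vertex deletion in $H$ corresponds precisely to edge deletion in $H^T$. Where you genuinely diverge is part 4. The paper proves it directly, by an argument symmetric to part 3: deleting $e$ and its flags from $H^T$ is vertex deletion provided no empty edges arise, and an empty edge of $(H-e)^T$ would force an isolated vertex of $H$ or a pendant vertex in $e$, which the hypotheses exclude. You instead derive part 4 from part 3 applied to $K=H^T$, using the involution $(H^T)^T=H$ and part 2 to translate hypotheses: no isolated vertices of $H$ becomes no empty edges of $K$, and ``$e$ contains no pendant vertices'' becomes $\{e\}\notin E(K)$. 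Your route is more economical and makes explicit that statements 3 and 4 are duals of one another (and in particular explains why the hypotheses of part 4 are what they are); its cost is the bookkeeping you yourself flag --- verifying that the hypothesis translation is an exact equivalence, that the identification $(H^T)^T=H$ sends the double-dual vertex $e^T$ to the edge $e$, and that the various hypergraphs involved (in particular $H^T\b e$) are non-empty so that all duals are defined. These checks all go through: for instance, $H^T$ is always non-empty since $V\neq\emptyset$, and $H^T\b e$ is non-empty because part 3's conclusion presupposes it. The paper's direct proof avoids this overhead at the price of repeating, in mirrored form, essentially the same argument twice.
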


\begin{proof}
The first two statements of the lemma follow straight from the definition of vertex degree.

To see the third statement, assume that $|V|\ge 2$, $H$ has no empty edges, and  $\{ v \} \not\in E$. Now $H \b v$ is obtained from $H$ by  deleting vertex $v$, deleting all flags containing $v$ from $F(H)$, and discarding all resulting empty edges. Hence $(H \b v)^T$ is obtained from $H^T$ by deleting edge $v^T$, deleting all flags containing $v^T$ from $F(H^T)$, and discarding all resulting isolated vertices. However, any such isolated vertex would in $H^T$ correspond either to an isolated vertex or a pendant vertex incident only with the edge $v^T$. This would imply existence of an empty edge or an edge $\{ v \}$ in $H$, a contradiction. Hence $(H \b v)^T$ was obtained from $H^T$ just by deleting edge $v^T$ and all flags containing $v^T$; that is, $(H \b v)^T=H^T-v^T$.

To prove the fourth statement, assume that $|E| \ge 2$, $H$ has no isolated vertices, and $e$ contains no pendant vertices. Recall that $H-e$ is obtained from $H$, and similarly $(H-e)^T$ from $H^T$, by  deleting $e$ and all flags containing $e$. This operation on $H^T$ is exactly vertex deletion provided that $(H-e)^T$ has no empty edges. Now an empty edge in $(H-e)^T$ corresponds to an isolated vertex in $H-e$, and hence in $H$, it corresponds either to an isolated vertex or a pendant vertex incident with $e$. However, by assumption, $H$ does not have such vertices. We conclude that $(H-e)^T=H^T \b e$ as claimed.
\end{proof}

\subsection{Graphs associated with a hypergraph}

A hypergraph is, of course, an incidence structure, and hence can be represented with an incidence graph (to be defined below). This representation retains complete information about the hypergraph, and thus allows us to translate problems about hypergraphs into problems about graphs --- a much better explored territory.

\begin{defn}{\rm
Let  $H=(V,E)$ be a hypergraph with incidence function $\psi$. The {\em incidence graph} $\G(H)$ of $H$ is the graph $\G(H)=(V_{G},E_G)$ with $V_G=V \cup E$ and $E_G=\{ ve: v\in V, e \in E, v \in \psi(e)\}$.
}
\end{defn}

Observe that the incidence graph $\G(H)$ of a hypergraph $H=(V,E)$ with $E \ne \emptyset$ is a bipartite simple graph with bipartition $\{ V, E \}$. We shall call a vertex $x$ of $\G(H)$ a {\em v-vertex} if $x \in V$, and an {\em e-vertex} if $x \in E$. Note that the edge set of $\G(H)$ can be identified with the flag (multi)set $F(H)$; that is, $E_G=\{ ve: (v,e) \in F(H)\}$.

The following is an easy observation, hence the proof is left to the reader.
\begin{lemma}\label{lem:isom}
Let $H=(V,E)$ be a non-empty hypergraph and $H^T=(E^T,V^T)$ its dual. The incidence graphs $\G(H)$ and $\G(H^T)$ are isomorphic with an isomorphism $\phi: V \cup E \rightarrow E^T \cup V^T$ defined by $\phi(e)=e$ for all $e \in E$, and $\phi(v)=v^T$ for all $v \in V$.
\end{lemma}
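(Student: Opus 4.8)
The plan is to verify directly that the stated map $\phi$ is a bijection between the vertex sets of the two incidence graphs and that it carries edges to edges, using the flag correspondence between $H$ and $H^T$ already recorded immediately after the definition of the dual. Recall that the edge set of an incidence graph is identified with its flag multiset, so once $\phi$ is shown to be a vertex bijection, preserving adjacency amounts to matching up flags.

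First I would check that $\phi$ is a bijection from $V \cup E$ onto $E^T \cup V^T$. On the edges of $H$ the map acts as the identity $e \mapsto e$, which is a bijection from $E$ onto $E^T$, since $E^T=\{ e_1,\ldots,e_m\}$ is merely a relabelled copy of $E$. On the vertices of $H$ the map acts as $v \mapsto v^T$, which is a bijection from $V$ onto the multiset $V^T=\mset{ v^T: v \in V}$ by the very construction of the dual. Because $V$ and $E$ are disjoint, as are $E^T$ and $V^T$ (these being the vertex set and edge set of $H^T$), the two bijections combine into a single bijection $\phi: V \cup E \to E^T \cup V^T$. Note that since $H$ is non-empty we have $E \ne \emptyset$, and since $V\ne\emptyset$ the dual has $V^T\ne\emptyset$, so both $\G(H)$ and $\G(H^T)$ are simple graphs and no parallel-edge complications arise.

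Next I would verify that $\phi$ preserves adjacency. As $\G(H)$ is bipartite with parts $V$ and $E$, every edge of $\G(H)$ has the form $ve$ with $v \in V$, $e \in E$, and $(v,e) \in F(H)$. Under $\phi$ this edge is sent to $\phi(v)\phi(e)=v^T e$, a pair consisting of the e-vertex $v^T \in V^T$ and the v-vertex $e \in E^T$ of $\G(H^T)$. By definition of the incidence graph, $v^T e$ is an edge of $\G(H^T)$ precisely when $(e,v^T) \in F(H^T)$. But the construction of the dual yields exactly the equivalence $(v,e) \in F(H) \iff (e,v^T) \in F(H^T)$, so $ve$ is an edge of $\G(H)$ if and only if $\phi(v)\phi(e)$ is an edge of $\G(H^T)$; together with the fact that $\phi$ is a vertex bijection, this establishes that $\phi$ is a graph isomorphism.

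There is no genuine obstacle here: the lemma is in essence a repackaging of the flag correspondence $(v,e)\in F(H)\iff(e,v^T)\in F(H^T)$ combined with the identification of incidence-graph edges with flags. The only point demanding a little care is the bookkeeping of roles: the e-vertices of $\G(H)$ (elements of $E$) become v-vertices of $\G(H^T)$ (elements of $E^T$), while the v-vertices of $\G(H)$ (elements of $V$) become e-vertices of $\G(H^T)$ (elements of $V^T$), so that $\phi$ interchanges the two sides of the bipartition while respecting all incidences.
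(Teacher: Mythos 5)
Your proof is correct; the paper itself omits the argument (stating only that it is ``an easy observation, hence the proof is left to the reader''), and your direct verification via the flag correspondence $(v,e)\in F(H) \iff (e,v^T)\in F(H^T)$, together with the observation that $V^T$ is a multiset so that $v\mapsto v^T$ is a bijection even when distinct vertices have identical edge sets, is exactly the intended argument.
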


Next, we outline the relationship between subhypergraphs of a hypergraph and the subgraphs of its incidence graph. The proof of this lemma is straightforward and hence omitted.

\begin{lemma}\label{lem:sub-inc}
Let $H=(V,E)$ be a hypergraph and $H'=(V',E')$ a subhypergraph of $H$. Then:
\begin{enumerate}
\item $\G(H')$ is the subgraph of $\G(H)$ induced by the vertex set $V' \cup E'$.
\item If $H'$ is a hypersubgraph of $H$, then in addition, $\deg_{\G(H')}(e)=\deg_{\G(H)}(e)=|e|$ for all $e \in E'$.
\end{enumerate}
Conversely, take a subgraph $G'$ of $\G(H)$. Then:
\begin{enumerate}
\item $V(G')=V' \cup E'$ for some $V' \subseteq V$ and $E' \subseteq E$, and $E(G') \subseteq \{ ve: v \in V', e \in E', v \in e\}$.
\item $G'$ is the incidence graph of a subhypergraph of $H$ if and only if $V' \ne \emptyset$ and for all $e \in E'$ we have $\{ ve: v \in e \cap V' \} \subseteq E(G')$.
\item $G'$ is the incidence graph of a hypersubgraph of $H$ if and only if $V' \ne \emptyset$ and $\deg_{G'}(e)=\deg_{\G(H)}(e)=|e|$ for all $e \in E'$.
\end{enumerate}
\end{lemma}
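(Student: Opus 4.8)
The plan is to prove both directions by directly unwinding the definitions of $\G(H)$, of subhypergraph, and of hypersubgraph, leaning on two structural facts about the incidence graph: it is bipartite with parts $V$ and $E$, and every e-vertex $e$ satisfies $\deg_{\G(H)}(e)=|e|$. Throughout I would identify the edge set $E'$ of a subhypergraph $H'$ with the corresponding subset of $E$ supplied by the injective column map from the definition, so that each $e\in E'$ carries incidence $\psi'(e)=e\cap V'$ in $H'$. This identification is what makes $V'\cup E'$ a genuine subset of $V(\G(H))=V\cup E$ and lets the hypergraph and graph pictures be compared at all; the e-vertices of $\G(H')$ then coincide as labels with e-vertices of $\G(H)$.

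For the forward direction, part 1 amounts to checking that $\G(H')$ and the induced subgraph $\G(H)[V'\cup E']$ have the same vertex set and the same edge set. The vertex sets are both $V'\cup E'$ by construction. For the edges, I would note that an edge $ve$ of $\G(H)$ lies in the induced subgraph exactly when $v\in V'$ and $e\in E'$, while $\G(H')$ has the edge $ve$ exactly when $v\in e\cap V'$; since $v\in V'$ forces the equivalence $v\in e\cap V'\iff v\in e$, the two edge sets coincide. Part 2 is then immediate: when $H'$ is a hypersubgraph each $e\in E'$ keeps its full incidence $\psi'(e)=e$, so the e-vertex $e$ retains all $|e|$ of its neighbours, giving $\deg_{\G(H')}(e)=|e|=\deg_{\G(H)}(e)$.

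For the converse, part 1 is just the observation that $V(G')\subseteq V\cup E$ splits as $V'\cup E'$ with $V'=V(G')\cap V$ and $E'=V(G')\cap E$, and that bipartiteness forces every edge of $G'$ to join a v-vertex of $V'$ to an e-vertex of $E'$. For part 2, I would form the candidate subhypergraph $H'=(V',E')$ with $\psi'(e)=e\cap V'$; its incidence graph has edge set exactly $\{ve : e\in E',\, v\in e\cap V'\}$, which by part 1 already contains $E(G')$. Hence $G'=\G(H')$ holds precisely when the reverse inclusion holds, i.e.\ when every incidence $ve$ with $v\in e\cap V'$ is present in $G'$ for each $e\in E'$; together with the requirement $V'\ne\emptyset$ (a hypergraph must have a vertex), this is exactly the stated condition.

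Part 3 is the one place needing a genuine, if short, argument, and I expect it to be the main obstacle, since it is where the distinction between truncated edges (subhypergraphs) and whole edges (hypersubgraphs) must be read off the graph. The key inequality is $\deg_{G'}(e)\le |e\cap V'|\le |e|$, valid for every $e\in E'$. If $\deg_{G'}(e)=|e|$, both inequalities become equalities, so $e\subseteq V'$ (the edge is not truncated) and all $|e|$ incidences are present, which upgrades the subhypergraph of part 2 to a hypersubgraph. Conversely any hypersubgraph keeps each edge whole and, by forward part 2, has $\deg_{\G(H')}(e)=|e|$. Thus the degree condition together with $V'\ne\emptyset$ characterises exactly the incidence graphs of hypersubgraphs. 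The only bookkeeping subtlety worth flagging is the edge identification set up at the outset: without tracking which column of the incidence matrix each edge of $H'$ came from, both the equality $V(G')=V'\cup E'$ and the injectivity built into the definition of subhypergraph would be ambiguous.
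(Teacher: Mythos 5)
The paper itself omits this proof (``straightforward and hence omitted''), so there is no argument of the authors to compare against; your proof is precisely the definition-unwinding they had in mind, and it is correct and complete. You also rightly isolate the only two points where care is genuinely needed: identifying $E'$ with its image in $E$ under the injective column map so that $\G(H')$ literally sits inside $\G(H)$, and the inequality chain $\deg_{G'}(e)\le|e\cap V'|\le|e|$, whose forced equalities are exactly what distinguishes hypersubgraphs (whole edges, all incidences present) from general subhypergraphs (truncated edges).
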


In the following lemma, we determine the incidence graphs of vertex-deleted subhypergraphs and  edge-deleted hypersubgraphs.
\begin{lemma}\label{lem:delete}
Let $H=(V,E)$ be a hypergraph. Then:
\begin{enumerate}
\item For all $e \in E$, we have $\G(H-e)=\G(H) \b e$.
\item  If $|V| \ge 2$, $H$ has no empty edges, and $v \in V$ is such that $\{ v \} \not\in E$, then $\G(H \b v)=\G(H) \b v$.
\end{enumerate}
\end{lemma}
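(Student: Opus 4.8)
The plan is to prove each of the two statements by showing that the two graphs in question have the same vertex sets and the same edge sets, working directly from the definition of the incidence graph $\G(H)$ as $(V \cup E, \{ve : v \in V, e \in E, v \in e\})$ together with the description of vertex deletion in a graph (deleting a vertex and all incident edges) and the descriptions of $H-e$ and $H \b v$ given earlier.

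For the first statement, I would start by recalling that $H-e = (V, E - \mset{e})$, so by definition $\G(H-e)$ has vertex set $V \cup (E - \mset{e})$ and edges $\{wf : w \in V, f \in E-\mset{e}, w \in f\}$. On the other side, $\G(H) \b e$ is obtained from $\G(H)$ by deleting the e-vertex $e$ and all edges of $\G(H)$ incident with it. Since the edges of $\G(H)$ incident with the e-vertex $e$ are exactly the flags $\{ve : v \in e\}$, deleting $e$ removes precisely these, leaving vertex set $V \cup (E - \mset{e})$ and edge set $\{wf : w \in f, f \ne e\}$. These match, so $\G(H-e) = \G(H) \b e$. This part is essentially a direct unwinding of definitions and should be short.

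For the second statement, under the hypotheses $|V| \ge 2$, $H$ has no empty edges, and $\{v\} \not\in E$, I would use the informal description noted just after the definition of vertex-deleted subhypergraph: $H \b v$ is obtained by removing $v$ from $V$ and from every edge, then discarding the resulting empty edges. So $V(H \b v) = V - \{v\}$, and its edges are $\{e - \{v\} : e \in E, e - \{v\} \ne \emptyset\}$, with the injective correspondence to $E$ from the subhypergraph definition. Meanwhile $\G(H) \b v$ deletes the v-vertex $v$ and all flags $\{ve : v \in e\}$ incident with it. The key point I would verify is that the e-vertices surviving in $\G(H) \b v$ are exactly those $e$ with $e - \{v\} \ne \emptyset$: an e-vertex $e$ becomes isolated in $\G(H) \b v$ precisely when its only neighbour was $v$, i.e. when $e \subseteq \{v\}$, which under the hypotheses (no empty edges, $\{v\} \not\in E$) cannot happen — so no e-vertex is isolated, matching the discarding of empty edges. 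I would then check the edge incidences agree: $we$ is an edge of $\G(H) \b v$ iff $w \in e$, $w \ne v$, which corresponds exactly to $w \in e - \{v\}$ in $H \b v$.

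The main obstacle, and the place where the hypotheses genuinely do work, is reconciling the two notions of ``discarding degenerate objects'': in $H \b v$ one discards empty edges, whereas in $\G(H) \b v$ the graph vertex deletion does \emph{not} automatically discard isolated vertices. I expect the crux of the argument to be verifying that these two operations nonetheless produce matching e-vertex sets, which is exactly where one needs that $H$ has no empty edges (else an empty edge would survive as an isolated e-vertex in $\G(H)\b v$ but already be absent as an edge) and that $\{v\} \not\in E$ (else the singleton $\{v\}$ would vanish from $H \b v$ as an empty edge but persist as an isolated e-vertex in $\G(H) \b v$). Once this correspondence is pinned down, the equality of vertex and edge sets follows routinely. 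I would note in passing that this second statement is the incidence-graph analogue of Lemma~\ref{lem:delT}(3), and indeed one could alternatively derive it by combining Lemma~\ref{lem:isom} with the first statement applied to the dual, but the direct verification is cleaner and avoids the extra hypotheses needed for duality.
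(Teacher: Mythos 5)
Your proposal is correct and follows essentially the same route as the paper's proof: both parts are verified by unwinding the definitions of $H-e$, $H \b v$, and graph vertex deletion, with the key observation that the hypotheses (no empty edges, $\{v\} \not\in E$) guarantee no empty edges arise in $H \b v$, so the hypergraph's discarding of empty edges and the incidence graph's retention of isolated e-vertices never come into conflict. The paper states this more tersely in terms of flags, but the argument is the same.
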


\begin{proof}
\begin{enumerate}
\item Recall that $H-e$ is obtained from $H$ by deleting $e$ from $E$, thus also destroying all flags containing $e$. This is equivalent to deleting $e$ from the vertex set of $\G(H)$, as well as all edges of $\G(H)$ incident with $e$, which results in the vertex-deleted subgraph $\G(H) \b e$.
\item Now $H \b v$ is obtained from $H$ by deleting $v$ from $V$ and from all edges containing $v$, and then discarding all resulting empty edges. However, if $H$ has no empty edges and $\{ v \} \not\in E$, then there are no empty edges to discard, and so this operation is equivalent to deleting $v$ from the vertex set of $\G(H)$ and deleting all edges of $\G(H)$ incident with $v$, resulting in the vertex-deleted subgraph $\G(H) \b v$. Hence $\G(H) \b v=\G(H \b v)$.
\end{enumerate}
\vspace{-1.6cm}
\end{proof}

Another graph associated with a hypergraph that can be useful --- although it does not contain full information about the hypergraph --- is the line graph (also called the intersection graph).

\begin{defn}{\rm
The {\em line graph} (or {\em intersection graph}) of the hypergraph $H=(V,E)$, denoted $\L(H)$, is the graph with vertex set $E$ and edge set $\{ e e': e, e'\in E, e \ne e', e \cap e' \ne \emptyset \}$.

More generally, for any positive integer $\ell$, we define the {\em level-$\ell$ line graph} of the hypergraph $H=(V,E)$, denoted $\L_{\ell}(H)$, as the graph with vertex set $E$ and edge set $\{ e e': e, e'\in E, e \ne e', |e \cap e'| \ge \ell \}$.
}
\end{defn}

\section{Connection in Hypergraphs}

\subsection{Walks, trails, paths, cycles}

In this section, we would like to systematically generalize the standard graph-theoretic notions of walks, trails, paths, and cycles to hypergraphs. In this context, we need to distinguish between distinct parallel edges, hence the original definition of a hypergraph that includes the incidence function will be used.

\begin{defn}{\rm
Let $H=(V,E)$ be a hypergraph with incidence function $\psi$, let $u,v \in V$, and let $k \ge 0$ be an integer. A {\em $(u,v)$-walk of length $k$}  in $H$ is a sequence $v_0 e_1 v_1 e_2 v_2 \ldots v_{k-1} e_k v_k$ of vertices and edges (possibly repeated) such that $v_0,v_1,\ldots,v_k \in V$, $e_1,\ldots,e_k \in E$, $v_0=u$, $v_k=v$, and for all $i=1,2,\ldots, k$, the vertices $v_{i-1}$ and $v_i$ are adjacent in $H$ via the edge $e_i$.

If $W=v_0 e_1 v_1 e_2 v_2 \ldots v_{k-1} e_k v_k$ is a walk in $H$, then vertices  $v_0$ and $v_k$ are called the {\em endpoints} of $W$, and $v_1,\ldots,v_{k-1}$ are the {\em internal vertices} of $W$.

Furthermore, vertices $v_0,v_1,\ldots,v_k$ are called the {\em anchors} of $W$, and any vertex $u \in e_i$, for some $i \in \{ 1,2,\ldots, k\}$, that is not an anchor of $W$ is called a {\em floater} of $W$. We write $V_a(W)$, $V_f(W)$, and $E(W)$ to denote the sets of anchors, floaters, and edges of a walk $W$.
}
\end{defn}

Observe that since adjacent vertices are by definition distinct, no two consecutive vertices in a walk are the same. Note that the edge set $E(W)$ of a walk $W$ may contain distinct parallel edges.

Recall that a trail in a graph is a walk with no repeated edges. For a walk in a graph, having no repeated edges is necessary and sufficient for having no repeated flags; in a hypergraph, only sufficiency holds. This observation suggests two possible ways to define a trail.

\begin{defn}{\rm
Let $W=v_0 e_1 v_1 e_2 v_2 \ldots v_{k-1} e_k v_k$ be a walk in a hypergraph $H=(V,E)$ with incidence function $\psi$.
\begin{enumerate}
\item If the anchor flags $(v_0,e_1),(v_1, e_1),(v_1,e_2),\ldots,(v_{k-1},e_k),(v_k,e_k)$ are pairwise distinct, then $W$ is called a {\em trail}.
\item If the edges $e_1,\ldots,e_k$ are pairwise distinct, then $W$ is called a {\em strict trail}.
\item If the anchor flags $(v_0,e_1),(v_1, e_1),(v_1,e_2),\ldots,(v_{k-1},e_k),(v_k,e_k)$ and the vertices  $v_0,v_1,$ $\ldots,v_k$ are pairwise distinct (but the edges need not be), then $W$ is called a {\em pseudo path}.
\item If both the vertices  $v_0,v_1,\ldots,v_k$ and the edges $e_1,\ldots,e_k$ are pairwise distinct, then $W$ is called a {\em path}.
\end{enumerate}
}
\end{defn}
We emphasize that in the above definitions, ``distinct'' should be understood in the strict sense; that is, parallel edges need not be distinct.

We extend the above definitions to closed walks in the usual way.
\begin{defn}{\rm
Let $W=v_0 e_1 v_1 e_2 v_2 \ldots v_{k-1} e_k v_k$ be a walk  in a hypergraph $H=(V,E)$ with incidence function $\psi$. If  $k \ge 2$ and $v_0=v_k$, then $W$ is called a {\em closed walk}. Moreover:
\begin{enumerate}
\item If $W$ is a trail (strict trail), then it is called a {\em closed trail} ({\em closed strict trail}, respectively).
\item  If $W$ is a closed trail and the vertices  $v_0,v_1,\ldots,v_{k-1}$ are pairwise distinct (but the edges need not be), then $W$ is called a {\em pseudo cycle}.
\item If  the vertices  $v_0,v_1,\ldots,v_{k-1}$ and the edges $e_1,\ldots,e_k$ are pairwise distinct, then $W$ is called a {\em cycle}.
\end{enumerate}
}
\end{defn}
From the above definitions, the following observations are immediate.
\begin{lemma}
Let $W$ be a walk in a hypergraph $H$. Then:
\begin{enumerate}
\item If $W$ is a trail, then no two consecutive edges in $W$ are the same (including the last and the first edge if $W$ is a closed trail).
\item If $W$ is a (closed) strict trail, then it is a (closed) trail.
\item If $W$ is a pseudo path (pseudo cycle), then it is a trail (closed trail, respectively), but not necessarily a strict trail (closed strict trail, respectively).
\item If $W$ is a path (cycle), then it is both a pseudo path (pseudo cycle, respectively) and a strict trail (closed strict trail, respectively).
\end{enumerate}
\end{lemma}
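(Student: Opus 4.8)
The plan is to reduce all four statements to a single structural observation about anchor flags, after which each claim reads off almost directly from the definitions. The observation I would establish first is this: for each index $i \in \{1,\ldots,k\}$, the edge $e_i$ of the walk $W = v_0 e_1 v_1 \ldots e_k v_k$ contributes exactly two anchor flags, namely $(v_{i-1},e_i)$ and $(v_i,e_i)$, and these two are always distinct, because $v_{i-1}$ and $v_i$ are adjacent and hence distinct by definition. Consequently, two anchor flags can coincide only if they are attached to edges bearing the same label; flags attached to distinct edges differ in their second coordinate, and the two flags attached to one fixed edge differ in their first coordinate.

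With this in hand, statement (1) is a one-line contradiction: if $W$ is a trail but $e_i = e_{i+1}$ for some $i$, then the anchor flags $(v_i,e_i)$ and $(v_i,e_{i+1})$ coincide, violating the trail condition; in the closed case, $e_k = e_1$ together with $v_k = v_0$ makes $(v_k,e_k)$ equal to $(v_0,e_1)$, again a contradiction. Statement (2) follows because, for a strict trail, the edges $e_1,\ldots,e_k$ are pairwise distinct, so by the observation every pair of anchor flags differs in either the first or the second coordinate; hence all anchor flags are distinct and $W$ is a trail, the closed case being identical since the property of being closed is untouched.

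Statement (3) is immediate in the positive direction, since by definition a pseudo path (pseudo cycle) already demands pairwise distinct anchor flags, which is exactly the trail (closed trail) condition. The ``not necessarily'' clause requires a witness, and here I would exhibit a small hypergraph with edges $f = \{a,b,c,d\}$ and $g = \{b,c\}$ and take the walk $a\,f\,b\,g\,c\,f\,d$. Its anchors $a,b,c,d$ are pairwise distinct and its six anchor flags $(a,f),(b,f),(b,g),(c,g),(c,f),(d,f)$ are pairwise distinct, so it is a pseudo path; yet the edge $f$ is used twice, so it is not a strict trail. Statement (4) then combines the earlier parts: a path has pairwise distinct edges, hence is a strict trail, and by the argument of statement (2) its anchor flags are distinct, so together with its pairwise distinct vertices it is a pseudo path; the cycle assertion is the verbatim closed analogue.

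The proof is in essence a bookkeeping exercise, so rather than a genuine obstacle there are two care points. The first is handling the wrap-around in the closed cases correctly, where one must consistently use $v_k = v_0$ when comparing the flag pair at the ``seam'' $(v_0,e_1)$ against $(v_k,e_k)$. The second, and the only piece with real content, is verifying that the counterexample for statement (3) truly has pairwise distinct anchor flags despite reusing the edge $f$; this hinges precisely on $f$ being traversed through two disjoint pairs of distinct vertices, which is what keeps the four flags attached to $f$ from colliding.
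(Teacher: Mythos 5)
Your proof is correct in its essentials, and it is worth noting that the paper offers no proof at all for this lemma: it is stated immediately after the sentence ``From the above definitions, the following observations are immediate,'' so your anchor-flag bookkeeping is precisely the unpacking of the definitions that the authors left to the reader. Your key observation --- that each edge $e_i$ contributes exactly the two flags $(v_{i-1},e_i)$ and $(v_i,e_i)$, which are distinct because consecutive vertices of a walk are adjacent and hence distinct, while flags attached to genuinely distinct edges differ in the second coordinate --- cleanly yields statements (1), (2), and (4), including the wrap-around cases, and your witness $a\,f\,b\,g\,c\,f\,d$ with $f=\{a,b,c,d\}$, $g=\{b,c\}$ is a valid pseudo path that is not a strict trail.

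The one point you should patch: statement (3) in its parenthetical reading also asserts that a pseudo cycle need not be a closed strict trail, and for that existential claim you give no witness; your example is an open walk, and it cannot be closed off for free (e.g.\ $a\,f\,b\,g\,c\,f\,a$ fails, since it repeats the flag $(a,f)$ and so is not even a closed trail, let alone a pseudo cycle). The fix is one line: add an edge $h=\{d,a\}$ and take the closed walk $a\,f\,b\,g\,c\,f\,d\,h\,a$; its vertices $a,b,c,d$ are pairwise distinct and its eight anchor flags $(a,f),(b,f),(b,g),(c,g),(c,f),(d,f),(d,h),(a,h)$ are pairwise distinct, so it is a pseudo cycle, yet $f$ is traversed twice, so it is not a closed strict trail. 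With that addition the proof is complete.
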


In a graph, a path or cycle can be identified with the corresponding subgraph (also called path or cycle, respectively). This is not the case in hypergraphs. First, we note that there are (at least) two ways to define a subhypergraph associated with a path or cycle. We define these more generally for walks.

\begin{defn}\label{def:HH'}{\rm
Let $W$ be a walk in a hypergraph $H=(V,E)$. Define the hypersubgraph  $\H(W)$ and a subhypergraph $\H'(W)$ of $H$ associated with the walk $W$ as follows:
$$\H(W)=(V_a(W)\cup V_f(W), E(W))$$
and
$$\H'(W)=(V_a(W), \mset{ e \cap V_a(W): e \in E(W)}).$$
That is, $\H'(W)$ is the subhypergraph  of $\H(W)$ induced by the set of anchor vertices $V_a(W)$.
}
\end{defn}

Second, we observe that, even when $W$ is a path or a cycle, not much can be said about the degrees of the vertices in the associated subhypergraphs $\H(W)$ and $\H'(W)$. Thus, unlike in graphs, we can not use a path (cycle) $W$ (as a sequence of vertices and edges) and its associated subhypergraphs $\H(W)$ and $\H'(W)$ interchangeably.

The following lemma will justify the terminology introduced in this section.

\begin{lemma}\label{lem:W-W_G}
Let $H=(V,E)$  be a hypergraph and $G=\G(H)$ its incidence graph.  Let $v_i \in V$ for $i=0,1,\ldots,k$, and $e_i \in E$ for $i=1,\ldots,k$, and let $W=v_0 e_1 v_1 e_2 v_2 \ldots v_{k-1} e_k v_k$ be an alternating sequence of vertices and edges of $H$. Denote the corresponding sequence of vertices in $G$ by $W_G$. Then the following hold:
\begin{enumerate}
\item $W$ is a (closed) walk in $H$ if and only if $W_G$ is a (closed) walk in $G$ with no two consecutive v-vertices the same.

\item $W$ is a trail (path, cycle) in $H$ if and only if $W_G$ is a trail (path, cycle, respectively) in $G$.

\item $W$ is a strict trail in $H$ if and only if $W_G$ is a trail in $G$ that visits every $e \in E$ at most once.

\item $W$ is a pseudo path (pseudo cycle)  in $H$ if and only if $W_G$ is a trail (closed trail, respectively) in $G$ that visits every $v \in V$ at most once.
\end{enumerate}
\end{lemma}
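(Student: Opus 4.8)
The plan is to exploit the single structural correspondence that underlies all four parts: the edges of $G=\G(H)$ are identified with the flags of $H$ via $E_G=\{ve:(v,e)\in F(H)\}$. Writing $W_G=v_0 e_1 v_1 \ldots e_k v_k$ as a sequence of vertices of $G$, its $2k$ consecutive pairs are $v_{i-1}e_i$ and $e_i v_i$ for $i=1,\ldots,k$, and these correspond precisely, and in order, to the anchor flags $(v_0,e_1),(v_1,e_1),\ldots,(v_{k-1},e_k),(v_k,e_k)$ of $W$. Thus an equality between two of the objects on the $H$-side (anchor flags) translates verbatim into an equality between the corresponding pair-edges on the $G$-side, and conversely. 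I would record this dictionary once at the start and then read off each of the four equivalences from it.

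For part 1, I would unpack the definition of a walk. The sequence $W$ is a walk in $H$ exactly when, for each $i$, the vertices $v_{i-1}$ and $v_i$ are adjacent via $e_i$; by definition this means both $v_{i-1},v_i\in e_i$ and $v_{i-1}\ne v_i$. The membership conditions $v_{i-1},v_i\in e_i$ say exactly that the pairs $v_{i-1}e_i$ and $e_i v_i$ are edges of $G$, i.e.\ that $W_G$ is a walk; the inequalities $v_{i-1}\ne v_i$ are precisely the condition that no two consecutive v-vertices of $W_G$ coincide. The closed case then adds only $v_0=v_k$ on both sides; I would check separately that the hypergraph requirement $k\ge 2$ is matched, noting that $k=1$ with $v_0=v_1$ is already excluded by adjacency and also fails the side condition in $G$.

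Parts 2--4 follow by tracking which distinctness conditions are imposed, using the dictionary. A trail of $H$ is a walk whose anchor flags are pairwise distinct, which by the dictionary means $W_G$ is a walk whose edges are pairwise distinct, i.e.\ a trail of $G$; here the side condition from part 1 is automatic, since a repeated consecutive v-vertex $v_{i-1}=v_i$ would force the two edges $v_{i-1}e_i, e_i v_i$ to coincide and so could not occur in a trail. For paths and cycles I would note that the vertex set of $W_G$ consists of the v-vertices $v_0,\ldots,v_k$ and the e-vertices $e_1,\ldots,e_k$, which (since $V\cap E=\emptyset$) are all pairwise distinct exactly when the $v_i$ are distinct and the $e_i$ are distinct --- the defining condition of a path, and, with $v_0,\ldots,v_{k-1}$ in the closed case, of a cycle. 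Part 3 records that the e-vertices being visited at most once is the condition that $e_1,\ldots,e_k$ are pairwise distinct, and part 4 records that the v-vertices being visited at most once is that $v_0,\ldots,v_k$ (respectively $v_0,\ldots,v_{k-1}$) are pairwise distinct; combining each with trail-ness (i.e.\ distinct anchor flags) yields the strict trail and pseudo path (pseudo cycle) statements, respectively.

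I expect the only genuinely delicate point to be the bookkeeping at the two distinctness ``seams'': verifying that a graph trail automatically forbids equal consecutive v-vertices (so that no extra hypothesis is needed in part 2, in contrast to part 1), and handling the closed case, where one must check that the wrap-around pair $e_k v_k=e_k v_0$ and the opening pair $v_0 e_1$ are treated consistently on both sides and that the boundary values of $k$ giving degenerate closed walks are excluded identically in $H$ and in $G$. Everything else is a direct reading of the dictionary.
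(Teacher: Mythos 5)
Your proposal is correct and takes essentially the same approach as the paper: both proofs establish the dictionary that anchor vertices, edges, and anchor flags of $W$ correspond respectively to v-vertices, e-vertices, and edges of $W_G$, and then read each of the four equivalences directly off this correspondence. Even your ``delicate point'' --- that a trail in $G$ automatically rules out equal consecutive v-vertices, so no side condition is needed in part 2 --- is exactly the observation the paper uses in its converse direction there.
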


\begin{proof}
\begin{enumerate}
\item If $W$ is a walk in $H$, then any two consecutive elements of the sequence $W$ are incident in $H$, and hence the corresponding vertices are adjacent in $G$. Thus $W_G$ is a walk in $G$. Moreover, no two consecutive vertices in $W$ are the same, whence not two consecutive v-vertices in $W_G$ are the same. The converse is shown similarly. Clearly $W$ is closed if and only if $W_G$ is.

    Observe that the anchor vertices and the edges of $W$ correspond to the v-vertices and e-vertices of $W_G$, respectively, and the anchor flags of $W$ correspond to the edges of $W_G$.

\item If $W$ is a trail in $H$, then $W$ is a walk with no repeated anchor flags;  hence $W_G$ is a walk in $G$ with no repeated edges, that is, a trail.
    Conversely, if $W_G$ is a trail in $G$, then it is a walk with no repeated edges, and hence no two identical consecutive v-vertices. It follows that $W$ is a walk in $H$ with no repeated anchor flags, that is, a trail.

    Similarly, if $W$ is a path (cycle)  in $H$, then $W$ is a walk with no repeated edges and no repeated vertices (except the endpoints for a cycle). Hence $W_G$ is a walk in $G$ with no repeated vertices (except the endpoints for a cycle), that is, a path (cycle, respectively). The converse is shown similarly.

\item If $W$ is a strict trail in $H$, then it is a trail with no repeated edges. Hence $W_G$ is a trail in $G$ with no repeated e-vertices. The converse is shown similarly.

\item If $W$ is a pseudo path (pseudo cycle) in $H$, then it is a trail with no repeated vertices (except the endpoints for a pseudo cycle). Hence $W_G$ is a trail in $G$ with no repeated v-vertices (except the endpoints for a pseudo cycle). The converse is similar.
\end{enumerate}
\vspace{-1.6cm}
\end{proof}

The next observations are easy to see, hence the proof is omitted.

\begin{lemma}\label{lem:W^T}
Let $H=(V,E)$  be a non-empty hypergraph and $H^T=(E^T,V^T)$ its dual.  Let $v_i \in V$ for $i=0,1,\ldots,k-1$, and $e_i \in E$ for $i=0,1,\ldots,k-1$, and let $W=v_0 e_0 v_1 e_1 v_2 \ldots v_{k-1} e_{k-1} v_0$ be a closed walk in $H$. Denote $W^T=e_0 v_1^T e_1 v_2^T \ldots v_{k-1}^T e_{k-1} v_0^T e_0$, where for each vertex $v_i$  of $H$, the symbol $v_i^T$ denotes the corresponding edge  in $H^T$. Then the following hold:
\begin{enumerate}
\item If $e_i \ne e_{i+1}$ for all $i \in \ZZ_k$, then $W^T$ is a closed walk in $H^T$.
\item If $W$ is a closed trail (cycle) in $H$, then $W^T$ is a closed trail (cycle, respectively) in $H^T$.
\item If $W$ is a strict closed trail in $H$, then $W^T$ is a pseudo cycle in $H^T$.
\item If $W$ is a pseudo cycle in $H$, then $W^T$ is a strict closed trail in $H^T$.
\end{enumerate}
\end{lemma}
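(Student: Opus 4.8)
The plan is to route everything through the incidence graphs, invoking Lemma~\ref{lem:isom} and Lemma~\ref{lem:W-W_G} rather than checking the four claims directly against the definitions. I would write $G=\G(H)$ and $G^T=\G(H^T)$, and let $\phi\colon V\cup E\to E^T\cup V^T$ be the isomorphism of Lemma~\ref{lem:isom}, so that $\phi(e)=e$ and $\phi(v)=v^T$. Letting $W_G$ be the vertex sequence of $G$ corresponding to $W$, the preliminary observation I would record is that applying $\phi$ termwise gives
$$\phi(W_G)=v_0^T\,e_0\,v_1^T\,e_1\,\ldots\,v_{k-1}^T\,e_{k-1}\,v_0^T,$$
which is exactly the sequence $(W^T)_{G^T}$ associated with $W^T$ in $G^T$, read from the cyclically shifted starting point $v_0^T$ in place of $e_0$. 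Since every walk-type condition in play is invariant under cyclic rotation of a closed walk, I may freely identify $\phi(W_G)$ with $(W^T)_{G^T}$.

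With this in hand, each part becomes a matter of transporting one property along the chain $W\leftrightarrow W_G$ (Lemma~\ref{lem:W-W_G} for $H$), $W_G\leftrightarrow\phi(W_G)$ (the isomorphism $\phi$ preserves walks, trails, and cycles, and sends a vertex visited $t$ times to its image visited $t$ times), and $\phi(W_G)=(W^T)_{G^T}\leftrightarrow W^T$ (Lemma~\ref{lem:W-W_G} for $H^T$). For part~1, $W$ being a closed walk makes $W_G$, and hence $(W^T)_{G^T}$, a closed walk in the corresponding incidence graph; the extra hypothesis $e_i\ne e_{i+1}$ for all $i\in\ZZ_k$ is precisely the assertion that no two consecutive v-vertices of $(W^T)_{G^T}$ coincide (the $e_i$ being the v-vertices of $G^T$), which is exactly what Lemma~\ref{lem:W-W_G}(1) applied to $H^T$ needs in order to conclude that $W^T$ is a closed walk. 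For part~2, I would first note that a closed trail, and a fortiori a cycle, satisfies $e_i\ne e_{i+1}$ for all $i\in\ZZ_k$ (by the lemma on trails stated just after the cycle definitions), so that $W^T$ is at least a closed walk; then combining parts (1) and (2) of Lemma~\ref{lem:W-W_G} for both $H$ and $H^T$ with the fact that $\phi$ preserves closed trails and cycles carries ``closed trail'' and ``cycle'' from $W$ over to $W^T$.

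Parts~3 and~4 are where the duality does its real work, and where I expect the only genuine subtlety to lie: under $\phi$ the e-vertices of $G$ (the elements of $E$) become the v-vertices of $G^T$, while the v-vertices of $G$ (the elements of $V$) become the e-vertices of $G^T$. Consequently the ``visits every $e\in E$ at most once'' condition that characterizes a strict trail in $H$ (Lemma~\ref{lem:W-W_G}(3)) is converted into the ``visits every v-vertex at most once'' condition characterizing a pseudo cycle in $H^T$ (Lemma~\ref{lem:W-W_G}(4)), and symmetrically in the other direction. Thus for part~3, if $W$ is a strict closed trail then $W_G$ is a closed trail in $G$ meeting each e-vertex at most once, so $(W^T)_{G^T}$ is a closed trail in $G^T$ meeting each v-vertex at most once, whence $W^T$ is a pseudo cycle in $H^T$; part~4 is the mirror image, beginning from a pseudo cycle in $H$ and landing at a strict closed trail in $H^T$. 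The main obstacle is therefore purely one of bookkeeping---keeping straight which elements act as vertices and which act as edges on each side of the duality, so that the ``strict'' and the ``pseudo'' conditions are interchanged correctly---and once this role-swap under $\phi$ is pinned down, each implication follows at once.
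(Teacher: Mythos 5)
Your proof is correct, but it cannot be ``essentially the same'' as the paper's, because the paper gives no proof at all: the lemma is introduced with ``The next observations are easy to see, hence the proof is omitted,'' the intended argument being a direct check against the definitions. In that direct check one observes that consecutive edges $e_i$, $e_{i+1}$ of $W$ share the vertex $v_{i+1}$, so as vertices of $H^T$ they are adjacent via the edge $v_{i+1}^T$ as soon as they are distinct; the flag/edge/vertex distinctness conditions then translate term by term, with ``strict'' and ``pseudo'' trading places because edges of $H$ become vertices of $H^T$ and vice versa. Your route through the incidence graphs is heavier but sound, and it reuses the paper's machinery rather than the raw definitions: Lemma~\ref{lem:isom} supplies the isomorphism $\phi$, Lemma~\ref{lem:W-W_G} converts each walk type in $H$ (respectively $H^T$) into a condition on $W_G$ (respectively $(W^T)_{G^T}$), and the role swap under $\phi$ (e-vertices of $\G(H)$ become v-vertices of $\G(H^T)$, and conversely) makes the interchange in parts 3 and 4 mechanical. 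The one ingredient you must supply beyond the quoted lemmas --- and you do state it explicitly --- is that $\phi(W_G)$ and $(W^T)_{G^T}$ differ only by a cyclic shift of the starting point (from $v_0^T$ to $e_0$), together with the fact that every condition in play (closed walk, closed trail, cycle, visiting each vertex of a prescribed class at most once, with the identified endpoints of a closed walk counted as a single visit) is invariant under such rotation. Two details worth spelling out in a final write-up: Lemma~\ref{lem:W-W_G}(2) is stated for trails, paths and cycles, so the closed-trail case of your part 2 must indeed be assembled from parts (1) and (2) of that lemma, as you indicate; and in part 1 the hypothesis $e_i \ne e_{i+1}$ is precisely the ``no two consecutive v-vertices equal'' clause required by Lemma~\ref{lem:W-W_G}(1) applied to $H^T$, since adjacency in this paper requires distinct vertices --- you identify and use it correctly, which is the only place a blind attempt could plausibly have gone wrong.
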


To complete this section, we define concatenation of walks in the usual way.

\begin{defn}{\rm
Let $W=v_0 e_1 v_1  \ldots e_k v_k$ and
$W'=v_k e_{k+1} v_{k+1}  \ldots e_{\ell} v_{\ell}$, for $0 \le k \le \ell$, be two walks in a hypergraph $H=(V,E)$.  The {\em concatenation} of $W$ and $W'$ is the walk $WW'=v_0 e_1 v_1 \ldots e_k v_k e_{k+1} v_{k+1}  \ldots e_{\ell} v_{\ell}$.
}
\end{defn}

\subsection{Connected hypergraphs}

Connected hypergraphs are defined analogously to connected graphs, using existence of walks (or equivalently, existence of paths) between every pair of vertices. The main result of this section is the observation that a hypergraph (without empty edges) is connected if and only if its incidence graph is connected. The reader will observe that existence of empty edges in a hypergraph does not affect its connectivity; however, it does affect the connectivity of the incidence graph.

\begin{defn}{\rm
Let  $H=(V,E)$ be a hypergraph. Vertices $u,v \in V$ are said to be {\em connected} in $H$ if there exists a $(u,v)$-walk in $H$. The hypergraph $H$ is said to be {\em connected} if every pair of distinct vertices are connected in $H$.
}
\end{defn}

\begin{lemma}
Let  $H=(V,E)$ be a hypergraph, and $u,v \in V$. There exists a $(u,v)$-walk in $H$ if and only if there exists a $(u,v)$-path.
\end{lemma}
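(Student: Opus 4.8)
The plan is to prove the nontrivial direction—that a $(u,v)$-walk implies a $(u,v)$-path—since the reverse direction is immediate because every path is by definition a walk. The cleanest approach exploits the machinery already established for the incidence graph. By Lemma~\ref{lem:W-W_G}, a sequence $W$ in $H$ is a $(u,v)$-walk if and only if the corresponding sequence $W_G$ is a $(u,v)$-walk in $G=\G(H)$ with no two consecutive v-vertices the same, and $W$ is a path if and only if $W_G$ is a path. Thus I can transport the entire problem into the graph $G$, where the analogous result (a walk between two vertices implies a path between them) is standard and may be assumed from \cite{BonMur}.

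First I would take a $(u,v)$-walk $W$ in $H$ and pass to the corresponding $(u,v)$-walk $W_G$ in $G$. Since $u$ and $v$ are v-vertices, $W_G$ is a walk between two v-vertices in the bipartite graph $G$. Applying the standard graph result, there is a $(u,v)$-path $P_G$ in $G$. The key observation is that because $G$ is bipartite with bipartition $\{V,E\}$, any path in $G$ automatically alternates between v-vertices and e-vertices; since both endpoints $u,v$ are v-vertices, $P_G$ has the form $u=x_0 x_1 \ldots x_{2\ell}=v$ with the even-indexed $x_{2i}$ being v-vertices and the odd-indexed $x_{2i+1}$ being e-vertices. Hence $P_G$ is precisely of the shape $W_G'$ arising from some alternating sequence $W'=v_0 e_1 v_1 \ldots e_\ell v_\ell$ in $H$.

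Next I would invoke the converse part of Lemma~\ref{lem:W-W_G}(2): since $P_G=W_G'$ is a path in $G$, the corresponding sequence $W'$ is a path in $H$, with the same endpoints $u$ and $v$. This completes the argument. The main obstacle—really the only point requiring care—is the bipartiteness bookkeeping: one must check that the graph-theoretic path $P_G$ genuinely alternates correctly so that it descends to a legitimate alternating vertex-edge sequence in $H$ (in particular that consecutive v-vertices cannot coincide, which is automatic since distinct vertices on a path are distinct). An alternative, self-contained route avoiding the incidence graph would be a direct shortcut argument: take a $(u,v)$-walk $W$ of minimum length among all $(u,v)$-walks, and argue it must be a path, since any repeated vertex or repeated edge would allow deletion of an intermediate segment to produce a strictly shorter $(u,v)$-walk, contradicting minimality. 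I would likely present the incidence-graph proof as the primary one for brevity, since it reuses the already-proven Lemma~\ref{lem:W-W_G} and reduces to a familiar graph fact.
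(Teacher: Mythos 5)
Your proof is correct and follows essentially the same route as the paper: pass to the incidence graph via Lemma~\ref{lem:W-W_G}, invoke the classical walk-implies-path result for graphs, and translate the resulting path back to $H$ using the converse direction of the same lemma. The extra bipartiteness bookkeeping you supply (and the alternative minimum-length-walk argument you sketch) are fine but not needed beyond what the paper's own two-line proof already records.
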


\begin{proof}
Suppose $H$ has a $(u,v)$-walk. By Lemma~\ref{lem:W-W_G}, it corresponds to a $(u,v)$-walk in the incidence graph $\G(H)$, and by a classical result in graph theory, existence of a $(u,v)$-walk in a graph guarantees existence of a $(u,v)$-path. Finally, by Lemma~\ref{lem:W-W_G}, a $(u,v)$-path  in $\G(H)$ (since $u,v
\in V$) corresponds to a $(u,v)$-path in $H$.

The converse obviously holds by definition.
\end{proof}

It is clear that vertex connection in a hypergraph $H=(V,E)$ is an equivalence relation on the set $V$. Hence the following definition makes sense.

\begin{defn}{\rm
Let  $H=(V,E)$ be a hypergraph, and let $V' \subseteq V$ be an equivalence class with respect to vertex connection. The hypersubgraph of $H$ induced by $V'$ is called a {\em connected component} of $H$. We denote the number of connected components of $H$ by $\omega(H)$.
}
\end{defn}

Observe that, by the definition of a vertex-subset-induced hypersubgraph, the connected components of a hypergraph have no empty edges. Alternatively, the connected components of $H$ can be defined as the maximal connected hypersubgraphs of $H$ that have no empty edges. It is easy to see that for a hypergraph $H=(V,E)$ with the multiset of empty edges denoted $E_0$, the hypersubgraph $H-E_0$ decomposes into the connected components of $H$.

\begin{theo}\label{the:conn}
Let $H=(V,E)$ be a hypergraph  without empty edges. Then $H$ is connected if and only if its incidence graph $G=\G(H)$ is connected.
\end{theo}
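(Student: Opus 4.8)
The plan is to reduce both implications to the walk correspondence established in Lemma~\ref{lem:W-W_G}(1), which asserts that an alternating sequence $W=v_0 e_1 v_1 \ldots e_k v_k$ is a walk in $H$ exactly when the corresponding vertex sequence $W_G$ is a walk in $G$ with no two consecutive v-vertices equal. Since connectedness is governed entirely by the existence of walks between pairs of vertices, this correspondence does almost all of the work; the only extra ingredients are the bipartite structure of $G$ and the hypothesis that $H$ has no empty edges.

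For the forward direction, I would assume $H$ is connected and show that every vertex of $G$, that is, every element of $V \cup E$, lies in a single component. First I would establish that all v-vertices lie in one component: given distinct $u,v \in V$, connectedness of $H$ provides a $(u,v)$-walk $W$ in $H$, and Lemma~\ref{lem:W-W_G}(1) converts it into a $(u,v)$-walk $W_G$ in $G$. It then remains to attach the e-vertices, and this is precisely where the no-empty-edges hypothesis enters: each $e \in E$ contains some $w \in e$, so $we \in E_G$ and $e$ is adjacent in $G$ to the v-vertex $w$; hence $e$ lies in the same (unique) component as all the v-vertices. This shows $G$ is connected.

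For the converse, I would assume $G$ is connected and take distinct $u,v \in V$. Connectedness of $G$ yields a $(u,v)$-walk $W_G$. The key observation is that, since $G$ is bipartite with bipartition $\{V,E\}$, a walk can never traverse two vertices of the same part consecutively; in particular $W_G$ has no two consecutive v-vertices at all, so the side condition of Lemma~\ref{lem:W-W_G}(1) is automatically satisfied, and moreover $W_G$ alternates in the form $v_0 e_1 v_1 \ldots e_k v_k$ between v- and e-vertices (beginning and ending at v-vertices because $u,v \in V$). Applying Lemma~\ref{lem:W-W_G}(1) in the reverse direction produces a $(u,v)$-walk in $H$, so $u$ and $v$ are connected in $H$; as the pair was arbitrary, $H$ is connected.

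I do not anticipate a serious obstacle, since the substance is packaged inside Lemma~\ref{lem:W-W_G}. The two points that require care are, first, making explicit why the no-empty-edges assumption is exactly what is needed: an empty edge would appear in $G$ as an isolated e-vertex, which could disconnect $G$ while $H$ remains connected, so it must be excluded for the forward implication (this also matches the remark preceding the theorem). Second, I would dispose of the degenerate cases; in particular, when $E=\emptyset$ the forward argument has no e-vertices to attach and connectedness of $H$ forces $|V|=1$, so $G$ is a single vertex and is trivially connected, ensuring the statement holds literally in all cases.
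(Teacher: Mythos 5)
Your forward direction is correct, and in fact slightly tidier than the paper's (the paper builds an $(x,y)$-walk in $G$ by cases according to whether $x$ and $y$ are v-vertices or e-vertices, whereas you put all v-vertices into one component and then attach each e-vertex by a single incidence, which isolates exactly where the no-empty-edges hypothesis is used). The gap is in your converse, and it stems from a misreading of the side condition in Lemma~\ref{lem:W-W_G}(1). The condition ``no two consecutive v-vertices the same'' does not mean that no two adjacent positions of the sequence $W_G$ are both v-vertices (that is indeed automatic from bipartiteness, as you say); it means that in the alternating sequence $v_0 e_1 v_1 \ldots e_k v_k$, the v-vertices $v_{i-1}$ and $v_i$ flanking a common e-vertex must be distinct. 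This reflects the definition of a walk in $H$: $v_{i-1}$ and $v_i$ are required to be \emph{adjacent} via $e_i$, and adjacency in a hypergraph is defined only for distinct vertices. An arbitrary $(u,v)$-walk in $G$ need not satisfy this condition, because it can backtrack: for instance $W_G = u\, e\, u\, f\, v$ with $u \in e$ and $u,v \in f$ is a legitimate walk in the bipartite graph $G$, yet the corresponding sequence $u\, e\, u\, f\, v$ is not a walk in $H$. (Note that under your vacuous reading, the ``if and only if'' of Lemma~\ref{lem:W-W_G}(1) would be false for exactly this example, so that reading cannot be the intended one.) Hence applying the lemma ``in the reverse direction'' to an arbitrary walk in $G$ is not justified.

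The repair is immediate and is precisely what the paper does: since $G$ is connected, take a $(u,v)$-\emph{path} in $G$ rather than an arbitrary walk (or extract a path from your walk, which is a standard graph-theoretic fact). A path repeats no vertices at all, so in particular $v_{i-1} \ne v_i$ holds, and Lemma~\ref{lem:W-W_G} — part (1), or part (2) directly, which states that paths in $H$ correspond to paths in $G$ — then yields a $(u,v)$-path, hence a walk, in $H$. With that single change your argument is complete and coincides with the paper's proof.
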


\begin{proof}
Assume $H$ is connected. Take any two vertices $x,y$ of $G$. If $x$ and $y$ are both v-vertices, then there exists an $(x,y)$-walk in $H$, and hence, by Lemma~\ref{lem:W-W_G}, an $(x,y)$-walk in $G$. If $x$ is an e-vertex and $y$ is a v-vertex in $G$ , then $x$ is a non-empty edge in $H$. Choose any $v \in x$. Since $H$ is connected, it possesses a $(v,y)$-walk $W$. Then $xW$ is an $(x,y)$-walk in $G$. The remaining case $x,y \in E$ is handled similarly. We conclude that $G$ is connected.

Assume $G$ is connected. Take any two vertices $u,v$ of $H$. Then there exists $(u,v)$-path in $G$, and hence by Lemma~\ref{lem:W-W_G}, a $(u,v)$-path in $H$. Therefore $H$ is connected.
\end{proof}

\begin{cor}\label{cor:conn}
Let $H$ be a hypergraph and  $G=\G(H)$ its incidence graph. Then:
\begin{enumerate}
\item If $H'$ is a connected component of $H$, then $\G(H')$ is a connected component of $G$.
\item If $G'$ is a connected component of $G$ with at least one v-vertex, then there exists a connected component $H'$ of $H$ such that $G'=\G(H')$.
\item If $H$ has no empty edges, then there is a one-to-one correspondence between connected components of $H$ and connected components of its incidence graph.
\end{enumerate}
\end{cor}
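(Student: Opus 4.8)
The plan is to build the whole correspondence at the level of vertex sets, exploiting the fact that a connected component of a graph is precisely the subgraph induced by an equivalence class of its vertex-connection relation, together with Theorem~\ref{the:conn} and the walk-translation Lemma~\ref{lem:W-W_G}.

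For the first statement, I would let $H'=H[V']$ be a connected component of $H$, so that $V'$ is a vertex-connection class and $H'$ has no empty edges. By Theorem~\ref{the:conn}, $\G(H')$ is connected, and by Lemma~\ref{lem:sub-inc}(1) it is the subgraph of $G$ induced by $V'\cup E'$, where $E'$ is the edge set of $H'$. It then remains to prove maximality: any vertex $x$ of $G$ connected (in $G$) to $V'\cup E'$ already lies in $V'\cup E'$. If $x$ is a v-vertex, an $(x,v)$-path to some $v\in V'$ translates via Lemma~\ref{lem:W-W_G} into an $(x,v)$-walk in $H$, forcing $x\in V'$. If $x$ is an e-vertex in the same component, it cannot be an empty edge (empty edges are isolated in $G$, while $V'\ne\emptyset$); picking $w\in x$ puts $w\in V'$ by the previous case, and then every vertex of $x$ is connected to $w$ in $H$ via $x$, so $x\subseteq V'$ and $x\in E'$. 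Hence $V'\cup E'$ is a full component of $G$, and the induced subgraph on it, namely $\G(H')$, is a connected component of $G$.

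For the second statement, given a component $G'$ of $G$ with a v-vertex, I would set $V'=V(G')\cap V$ and $E'=V(G')\cap E$. Lemma~\ref{lem:W-W_G} shows $V'$ is a vertex-connection class of $H$ (mutual connection comes from paths inside the connected $G'$; maximality from translating an $H$-walk back into $G$), so $H'=H[V']$ is a component of $H$. The key verification is that the edge set of $H[V']$, namely $\mset{e\in E: e\subseteq V',\, e\ne\emptyset}$, coincides with $E'$: an edge of $H'$ meets $V'$ and so is a $G$-neighbour of a vertex of $G'$, hence lies in $E'$; conversely an e-vertex of $G'$ is nonempty (again because it is not isolated) and all of its vertices lie in $V'$, so it belongs to $H[V']$. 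Both $G'$ and $\G(H')$ are then the induced subgraph of $G$ on $V'\cup E'$, so they are equal.

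Finally, for the one-to-one correspondence I would show that the map $H'\mapsto \G(H')$ of the first statement is a bijection. Injectivity is immediate, since $V'=V(\G(H'))\cap V$ recovers the class $V'$ and hence $H'$. For surjectivity, the absence of empty edges guarantees that every e-vertex of $G$ has positive degree, so every component of $G$ contains a v-vertex, and the second statement then supplies the preimage. The main obstacle throughout is the bookkeeping of empty edges and the exact match $\mset{e\in E: e\subseteq V',\, e\ne\emptyset}=V(G')\cap E$; once one observes that empty edges are exactly the isolated e-vertices of $G$, the remainder follows by routine translation between $H$-walks and $G$-walks.
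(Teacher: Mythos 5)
Your proof is correct, but it reaches maximality by a different mechanism than the paper. The paper's argument is a ``sandwich'' in both directions: for statement (1) it takes the connected component $G''$ of $G$ containing $\G(H')$, invokes the \emph{converse} part of Lemma~\ref{lem:sub-inc} (the degree criterion $\deg_{G''}(e)=\deg_G(e)$ for e-vertices) to recognize $G''$ as $\G(H'')$ for some hypersubgraph $H''$, transfers connectivity by Theorem~\ref{the:conn}, and then concludes $H''=H'$ from the characterization of components of $H$ as \emph{maximal} connected hypersubgraphs without empty edges; statement (2) is the symmetric sandwich with the roles of $H$ and $G$ exchanged. You instead work entirely at the level of vertex sets: you use only the forward direction of Lemma~\ref{lem:sub-inc} together with the walk-translation Lemma~\ref{lem:W-W_G}, and verify element by element that $V'\cup E'$ is closed under connection in $G$ (statement (1)), respectively that $V(G')\cap V$ is a vertex-connection class of $H$ whose induced edge set $\mset{e\in E: e\subseteq V',\, e\ne\emptyset}$ coincides with $V(G')\cap E$ (statement (2)). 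What each buys: the paper's route is structurally shorter because the degree criterion plus maximality of components does all the identification work in two lines, but it leans on two slightly less elementary facts (the converse of Lemma~\ref{lem:sub-inc} and the alternative characterization of components); your route is more hands-on and self-contained, makes the role of empty edges completely explicit (they are exactly the isolated e-vertices of $G$, which is the one place the hypothesis enters), and gives statement (3) as a genuine bijection with injectivity and surjectivity checked, where the paper simply cites (1) and (2). Both arguments are sound; there is no gap in yours.
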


\begin{proof}
\begin{enumerate}
\item Let $H'$ be a connected component of $H$, and let $G'=\G(H')$. Since $H'$ has no empty edges by definition,  $G'$ is connected by Theorem~\ref{the:conn}. Let $G''$ be the connected component of $G$ containing $G'$ as a subgraph. Then $G''$ contains v-vertices and  $\deg_{G''}(e)=\deg_{G}(e)$ for all e-vertices $e$ of $G''$, and so by Lemma~\ref{lem:sub-inc},  $G''=\G(H'')$ for some hypersubgraph $H''$ of $H$. Since $G''$ is connected and the incidence graph of a hypergraph, it has no isolated e-vertices. Hence $H''$ has no empty edges, and so by Theorem~\ref{the:conn}, $H''$ is connected since $G''$ is. Now $H'$ is a maximal connected hypersubgraph of $H$ without empty edges, and a hypersubgraph of a connected hypersubgraph $H''$ without empty edges; it must be that $H''=H'$. Consequently, $G'=G''$ and so $G'$ is indeed a connected component of $G$.
\item Let $G'$ be a connected component of $G$ with at least one v-vertex. Then $\deg_{G'}(e)=\deg_{G}(e)$ for all e-vertices $e$ of $G'$, and so by Lemma~\ref{lem:sub-inc},  $G'=\G(H')$ for some hypersubgraph $H'$ of $H$. Since $G'$ is connected and the incidence graph of a hypergraph, it has no isolated e-vertices; hence $H'$ has no empty edges. Thus, by Theorem~\ref{the:conn}, $H'$ is connected since $G'$ is. Let $H''$ be the connected component of $H$ containing $H'$, and  $G''=\G(H'')$. Again by Theorem~\ref{the:conn}, $G''$ is connected, and hence $G''=G'$ by the maximality of $G'$. It follows that $H'=H''$, so indeed $G'=\G(H')$, where $H'$ is a connected component of $H$.
\item Since $H$ has no empty edges, every connected component of $G$ has at least one v-vertex. The conclusion now follows directly from the first two statements of the corollary.
\end{enumerate}
\vspace{-1.7cm}
\end{proof}

\begin{cor}\label{cor:omega}
Let $H$ be a hypergraph without empty edges and $G=\G(H)$ its incidence graph. Then:
\begin{enumerate}
\item $\omega(H)=\omega(G)$.
\item If $H$ is non-empty and has no isolated vertices, and $H^T$ is its dual, then $\omega(H)=\omega(H^T)$.
\end{enumerate}
\end{cor}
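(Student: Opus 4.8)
The plan is to reduce both statements to the connection/incidence-graph correspondence already established, so that no fresh combinatorial argument is needed. For the first statement, I would simply invoke Corollary~\ref{cor:conn}(3): since $H$ has no empty edges, that result provides a one-to-one correspondence between the connected components of $H$ and those of $G=\G(H)$. Two sets in bijection have the same cardinality, so $\omega(H)=\omega(G)$, and nothing further is required.

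For the second statement, the strategy is to route the desired equality through the incidence graphs of $H$ and $H^T$. First I would apply the first statement to $H$ itself to obtain $\omega(H)=\omega(\G(H))$. Next, by Lemma~\ref{lem:isom} the graphs $\G(H)$ and $\G(H^T)$ are isomorphic, and isomorphic graphs have the same number of connected components, so $\omega(\G(H))=\omega(\G(H^T))$. Finally I would apply the first statement to $H^T$ to get $\omega(\G(H^T))=\omega(H^T)$, and the three equalities chain together to yield $\omega(H)=\omega(H^T)$.

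The one genuine hypothesis check --- and the only place where the extra assumptions of the second statement are used --- is that the first statement is actually applicable to $H^T$; that is, that $H^T$ has no empty edges. Here I would invoke Lemma~\ref{lem:delT}(2): an empty edge of $H^T$ is precisely $v^T$ for some isolated vertex $v$ of $H$. Since $H$ is assumed to have no isolated vertices, $H^T$ has no empty edges, so the first statement indeed applies; the non-emptiness assumption on $H$ is what guarantees that $H^T$ is defined in the first place. Thus I expect no real obstacle: all the substantive work has been done in the preceding lemmas and corollary, and the proof amounts to a short chain of substitutions together with this single verification that $H^T$ satisfies the ``no empty edges'' hypothesis.
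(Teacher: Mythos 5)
Your proposal is correct and follows essentially the same route as the paper: part (1) via the component correspondence of Corollary~\ref{cor:conn}, and part (2) by chaining $\omega(H)=\omega(\G(H))=\omega(\G(H^T))=\omega(H^T)$ using Lemma~\ref{lem:isom} and the first statement applied to both $H$ and $H^T$. Your explicit appeal to Lemma~\ref{lem:delT}(2) to verify that $H^T$ has no empty edges is a nice touch --- the paper asserts this without citation --- but it is the same argument.
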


\begin{proof}
\begin{enumerate}
\item Since $H$ has no empty edges, by Corollary~\ref{cor:conn} there is a one-to-one correspondence between the connected components of $H$ and $G$. Therefore, $\omega(H)=\omega(G)$.
\item Assume $H$ is non-empty and has no isolated vertices. Then $H^T$ is well defined and has no empty edges, and so $\omega(H^T)=\omega(\G(H^T))$ by the first statement. Since by Lemma~\ref{lem:isom} a hypergraph and its dual have isomorphic incidence graphs, it follows that $\omega(H^T)=\omega(\G(H^T))=\omega(G)=\omega(H).$
\end{enumerate}
\vspace{-10mm}
\end{proof}

\subsection{Cut edges and cut vertices}

In this section, we define cut edges and cut vertices in a hypergraph analogously to those in a graph. The existence of cut edges and cut vertices is one of the first measures of strength of connectivity of a connected (hyper)graph. In hypergraphs, however, we must consider two distinct types of cut edges.

\begin{defn}{\rm
A {\em cut edge} in a hypergraph $H=(V,E)$ is an edge $e \in E$ such that $\omega(H-e) > \omega(H)$.
}
\end{defn}

\begin{lemma}\label{lem:cut-edge}
Let $e$ be a cut edge in a hypergraph  $H=(V,E)$. Then $$\omega(H) < \omega(H-e) \le \omega(H)+|e|-1.$$
\end{lemma}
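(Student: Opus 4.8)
The plan is to notice that the left-hand inequality $\omega(H) < \omega(H-e)$ is precisely the definition of a cut edge, so all the real work goes into establishing the upper bound $\omega(H-e) \le \omega(H) + |e| - 1$.

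For the upper bound, the first step is to localize the effect of deleting $e$. Since any two vertices of $e$ are adjacent via $e$, all vertices of $e$ lie in a single connected component of $H$; call it $C$ and write $V_C = V(C)$. A walk can use $e$ only if it visits a vertex of $e$, and every such vertex lies in $V_C$; since any walk stays within one component, a component $C' \ne C$ contains no vertex of $e$ and is therefore entirely unaffected by the deletion, remaining a component of $H-e$. Consequently $\omega(H-e) = (\omega(H)-1) + t$, where $t$ denotes the number of components of $H-e$ whose vertex sets are contained in $V_C$.

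The heart of the argument, and the step I expect to be the main obstacle, is the bound $t \le |e|$. I claim that in $H-e$ every vertex $w \in V_C$ is connected to some vertex of $e$. To see this, fix $v_1 \in e$; since $w, v_1 \in V_C$, there is a $(w,v_1)$-walk $W$ in $H$. If $W$ avoids $e$, then $w$ is connected to $v_1 \in e$ already in $H-e$. Otherwise, traverse $W$ starting from $w$ and let $x\,e\,y$ be the first step of $W$ that uses $e$; the portion of $W$ from $w$ up to $x$ avoids $e$, and $x \in e$, so $w$ is connected to $x \in e$ in $H-e$. Thus every component of $H-e$ lying inside $V_C$ contains at least one of the $|e|$ vertices of $e$, which forces $t \le |e|$.

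Combining these gives $\omega(H-e) = \omega(H) - 1 + t \le \omega(H) + |e| - 1$, which together with the definitional lower bound proves the lemma. As a sanity check, the bound is tight for ordinary graphs, where $|e| = 2$ forces $\omega(H-e) = \omega(H) + 1$, recovering the usual fact about bridges. I note that one could instead route the proof through the incidence graph, using $\G(H-e) = \G(H)\b e$ from Lemma~\ref{lem:delete} together with the graph fact that deleting a degree-$d$ vertex raises the component count by at most $d-1$; but converting component counts between $H$ and $\G(H)$ via Corollary~\ref{cor:omega} would require separately accounting for empty edges and pendant vertices, so the direct walk argument above seems cleaner.
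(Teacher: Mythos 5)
Your proof is correct and takes essentially the same approach as the paper's: the lower bound is the definition, and for the upper bound the paper likewise counts the components of $H-e$ that meet $e$ (its $k$, your $t$), bounds that count by $|e|$, and uses the identity $\omega(H-e)=\omega(H)+k-1$, which your walk-truncation argument simply justifies in more detail than the paper bothers to. The one point the paper states explicitly that you leave tacit is that a cut edge cannot be empty (needed for your component $C$ to exist at all), an immediate observation since deleting an empty edge does not change vertex connectivity.
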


\begin{proof}
The inequality on the left follows straight from the definiton of a cut edge. To see the inequality on the right, first observe that $e$ is not empty. Let  $H_1,\ldots,H_k$ be the connected components of $H-e$ whose vertex sets intersect $e$. Since $e$ has at least one vertex in common with each $V(H_i)$, we have $|e| \ge k$. Hence $\omega(H-e)= \omega(H)+k-1 \le \omega(H)+|e|-1$.
\end{proof}

\begin{defn}{\rm
A cut edge $e$ of a hypergraph $H$ is called {\em strong} if $\omega(H-e) = \omega(H)+|e|-1$, and {\em weak} otherwise.
}
\end{defn}

Observe that a cut edge has cardinality at least two, and that any cut edge of cardinality two (and hence any cut edge in a simple graph) is necessarily strong.

Recall that an edge of a graph is a cut edge if and only if appears in no cycle. We shall now show that an analogous statement holds for hypergraphs if we replace ``cut edge'' with ``strong cut edge''.

\begin{theo}\label{the:strong-cut}
Let $e$ be an edge in a connected hypergraph  $H=(V,E)$. The following are equivalent:
\begin{enumerate}
\item $e$ is a strong cut edge, that is, $\omega(H-e)=|e|$.
\item $e$ contains exactly one vertex from each connected component of $H-e$.
\item $e$ lies in no cycle of $H$.
\end{enumerate}
\end{theo}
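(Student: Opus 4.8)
The plan is to prove the chain of equivalences by establishing $(1)\Leftrightarrow(2)$ through a counting argument and $(2)\Leftrightarrow(3)$ through a direct path-to-cycle construction. Since a strong cut edge has cardinality at least two, condition (1) already forces $|e|\ge 2$; the degenerate possibility that $e$ is empty (for which (1) and (2) fail while (3) holds vacuously) is tacitly excluded, so I assume $e$ nonempty throughout.

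For $(1)\Leftrightarrow(2)$, I would first record the structural fact underlying Lemma~\ref{lem:cut-edge}: since $H$ is connected, so that $\omega(H)=1$, every connected component of $H-e$ must contain at least one vertex of $e$. Indeed, if some component $C$ were disjoint from $e$, then a walk in $H$ joining a vertex of $C$ to a vertex of $e$ — which exists by connectedness — would have to use $e$, and the first such use would occur at a vertex of $e$ reachable from $C$ without using $e$, hence lying in $C$, a contradiction. Consequently the sets $e\cap V(C)$, as $C$ ranges over the $\omega(H-e)$ components of $H-e$, form a partition of $e$ into $\omega(H-e)$ nonempty blocks, giving $\omega(H-e)\le |e|$ with equality precisely when every block is a singleton. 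This says exactly that $\omega(H-e)=|e|$ (condition (1)) holds if and only if $e$ meets each component of $H-e$ in exactly one vertex (condition (2)).

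For $(2)\Leftrightarrow(3)$, I would argue both contrapositives via paths in $H-e$. If (2) fails, two distinct vertices $u,w\in e$ lie in a common component of $H-e$, so there is a $(u,w)$-path $P$ in $H-e$; appending the edge $e$ to close $P$ up yields a closed walk $u\,\ldots\,w\,e\,u$ whose vertices are distinct (they come from the path $P$) and whose edges are distinct (the edges of $P$ lie in $H-e$, hence differ from $e$), that is, a cycle of length at least $2$ containing $e$, contradicting (3). Conversely, if $e$ lies in a cycle, I would rotate the cycle so that $e$ is its last edge, writing it as $v_0e_1v_1\cdots v_{k-1}e\,v_0$ with $k\ge 2$; then $v_0$ and $v_{k-1}$ are two distinct vertices of $e$, while the initial segment $v_0e_1v_1\cdots e_{k-1}v_{k-1}$ uses only edges different from $e$ and so is a walk in $H-e$ joining them, placing $v_0$ and $v_{k-1}$ in the same component of $H-e$ and violating (2).

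The routine parts are the counting bookkeeping and the verification that the constructed closed walk genuinely meets the distinctness requirements of a cycle, including the length-$\ge 2$ condition, which is guaranteed by $u\ne w$. The main conceptual step — and the one I would take most care over — is the claim that every component of $H-e$ meets $e$, since this is what ties the purely numerical condition (1) to the combinatorial condition (2); it rests on the connectedness of $H$ and on analyzing the first occurrence of $e$ along a connecting walk. As an alternative to the direct construction in $(2)\Leftrightarrow(3)$, one could instead pass to the incidence graph and invoke Lemma~\ref{lem:W-W_G} to transfer cycles between $H$ and $\G(H)$, but the direct path argument seems shortest.
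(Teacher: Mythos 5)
Your proposal is correct and follows essentially the same route as the paper's own proof: the equivalence $(1)\Leftrightarrow(2)$ via the observation that connectedness of $H$ forces every connected component of $H-e$ to contain a vertex of $e$ (so the $|e|$ vertices distribute among the $\omega(H-e)$ components, with equality exactly in the singleton case), and $(2)\Leftrightarrow(3)$ by closing a path in $H-e$ between two vertices of $e$ into a cycle through $e$, and conversely deleting $e$ from a cycle to obtain such a path. The only differences are presentational: you spell out the proof that each component of $H-e$ meets $e$ (which the paper simply asserts) and you flag the degenerate empty-edge case, which the paper leaves implicit.
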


\begin{proof}
(1) $\Rightarrow$ (2): Let $e$ be a strong cut edge of $H$. Since $H$ is connected, the edge $e$ must have at least one vertex in each  connected component of $H-e$. Since there are $|e|$ connected components of $H-e$, the edge $e$ must have exactly one vertex in each of them.

(2) $\Rightarrow$ (1): Assume $e$ contains exactly one vertex from each connected component of $H-e$. Then clearly $\omega(H-e)=|e|$.

(2) $\Rightarrow$ (3): Assume $e$ contains exactly one vertex from each connected component of $H-e$, and suppose $e$ lies in a cycle $C=v_0 e_1 v_1 e_2 v_2 \ldots v_{k-1} e v_0$ of $H$. Then $v_0 e_1 v_1 e_2 v_2 \ldots v_{k-1}$ is a path in $H-e$, and so $v_0$ and $v_{k-1}$ are two vertices of $e$ in the same connected component of $H-e$, a contradiction. Hence $e$ lies in no cycle of $H$.

(3) $\Rightarrow$ (2): Assume $e$ lies in no cycle of $H$. Since $H$ is connected, the edge $e$ must contain at least one vertex from each connected component of $H-e$. Suppose $e$ contains two vertices $u$ and $v$ in the same connected component $H'$ of $H-e$. Then $H'$ contains a $(u,v)$-path $P$, and $Pv e u$ is a cycle in $H$ that contains $e$, a contradiction. Hence $e$ possesses exactly one vertex from each connected component of $H-e$.
\end{proof}

The above theorem can be easily generalized to all (posssibly disconnected) hypergraphs as follows.

\begin{cor}\label{cor:strong-cut}
Let $e$ be an edge in a hypergraph  $H=(V,E)$. The following are equivalent:
\begin{enumerate}
\item $e$ is a strong cut edge, that is, $\omega(H-e) = \omega(H)+|e|-1$.
\item $e$ contains exactly one vertex from each connected component of $H-e$ that it intersects.
\item $e$ lies in no cycle of $H$.
\end{enumerate}
\end{cor}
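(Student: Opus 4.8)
The plan is to reduce the disconnected case to Theorem~\ref{the:strong-cut} by localizing every condition to the single connected component that contains $e$. Throughout I would assume $e$ is nonempty; this is harmless, since an empty edge is never a cut edge (deleting it leaves $\omega$ unchanged), so that case is degenerate. Because any two vertices of the nonempty edge $e$ are adjacent via $e$, all vertices of $e$ lie in one connected component of $H$; let $H_0$ denote that component, so that $e \in E(H_0)$ and $\omega(H_0)=1$.

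First I would establish the additivity of $\omega$ under deletion of $e$. Since deleting $e$ affects no vertex outside $H_0$ and creates no new adjacencies, the $\omega(H)-1$ components of $H$ other than $H_0$ survive, unchanged, as components of $H-e$, while $H_0$ is replaced by the components of $H_0-e$; hence $\omega(H-e)=(\omega(H)-1)+\omega(H_0-e)$. Substituting this into condition~(1) gives $\omega(H-e)=\omega(H)+|e|-1 \iff \omega(H_0-e)=|e|=\omega(H_0)+|e|-1$, which is exactly condition~(1) of Theorem~\ref{the:strong-cut} applied to the connected hypergraph $H_0$.

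Next I would match conditions~(2) and~(3). For~(3), any cycle of $H$ through $e$ uses only vertices and edges connected to $e$, and so lies entirely within $H_0$; thus $e$ lies in no cycle of $H$ if and only if $e$ lies in no cycle of $H_0$, which is condition~(3) for $H_0$. For~(2), every vertex of $e$ lies in $H_0$, so the components of $H-e$ that $e$ intersects are all components of $H_0-e$; conversely, since $H_0$ is connected, every component of $H_0-e$ contains at least one vertex of $e$ (the argument already used in the proof of the theorem). Hence the components of $H-e$ met by $e$ are precisely all the components of $H_0-e$, and condition~(2) for $H$ coincides with condition~(2) of Theorem~\ref{the:strong-cut} for $H_0$.

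With all three conditions for $H$ translated into the corresponding conditions for the connected hypergraph $H_0$, their equivalence follows immediately from Theorem~\ref{the:strong-cut}. The only genuinely delicate point is the verification, in the step for~(2), that $e$ meets \emph{every} component of $H_0-e$: without it, the qualifier ``that it intersects'' in condition~(2) would render the disconnected statement strictly weaker than the connected one. This is exactly where the connectedness of $H_0$ enters, and it is the same fact that drives the $(3)\Rightarrow(2)$ direction of the theorem; I expect this to be the main obstacle, while the $\omega$-bookkeeping of the second paragraph is routine.
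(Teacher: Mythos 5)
Your proof is correct and coincides with the paper's (implicit) argument: the paper states this corollary without proof, as an easy generalization of Theorem~\ref{the:strong-cut}, and your reduction --- additivity of $\omega$ over components, the confinement of any cycle through $e$ to the component $H_0$ containing $e$, and the observation that $e$ meets every component of $H_0-e$ --- is exactly that generalization made explicit. One small caveat: for an empty edge, conditions (2) and (3) hold vacuously while (1) fails, so your restriction to nonempty $e$ is not merely ``harmless'' but necessary, and it reflects the paper's own implicit convention (cf.\ the observation in the proof of Lemma~\ref{lem:cut-edge} that a cut edge cannot be empty).
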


We know that an even graph has no cut edges; in other words, every edge of an even graph (that is, a graph with no odd-degree vertices) lies in a cycle. This statement is false for hypergraphs, as the example below demonstrates. In the following two theorems, however, we present two generalizations to hypergraphs that do hold.

\begin{counter}{\rm
For every even $n\ge 2$, define a hypergraph $H=(V,E)$ as follows. Let $V=\{ v_i: i=1,\ldots,2n \}$ and $E=\{ e_i:   i=1,\ldots,2n \}$, and let $F(H)=\{ (v_i,e_j): i,j=1,\ldots,n \} \cup \{ (v_i,e_j): i,j=n+1,\ldots,2n \} \cup \{ (v_1,e_{n+1}) \} - \{ (v_1,e_1)\}$. Then every vertex in $H$ has degree $n$, which is even, but $e_{n+1}$ is a cut edge in $H$.
}
\end{counter}

\begin{theo}
Let $H=(V,E)$ be a $k$-uniform hypergraph such that $\deg_H(u)\equiv 0 \pmod{k}$ for every vertex $u$ of $H$. Then $H$ has no cut edges.
\end{theo}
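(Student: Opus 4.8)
The plan is to argue by contradiction: suppose some $e \in E$ is a cut edge, and derive a contradiction by counting flags modulo $k$ inside a single connected component of $H - e$. First I would record two easy reductions. Since $H$ is $k$-uniform and, as noted after Lemma~\ref{lem:cut-edge}, every cut edge has cardinality at least two, I may assume $k \ge 2$. The hypothesis $\deg_H(u) \equiv 0 \pmod k$ then forces every vertex to have degree $0$ or at least $k$; in particular, each $u \in e$ has $\deg_H(u) \ge k \ge 2$, so $\deg_{H-e}(u) \ge k-1 \ge 1$ and no vertex of $e$ becomes isolated in $H - e$. Thus every vertex of $e$ lies in a nontrivial component of $H-e$.

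The central observation is that deleting $e$ keeps every surviving edge inside a single component. Let $H' = (V', E')$ be a connected component of $H - e$ and let $u \in V'$. Any edge $f \neq e$ of $H$ with $u \in f$ has all its vertices connected to $u$ via $f$ in $H - e$, so $f \subseteq V'$ and hence $f \in E'$; conversely every edge of $E'$ through $u$ is such an $f$. Therefore
$$\deg_{H'}(u) = \deg_H(u) - [u \in e],$$
where $[u \in e]$ equals $1$ if $u \in e$ and $0$ otherwise. Summing over $u \in V'$ and counting the flags of $H'$ in two ways (the Handshaking Lemma), using $k$-uniformity so that $|f| = k$ for every $f \in E'$, gives
$$k\,|E'| = \sum_{u \in V'} \deg_{H'}(u) = \sum_{u \in V'} \deg_H(u) - |e \cap V'|.$$
Reducing modulo $k$ and using $\deg_H(u) \equiv 0 \pmod k$ yields $|e \cap V'| \equiv 0 \pmod k$ for every component $H'$ that $e$ meets.

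From here the conclusion is quick. For any component $H'$ met by $e$ we have $1 \le |e \cap V'| \equiv 0 \pmod k$, hence $|e \cap V'| \ge k = |e|$, so $e \subseteq V'$ and $e$ meets exactly one component of $H - e$. But passing from $H-e$ to $H$ introduces new adjacencies only among the vertices of $e$, so $e$ merges precisely the components it meets; if it meets $j$ of them then $\omega(H) = \omega(H-e) - (j-1)$. With $j = 1$ this gives $\omega(H) = \omega(H-e)$, contradicting that $e$ is a cut edge. Hence $H$ has no cut edges.

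I expect the main obstacle to be the local degree bookkeeping, specifically justifying the identity $\deg_{H'}(u) = \deg_H(u) - [u \in e]$. This hinges on the fact that no surviving edge straddles two components and on correctly handling parallel copies of $e$ and the (excluded) possibility of isolated vertices; once it is in place, the modular step and the component-merging count are routine. It is worth emphasizing that this argument rules out strong and weak cut edges simultaneously, so no appeal to the cycle characterization in Corollary~\ref{cor:strong-cut} is needed.
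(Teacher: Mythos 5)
Your proposal is correct and follows essentially the same route as the paper: both count degrees (flags) modulo $k$ inside a connected component of $H-e$ via the Handshaking Lemma and derive the key congruence $|e \cap V'| \equiv 0 \pmod{k}$, the paper getting an immediate contradiction from $1 \le |e\cap V_1| \le k-1$ while you equivalently conclude $e \subseteq V'$ and hence $\omega(H)=\omega(H-e)$. The extra bookkeeping you flag (the identity $\deg_{H'}(u)=\deg_H(u)-[u\in e]$ and the component-merging count) is sound and matches what the paper leaves implicit.
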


\begin{proof}
Suppose $e$ is a cut edge of $H$, and let $H_1=(V_1,E_1)$ be a connected component of $H-e$ that contains a vertex of $e$. Furthermore, let $r=|e \cap V_1|$. Then $1 \le r \le k-1$, and so $\sum_{v\in V_1} \deg_{H_1}(v) \equiv k|V_1|-r\not\equiv 0 \pmod{k}$. However, $\sum_{v\in V_1} \deg_{H_1}(v)=\sum_{f \in E_1}|f|=k|E_1|$, a contradiction. Hence $H$ cannot have cut edges.
\end{proof}

\begin{theo}
Let $H=(V,E)$ be a hypergraph such that the degree of each vertex and the cardinality of each edge are even. If $e$ is a cut edge of $H$, then every connected component of $H-e$ contains an even number of vertices of $e$. In particular, $H$ has no strong cut edges.
\end{theo}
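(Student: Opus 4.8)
The plan is to run a parity argument on each connected component of $H-e$, driven by the handshaking identity $\sum_{v}\deg(v)=\sum_{f}|f|$ (the second Lemma of Section~2). Fix a connected component $H_1=(V_1,E_1)$ of $H-e$. The first key observation is that every edge $f\in E-\mset{e}$ meeting $V_1$ is in fact contained in $V_1$: otherwise $f$ would connect a vertex of $V_1$ to a vertex outside $V_1$ in $H-e$, contradicting that $V_1$ is a connected component. Consequently $E_1=\mset{f\in E-\mset{e}: f\cap V_1\ne\emptyset}$, and for every $v\in V_1$ the edges of $H-e$ through $v$ are exactly the edges of $H_1$ through $v$. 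Hence $\deg_{H_1}(v)=\deg_{H-e}(v)$, which equals $\deg_H(v)$ if $v\notin e$ and $\deg_H(v)-1$ if $v\in e$.

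Summing over $V_1$ then gives
$$\sum_{v\in V_1}\deg_{H_1}(v)=\sum_{v\in V_1}\deg_H(v)-|e\cap V_1|.$$
Since every vertex degree in $H$ is even, the first sum on the right is even, so the left-hand side is congruent to $|e\cap V_1|$ modulo $2$. On the other hand, the handshaking identity applied to $H_1$ gives $\sum_{v\in V_1}\deg_{H_1}(v)=\sum_{f\in E_1}|f|$, and this is even because every edge of $H$, and hence every edge of $E_1$, has even cardinality. Comparing the two parities forces $|e\cap V_1|$ to be even, which is the first assertion. Note the argument is uniform: a component disjoint from $e$ simply has $|e\cap V_1|=0$, consistent with the conclusion.

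For the second assertion I would argue by contradiction using Corollary~\ref{cor:strong-cut}. If $e$ were a strong cut edge, then $e$ would contain exactly one vertex from each connected component of $H-e$ that it meets, i.e.\ $|e\cap V_1|=1$ for every intersected component---an odd number. But the first part shows that each such component has $|e\cap V_1|$ even (hence at least $2$), a contradiction. Therefore $H$ has no strong cut edges.

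I expect the main obstacle to be getting the degree bookkeeping exactly right: the crux is the observation that no edge of $H-e$ crosses between components, so that passing from $H-e$ to the induced component $H_1$ changes no vertex degree, while the only change from $H$ to $H-e$ is the single flag lost at each vertex of $e$. Once this is pinned down, the even-degree and even-cardinality hypotheses make both sides of the handshaking identity transparent modulo~$2$, and the result falls out immediately.
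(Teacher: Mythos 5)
Your proposal is correct and follows essentially the same route as the paper's proof: sum degrees over a component $H_1$ of $H-e$, note that $\sum_{v\in V_1}\deg_{H_1}(v)=\sum_{v\in V_1}\deg_H(v)-|e\cap V_1|=\sum_{f\in E_1}|f|$, deduce that $|e\cap V_1|$ is even by parity, and then rule out strong cut edges via Corollary~\ref{cor:strong-cut}. Your additional bookkeeping (no edge of $H-e$ crosses between components, so degrees in $H_1$ match those in $H-e$) is exactly the justification the paper leaves implicit.
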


\begin{proof}
Suppose $e$ is a cut edge of $H$, and let $H_1=(V_1,E_1)$ be any connected component of $H-e$. Furthermore, let $r=|e \cap V_1|$. Then  $\sum_{v\in V_1} \deg_{H_1}(v)=(\sum_{v\in V_1} \deg_{H}(v))-r=\sum_{f \in E_1} |f|$. Since $\sum_{v\in V_1} \deg_{H}(v)$ and $\sum_{f \in E_1} |f|$ are both even, so is $r$. Thus $e$ intersects every connected component in an even number of vertices, and hence by Corollary~\ref{cor:strong-cut} cannot be a strong cut edge.
\end{proof}

We now turn our attention to cut vertices. Recall that the vertex-deleted subhypergraph $H \b v$ is obtained from $H$ by deleting $v$ from the vertex set, as well as from all edges containing $v$, and then discarding any resulting empty edges.

\begin{defn}{\rm
A {\em cut vertex} in a hypergraph $H=(V,E)$ with $|V|\ge 2$ is a vertex $v \in V$ such that $\omega(H \b v) > \omega(H)$.
}
\end{defn}

Before we can prove a result similar to Lemma~\ref{lem:cut-edge} for cut vertices, we need to examine the relationship between cut vertices and cut edges of a hypergraph and its dual, as well as the relationship between cut vertices and cut edges of a hypergraph and cut vertices of its incidence graph.

\begin{theo}\label{the:cutG}
Let $H=(V,E)$ be a hypergraph without empty edges, and $G=\G(H)$ be its incidence graph.
\begin{enumerate}
\item Take any $e \in E$. Then $e$ is a cut edge of $H$ if and only if it is a cut vertex of $G$.
\item Let $|V| \ge 2$ and take any $v \in V$ such that $\{ v \} \not\in E$. Then $v$ is a cut vertex of $H$ if and only if it is a cut vertex of $G$.
\end{enumerate}
\end{theo}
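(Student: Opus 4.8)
The plan is to reduce both equivalences to a direct comparison of component counts, using Corollary~\ref{cor:omega} (which says $\omega(H)=\omega(G)$ whenever $H$ has no empty edges) together with Lemma~\ref{lem:delete} (which identifies the incidence graphs of the deleted hypergraphs). The guiding observation is that edge deletion in $H$ corresponds exactly to e-vertex deletion in $G$, and, under the stated hypotheses, vertex deletion in $H$ corresponds exactly to v-vertex deletion in $G$; since the number of components is transported faithfully by $\G$ on empty-edge-free hypergraphs, each cut condition translates verbatim into the other.

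For part (1), I would first invoke Corollary~\ref{cor:omega} to get $\omega(H)=\omega(G)$. Next I would observe that $H-e$ also has no empty edges, since removing one edge from the edge multiset leaves all other (nonempty) edges untouched and cannot create an empty one; hence Corollary~\ref{cor:omega} applies again to give $\omega(H-e)=\omega(\G(H-e))$. By Lemma~\ref{lem:delete}(1) we have $\G(H-e)=G\b e$, so $\omega(H-e)=\omega(G\b e)$. Chaining these identities shows that $\omega(H-e)>\omega(H)$ holds if and only if $\omega(G\b e)>\omega(G)$; that is, $e$ is a cut edge of $H$ precisely when it is a cut vertex of $G$.

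For part (2) the argument is parallel. Again $\omega(H)=\omega(G)$. The one point requiring verification is that $H\b v$ has no empty edges: this is immediate from the definition of a vertex-subset-induced subhypergraph, whose edge multiset $\mset{e\cap(V-\{v\}):e\in E,\ e\cap(V-\{v\})\ne\emptyset}$ explicitly discards every edge that would become empty. Thus Corollary~\ref{cor:omega} yields $\omega(H\b v)=\omega(\G(H\b v))$, and Lemma~\ref{lem:delete}(2)---whose hypotheses $|V|\ge 2$, ``$H$ has no empty edges'', and $\{v\}\not\in E$ are exactly those assumed here---gives $\G(H\b v)=G\b v$. Hence $\omega(H\b v)=\omega(G\b v)$, and $\omega(H\b v)>\omega(H)$ holds if and only if $\omega(G\b v)>\omega(G)$, as required.

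I do not anticipate a genuine obstacle: the mathematical content is entirely absorbed into the two cited results. The only place demanding care is the repeated check that the no-empty-edges hypothesis of Corollary~\ref{cor:omega} survives each deletion, and the recognition that the extra assumptions $|V|\ge 2$ and $\{v\}\not\in E$ in part (2) are precisely what Lemma~\ref{lem:delete}(2) requires. Conceptually, these hypotheses are exactly what guarantee that vertex deletion in $H$ mirrors vertex deletion in $G$; for instance, a singleton edge $\{v\}$ would, upon deleting $v$, be discarded in $H$ while leaving a surviving isolated e-vertex in $G$, breaking the clean correspondence between $\omega(H\b v)$ and $\omega(G\b v)$.
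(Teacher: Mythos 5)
Your proposal is correct and follows essentially the same route as the paper's own proof: both parts reduce to chaining $\omega(H)=\omega(G)$ from Corollary~\ref{cor:omega} with the identifications $\G(H-e)=G\b e$ and $\G(H\b v)=G\b v$ from Lemma~\ref{lem:delete}. Your extra care in verifying that $H-e$ and $H\b v$ remain free of empty edges, and that the hypotheses of Lemma~\ref{lem:delete}(2) match those of the theorem, is exactly the bookkeeping the paper performs, just spelled out more explicitly.
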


\begin{proof}
\begin{enumerate}
\item By Lemma~\ref{lem:delete}, we have $\G(H-e)=G \b e$. Since $H$, and hence $H-e$, has no empty edges, Corollary~\ref{cor:omega} tells us that $\omega(H)=\omega(G)$ and $\omega(H-e)=\omega(\G(H-e))$.  Hence $\omega(H-e)=\omega(G\b e)$. Thus $\omega(H-e)-\omega(H)=\omega(G\b e)-\omega(G)$, and it follows that $e$ is a cut edge of $H$ if and only if it is a cut vertex of $G$.
\item Since $H$ has no empty edges and $\{ v \} \not\in E$, Lemma~\ref{lem:delete} shows that $\G(H \b v)=G \b v$. Since $H$ and $H \b v$ have no empty edges, Corollary~\ref{cor:omega} gives $\omega(H)=\omega(G)$ and $\omega(H \b v)=\omega(\G(H \b v))$, respectively. Hence $\omega(H \b v)-\omega(H)=\omega(G \b v)-\omega(G)$, and $v$ is a cut vertex of $H$ if and only if it is a cut vertex of $G$.
\end{enumerate}
\vspace{-1.6cm}
\end{proof}

In the next corollary, recall that we denote the dual of a hypergraph $H=(V,E)$ by $H^T=(E^T,V^T)$, where $E^T$ is the set of labels for the edges in $E$, $V^T=\{ v^T: v\in V \}$, and $v^T=\{ e \in E^T: v \in e \}$ for all $v \in V$.

\begin{cor}\label{cor:cuts}
Let $H=(V,E)$ be a non-empty hypergraph  with neither empty edges nor isolated vertices, and let $H^T$ be its dual.
\begin{enumerate}
\item Let $|E| \ge 2$ and let $e \in E$ be an edge without pendant vertices. Then $e$ is a cut edge of $H$ if and only if $e$ is a cut vertex of $H^T$.
\item Let $|V| \ge 2$ and let $v \in V$ be such that $\{ v \} \not\in E$. Then $v$ is a cut vertex of $H$ if and only if $v^T$ is a cut edge of $H^T$.
\end{enumerate}
\end{cor}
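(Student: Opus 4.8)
The plan is to reduce both statements to the incidence graph and then invoke two facts already established: Lemma~\ref{lem:isom}, which gives an isomorphism $\phi\colon \G(H)\to\G(H^T)$ with $\phi(e)=e$ for $e\in E$ and $\phi(v)=v^T$ for $v\in V$, and Theorem~\ref{the:cutG}, which translates cut edges and (non-singleton-generating) cut vertices of a hypergraph without empty edges into cut vertices of its incidence graph. Since a graph isomorphism preserves cut vertices (it preserves the number of components of every vertex-deleted subgraph), I can carry a cut-vertex statement about $\G(H)$ across $\phi$ to $\G(H^T)$ and then pull it back into $H^T$.

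Before chaining the equivalences, I would verify that $H^T$ meets the hypotheses of Theorem~\ref{the:cutG}. As $H$ is non-empty with no isolated vertices, $H^T$ is well defined, and by Lemma~\ref{lem:delT}(2) (an isolated vertex of $H$ corresponds to an empty edge of $H^T$) the dual $H^T$ has no empty edges; dually, $H$ having no empty edges makes $H^T$ free of isolated vertices. For statement~1 I also translate the local hypothesis: a pendant vertex of $H$ lying in $e$ corresponds, via Lemma~\ref{lem:delT}(2), to a singleton edge $w^T=\{e\}$ of $H^T$, so ``$e$ contains no pendant vertices of $H$'' is exactly ``$\{e\}$ is not an edge of $H^T$''. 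This is precisely the side condition needed to apply the cut-vertex clause of Theorem~\ref{the:cutG} at the vertex $e\in E^T=V(H^T)$.

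With this in hand, statement~1 follows from the chain: $e$ is a cut edge of $H$ $\iff$ $e$ is a cut vertex of $\G(H)$ (Theorem~\ref{the:cutG}(1), valid since $H$ has no empty edges) $\iff$ $\phi(e)=e$ is a cut vertex of $\G(H^T)$ (isomorphism) $\iff$ $e$ is a cut vertex of $H^T$ (Theorem~\ref{the:cutG}(2) applied to $H^T$ at $e\in E^T$, valid since $|E^T|=|E|\ge 2$, $H^T$ has no empty edges, and $\{e\}\notin V^T$). Statement~2 follows from the analogous chain: $v$ is a cut vertex of $H$ $\iff$ $v$ is a cut vertex of $\G(H)$ (Theorem~\ref{the:cutG}(2), valid since $|V|\ge 2$, $H$ has no empty edges, and $\{v\}\notin E$) $\iff$ $\phi(v)=v^T$ is a cut vertex of $\G(H^T)$ (isomorphism) $\iff$ $v^T$ is a cut edge of $H^T$ (Theorem~\ref{the:cutG}(1) applied to $H^T$, valid since $H^T$ has no empty edges).

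The only delicate point, and hence the step I expect to require the most care, is the bookkeeping of hypotheses when passing to the dual side: each clause of Theorem~\ref{the:cutG} must genuinely apply to $H^T$, and the crucial conversion is the one identifying ``$e$ has no pendant vertices in $H$'' with ``$\{e\}\notin V^T$'' via Lemma~\ref{lem:delT}(2). Once those translations are in place, the two results are a routine composition of biconditionals with no computation involved.
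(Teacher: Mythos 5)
Your proposal is correct and follows essentially the same route as the paper's own proof: translate both sides to cut vertices of the incidence graphs via Theorem~\ref{the:cutG}, pass between $\G(H)$ and $\G(H^T)$ using the isomorphism of Lemma~\ref{lem:isom}, and verify the dual hypotheses (no empty edges in $H^T$, and the identification of ``$e$ has no pendant vertices'' with ``$\{e\}\notin V^T$''). Your hypothesis bookkeeping, including the pendant-vertex translation, matches the paper's argument exactly.
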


\begin{proof}
\begin{enumerate}
\item First, since $H$ has no empty edges, by Theorem~\ref{the:cutG}, $e$ is a cut edge of $H$ if and only if it is a cut e-vertex of $\G(H)$, and hence if and only if $e$ is a cut v-vertex of $\G(H^T)$. On the other hand, since $e$ contains no pendant vertices of $H$, we have that $\{ e \} \not\in V^T$. Also, $H^T$ has no empty edges since $H$ has no isolated vertices. Hence by Theorem~\ref{the:cutG}, $e$ is a cut vertex of $H^T$ if and only if $e$ is a cut v-vertex of $\G(H^T)$. The result follows.
\item By Theorem~\ref{the:cutG}, since $H$ has no empty edges and $\{ v \} \not\in E$, vertex $v$ is a cut vertex of $H$ if and only if $v$ is a cut v-vertex of $\G(H)$, and hence if and only if $v^T$ is a cut e-vertex of $\G(H^T)$. Again by Theorem~\ref{the:cutG}, since $H^T$ has no empty edges, this is the case if and only if $v^T$ is a cut edge of $H^T$.
\end{enumerate}
\vspace{-1.6cm}
\end{proof}

\begin{cor}
Let $H=(V,E)$ be a hypergraph with $|V|\ge 2$, $|E| \ge 1$, and with neither empty edges nor isolated vertices. Furthermore,  let $v$ be a cut vertex such that $\{ v \} \not\in E$. Then $\omega(H\b v) \le \omega(H)+\deg_H(v)-1$.
\end{cor}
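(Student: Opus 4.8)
The plan is to exploit the duality established in Corollary~\ref{cor:cuts}, reducing the cut-vertex bound to the already-proven cut-edge bound of Lemma~\ref{lem:cut-edge}. Since $H$ is non-empty with neither empty edges nor isolated vertices, its dual $H^T$ is well defined and itself has no empty edges. Because $|V| \ge 2$ and $\{v\} \not\in E$, Corollary~\ref{cor:cuts}(2) guarantees that $v^T$ is a cut edge of $H^T$, so Lemma~\ref{lem:cut-edge} applies to $v^T$ and yields
$$\omega(H^T - v^T) \le \omega(H^T) + |v^T| - 1.$$

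Next I would translate each term of this inequality back to $H$. By Lemma~\ref{lem:delT}(1), $|v^T| = \deg_H(v)$, and by Corollary~\ref{cor:omega}(2) (applicable since $H$ is non-empty with no isolated vertices), $\omega(H^T) = \omega(H)$. For the left-hand side, Lemma~\ref{lem:delT}(3) gives $(H \b v)^T = H^T - v^T$ (using $|V| \ge 2$, no empty edges, and $\{v\} \not\in E$), so $\omega(H^T - v^T) = \omega((H \b v)^T)$. Chaining these identities with the displayed inequality produces exactly $\omega(H \b v) \le \omega(H) + \deg_H(v) - 1$, \emph{provided} one further equality holds: $\omega((H \b v)^T) = \omega(H \b v)$.

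This last equality, obtained via Corollary~\ref{cor:omega}(2) applied to $H \b v$, is the step that needs care and is the main obstacle: I must verify that $H \b v$ is non-empty and has no isolated vertices, since these are the hypotheses of that corollary. Both follow from the standing assumptions. Fix any $u \in V - \{v\}$; such $u$ exists since $|V| \ge 2$, and $u$ is not isolated in $H$, so $u \in e$ for some $e \in E$. As $u \ne v$, we have $u \in e - \{v\}$, so $e - \{v\}$ is a nonempty edge of $H \b v$ containing $u$. This simultaneously shows that $H \b v$ has at least one edge (hence is non-empty) and that no vertex of $H \b v$ is isolated. With the hypotheses verified, Corollary~\ref{cor:omega}(2) applies and the chain of equalities closes.

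As an alternative, one could argue directly, mirroring the proof of Lemma~\ref{lem:cut-edge}: let $H'$ be the unique connected component of $H$ containing $v$, so that $\omega(H \b v) = \omega(H) - 1 + \omega(H' \b v)$, and then show $\omega(H' \b v) \le \deg_H(v)$. The key observations would be that each of the $\deg_H(v)$ edges through $v$ survives, shrunk by one vertex, as a nonempty edge of $H' \b v$ (nonempty because $\{v\} \not\in E$ forces every such edge to have cardinality at least $2$); that connectivity of $H'$ forces every component of $H' \b v$ to meet one of these surviving edges, via the initial edge of a $(v,u)$-path; and that each surviving edge lies wholly within a single component. Either route works, but I would favor the dual argument, as it reuses the machinery just developed, with the verification of the hypotheses on $H \b v$ being the only delicate point.
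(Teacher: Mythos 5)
Your proposal is correct and follows essentially the same route as the paper: pass to the dual, invoke Corollary~\ref{cor:cuts} to see that $v^T$ is a cut edge of $H^T$, apply Lemma~\ref{lem:cut-edge}, and translate back using $\omega(H^T)=\omega(H)$ and $|v^T|=\deg_H(v)$. The only divergence is in justifying $\omega(H^T-v^T)=\omega(H\b v)$, where the paper goes through incidence graphs (Corollary~\ref{cor:omega}, Lemmas~\ref{lem:isom} and~\ref{lem:delete}) while you use Lemma~\ref{lem:delT}(3) together with Corollary~\ref{cor:omega}(2) applied to $H\b v$; this is an equally valid and arguably more direct bookkeeping step, and your verification that $H\b v$ is non-empty with no isolated vertices (it also has no empty edges, by the definition of an induced subhypergraph) supplies exactly the hypotheses that application needs.
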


\begin{proof}
Consider the dual $H^T$ of $H$. Since $v$ is a cut vertex of $H$ and $\{ v \} \not\in E$, by Corollary~\ref{cor:cuts}, the edge $v^T$ of $H^T$ is a cut edge, and hence $\omega(H^T-v^T) \le \omega(H^T)+|v^T|-1$ by Lemma~\ref{lem:cut-edge}. By Corollary~\ref{cor:omega} we have $\omega(H^T)=\omega(H)$, and by Lemma~\ref{lem:delT}, we have $|v^T|=\deg_H(v)$. It remains to show that $\omega(H^T-v^T)=\omega(H\b v)$. Using Corollary~\ref{cor:omega} and Lemma~\ref{lem:delete}, we have $$\omega(H^T-v^T)=\omega(\G(H^T-v^T))=\omega(\G(H^T) \b v^T)=\omega(\G(H) \b v))=\omega(\G(H \b v))=\omega(H\b v)$$
since $H^T-v^T$ has no empty edges, since $\G(H^T-v^T)=\G(H^T) \b v^T$, and since $\G(H^T) \b v^T$ is isomorphic to $\G(H) \b v$, which in turn is equal to $\G(H \b v)$ because $\{ v \} \not\in E$.

We conclude that $\omega(H\b v) \le \omega(H)+\deg_H(v)-1$.
\end{proof}

A graph with a cut edge and at least three vertices necessarily possesses a cut vertex. Here is the analogue for hypergraphs.
\begin{theo}\label{the:cut-e-v}
Let $H=(V,E)$ be a hypergraph with a cut edge $e$ such that for some non-trivial connected component $H'$ of $H-e$, we have $|e \cap V(H')|=1$. Then $H$ has a cut vertex.
\end{theo}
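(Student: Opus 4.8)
The natural candidate for a cut vertex is $v$, the unique vertex of $e$ lying in $H'$; that is, $e \cap V(H')=\{v\}$. The plan is to show directly that $\omega(H\b v)>\omega(H)$, by arguing that deleting $v$ severs the rest of $V(H')$ from the rest of $H$. First I would record the structural facts I need. Since $H'$ is non-trivial, the set $W:=V(H')\setminus\{v\}$ is nonempty, and in particular $|V|\ge 2$, so $H\b v$ is defined. Because $H'$ is a connected component of $H-e$, every edge $f\ne e$ that meets $V(H')$ must in fact satisfy $f\subseteq V(H')$: any vertex of such an $f$ is adjacent in $H-e$ to a vertex of $V(H')$, hence lies in the same component $H'$. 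Finally, recall that a cut edge has cardinality at least two, so there is a vertex $u\in e$ with $u\ne v$; since $e\cap V(H')=\{v\}$, we get $u\in V\setminus V(H')$, and thus $V\setminus V(H')\ne\emptyset$.

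The key step is a short case analysis of the edges of $H\b v$. Each edge of $H\b v$ arises from some edge $f$ of $H$ by deleting $v$ (and discarding the result if it becomes empty). I would split into three cases: (i) if $f=e$, then $f\setminus\{v\}=e\setminus\{v\}\subseteq V\setminus V(H')$, because $e\cap V(H')=\{v\}$; (ii) if $f\ne e$ and $f\cap V(H')\ne\emptyset$, then $f\subseteq V(H')$ by the structural fact above, so $f\setminus\{v\}\subseteq W$; (iii) if $f\ne e$ and $f\cap V(H')=\emptyset$, then $v\notin f$ and $f\setminus\{v\}=f\subseteq V\setminus V(H')$. In every case the resulting edge lies entirely inside $W$ or entirely inside $V\setminus V(H')$, so no edge of $H\b v$ joins a vertex of $W$ to a vertex of $V\setminus V(H')$. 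Consequently the nonempty sets $W$ and $V\setminus V(H')$ lie in distinct connected components of $H\b v$.

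To finish, I would compare component counts. Every connected component of $H$ not containing $v$ is untouched by the deletion and remains a connected component of $H\b v$; only the component of $H$ containing $v$ can change, and this component contains $e$ and hence both $W$ and the vertex $u\in V\setminus V(H')$. By the previous paragraph, $W$ and $u$ end up in different components of $H\b v$, so this single component of $H$ splits into at least two components of $H\b v$. Therefore $\omega(H\b v)\ge\omega(H)+1>\omega(H)$, and $v$ is a cut vertex of $H$.

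The main obstacle is precisely the three-case edge analysis, which is where both hypotheses are indispensable: the assumption $|e\cap V(H')|=1$ is what makes case (i) place $e\setminus\{v\}$ wholly outside $V(H')$ (were $e$ to meet $H'$ in two vertices, it would remain anchored in $W$ after deleting $v$), and the non-triviality of $H'$ is what guarantees $W\ne\emptyset$ (mirroring the graph fact that a cut edge leading to a leaf need not produce a cut vertex). An alternative route would be to pass to the incidence graph $G=\G(H)$ via Theorem~\ref{the:cutG} and Lemma~\ref{lem:delete}, translating the statement into one about a cut vertex in a bipartite graph, but the direct argument above seems cleaner and avoids the extra hypotheses ($\{v\}\notin E$, no empty edges) needed to invoke those results.
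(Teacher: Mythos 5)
Your proof is correct, and it identifies the same cut vertex as the paper does: the unique vertex $v$ with $e \cap V(H') = \{v\}$. The mechanics differ, though. The paper first reduces to the case that $H$ is connected, picks $x \in V(H')-\{v\}$ and $y$ in another component of $H-e$, and argues through paths: every $(x,y)$-path in $H$ must use the edge $e$ (else it would be a path in $H-e$), and since $v$ is the only vertex of $e$ in $V(H')$, the anchor of that path lying in $e$ on the $H'$ side must be $v$; hence $x$ and $y$ are disconnected in $H \b v$. You instead certify the separation purely combinatorially: your three-case analysis shows every edge of $H \b v$ lies wholly inside $W = V(H')-\{v\}$ or wholly inside $V \setminus V(H')$, so no walk can cross between these nonempty sets, and you then count components explicitly to get $\omega(H \b v) \ge \omega(H)+1$. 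Each approach has merits. The paper's argument is shorter, but it glosses over the lifting step (a path in $H \b v$ uses edges of the form $f \cap (V-\{v\})$ and must be converted to a path in $H$ with the same anchors before the ``must anchor at $v$'' claim applies); your edge-partition argument makes exactly this point airtight, needs no reduction to the connected case, and handles parallel edges cleanly --- indeed your case (ii) implicitly shows $e$ can have no parallel copy, as such a copy would have to lie inside $V(H')$ yet contain a vertex outside it. Both proofs hinge on the same two hypotheses you correctly flag as indispensable: $|e \cap V(H')| = 1$ and the non-triviality of $H'$.
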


\begin{proof}
We may assume $H$ is connected. Let $H'$ and $H''$ be two connected components of $H-e$, with $H'$ non-trivial and $e \cap V(H')=\{ u \}$. Take any $x \in V(H')-\{ u \}$ and $y \in V(H'')$. Since $e$ is a cut edge, every $(x,y)$-path $P$ in $H$ must contain the edge $e$, and since $u$ is the only vertex of $e$ in $V(H')$, any such path $P$ must also contain $u$ as an anchor vertex. Hence $x$ and $y$ are disconnected in $H\b u$, and  $u$ is a cut vertex of $H$.
\end{proof}

\begin{cor}\label{cor:cut-e-v}
Let $H=(V,E)$ be a connected hypergraph with a strong cut edge $e$ such that $|e| < |V|$. Then $H$ has a cut vertex.
\end{cor}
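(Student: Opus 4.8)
The plan is to derive this as a direct consequence of Theorem~\ref{the:cut-e-v}. That theorem guarantees a cut vertex whenever the cut edge $e$ meets some non-trivial connected component of $H-e$ in exactly one vertex. So the whole task reduces to producing such a component from the hypotheses that $e$ is a \emph{strong} cut edge and that $|e| < |V|$.

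First I would invoke Corollary~\ref{cor:strong-cut} (or Theorem~\ref{the:strong-cut}, since $H$ is connected): because $e$ is a strong cut edge, $e$ contains exactly one vertex from each connected component of $H-e$ that it intersects. This immediately supplies the ``$|e \cap V(H')|=1$'' condition for \emph{every} component $H'$ that $e$ meets. What remains is purely an existence/counting argument: I must exhibit one such component that is moreover \emph{non-trivial} (has at least two vertices). This is where the hypothesis $|e| < |V|$ enters, and it is the only real content of the proof.

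The key counting step is as follows. By Theorem~\ref{the:strong-cut}, $\omega(H-e)=|e|$, and each of these $|e|$ components receives exactly one vertex of $e$, so the $|e|$ vertices of $e$ are distributed one-per-component across all $|e|$ components of $H-e$. Now the total number of vertices is $|V|$, and since $H$ has no empty edges being connected with $|V|\ge 2$ forces $V(H-e)=V$, so $\sum_{i} |V(H_i)| = |V|$ where $H_1,\ldots,H_{|e|}$ are the components of $H-e$. If every component were trivial, each would contribute exactly one vertex, giving $|V| = |e|$, contradicting $|e| < |V|$. Hence at least one component $H'$ must be non-trivial. Since $e$ intersects $H'$ in exactly one vertex, the hypotheses of Theorem~\ref{the:cut-e-v} are satisfied, and $H$ has a cut vertex.

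The main obstacle, such as it is, is bookkeeping at the boundary: I must be careful that every vertex of $V$ actually appears in $H-e$ (so that the components of $H-e$ partition all of $V$, not just $V \setminus e$), and that a strong cut edge indeed intersects \emph{all} $|e|$ components rather than only some of them. The first point follows because deleting an edge does not delete vertices, and a connected $H$ on at least two vertices has no isolated vertices to worry about; the second follows from the equality $\omega(H-e)=|e|$ combined with the ``exactly one vertex per intersected component'' statement, forcing $e$ to meet all of them. Once these two small checks are in place, the pigeonhole count $|e| < |V|$ does the rest.
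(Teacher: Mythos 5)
Your proof is correct and takes essentially the same route as the paper's: both apply Theorem~\ref{the:strong-cut} to conclude that $e$ meets each of the $\omega(H-e)=|e|$ connected components of $H-e$ in exactly one vertex, use $|e|<|V|$ and the fact that the component vertex sets partition $V$ to exhibit a non-trivial component, and then invoke Theorem~\ref{the:cut-e-v}. The boundary checks you flag (that $V(H-e)=V$ and that $e$ meets every component of $H-e$) are precisely the facts the paper uses implicitly, so nothing is missing.
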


\begin{proof}
Let $H_1,\ldots,H_k$ be the connected components of $H-e$. By Theorem~\ref{the:strong-cut}, the edge $e$ contains exactly one vertex from each $H_i$ (for $i=1,\dots,k$), and so $k=|e|<|V|$. Hence $|V(H_i)|\ge 2$ for at least one connected component $H_i$, and $|e \cap V(H_i)|=1$ since $e$ is a strong cut edge. It follows by Theorem~\ref{the:cut-e-v} that $H$ has a cut vertex.
\end{proof}


\subsection{Blocks and non-separable hypergraphs}

Throughout this section, we shall assume that our hypergraphs are connected and have no empty edges. We begin by extending the notion of a cut vertex as follows.

\begin{defn}{\rm
Let $H=(V,E)$ be a connected hypergraph without empty edges. A vertex $v \in V$ is a {\em separating vertex} for $H$ if $H$ decomposes into two non-empty connected hypersubgraphs with just vertex $v$ in common. That is, $H=H_1 \oplus H_2$, where  $H_1$ and $H_2$ are two non-empty connected hypersubgraphs of $H$ with  $V(H_1) \cap V(H_2)=\{ v \}$.
}
\end{defn}

\begin{theo}\label{the:sep-cut}
Let $H=(V,E)$ be a connected hypergraph without empty edges, with $|V|\ge 2$ and $v \in V$.
\begin{enumerate}
\item If $v$ is a cut vertex of $H$, then $v$ is a separating vertex of $H$.
\item If $v$ is a separating vertex of $H$ and  $\{ v \} \not\in E$, then $v$ is a cut vertex of $H$.
\end{enumerate}
\end{theo}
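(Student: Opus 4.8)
The plan is to prove the two implications separately by direct arguments inside the hypergraph, using the structure of the vertex-deleted subhypergraph $H \b v$ together with the definition of the decomposition $\oplus$. I would deliberately avoid the incidence-graph machinery here: statement (1) does not assume $\{ v \} \not\in E$, so Theorem~\ref{the:cutG}(2) is unavailable, and the direct construction turns out to be no harder.

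For statement (1), I would start from the hypothesis that $v$ is a cut vertex, so $H \b v$ is disconnected; since $H$ is connected, let $C_1,\ldots,C_k$ (with $k \ge 2$) be the connected components of $H \b v$, having vertex sets $V_1,\ldots,V_k$ that partition $V - \{ v \}$. The key structural observation is that each edge $e \in E$ with $e \ne \{ v \}$ gives rise to the non-empty edge $e \cap (V - \{ v \})$ of $H \b v$, all of whose vertices therefore lie in a single component $V_i$. This lets me assign every edge to a unique component (sending any singleton edges $\{ v \}$ arbitrarily to $C_1$); I set $E_1$ to be the edges assigned to $C_1$ and $E_2 = E - E_1$, and take $H_1 = H[E_1]$ and $H_2 = H[E_2]$.

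Then I would verify the four requirements for $v$ to be a separating vertex. Edge-disjointness $E_1 \cap E_2 = \emptyset$ and $E_1 \cup E_2 = E$ are immediate, and since every $e \in E_1$ satisfies $e \subseteq V_1 \cup \{ v \}$ (and symmetrically for $E_2$), the vertex sets $V(H_1) = V_1 \cup \{ v \}$ and $V(H_2) = (V_2 \cup \cdots \cup V_k) \cup \{ v \}$ meet exactly in $\{ v \}$ because the $V_i$ are pairwise disjoint. Connectivity of $H_1$ (and of $H_2$) follows by lifting: any walk inside a component $C_i$ of $H \b v$ lifts to a walk in $H$ using the original unrestricted edges of $E_i$, since adjacency via $e \cap (V-\{v\})$ implies adjacency via $e$. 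The crux — and the step I expect to be the main obstacle — is showing $v \in V(H_1) \cap V(H_2)$, i.e.\ that $v$ lies in at least one edge assigned to $C_1$ and in at least one edge assigned to some other component. This is precisely where connectedness of $H$ is indispensable: for each $i$ I would fix $y \in V_i$, take a $(v,y)$-walk in $H$, locate the last anchor equal to $v$, and show the following edge joins $v$ to a vertex of $V_i$, producing an edge containing $v$ that is assigned to $C_i$. Since $k \ge 2$, this simultaneously places $v$ in both $V(H_1)$ and $V(H_2)$ and certifies that $H_1, H_2$ are non-empty.

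For statement (2), I would argue directly: given $H = H_1 \oplus H_2$ with $V(H_1) \cap V(H_2) = \{ v \}$ and $\{ v \} \not\in E$, pick $x \in V(H_1) - \{ v \}$ and $y \in V(H_2) - \{ v \}$, which exist precisely because $\{ v \} \not\in E$ forces each non-empty $H_i$ to contain an edge other than $\{ v \}$, and hence a vertex besides $v$. Writing $E_1 = E(H_1)$ and $E_2 = E(H_2)$ (disjoint with union $E$), every edge of $H \b v$ has the form $e \cap (V - \{ v \})$, so those coming from $E_1$ lie entirely in $V(H_1) - \{ v \}$ and those from $E_2$ in $V(H_2) - \{ v \}$ — two disjoint vertex sets with no edge of $H \b v$ bridging them. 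Consequently $x$ and $y$ lie in different components of $H \b v$, giving $\omega(H \b v) \ge 2 > 1 = \omega(H)$, so $v$ is a cut vertex. I would explicitly flag the role of the hypothesis $\{ v \} \not\in E$: without it some $H_i$ could be the single loop edge $\{ v \}$, leaving no vertex $x \ne v$ and breaking the argument.
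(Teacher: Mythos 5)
Your proof is correct, and like the paper's it works entirely inside the hypergraph (your reason for avoiding the incidence graph is the right one: Theorem~\ref{the:cutG}(2) needs $\{v\}\not\in E$, and the separating-vertex analogue, Theorem~\ref{the:sepvxG}, is proved later and rests on the present theorem). The difference lies in how the decomposition for statement (1) is built. The paper partitions the \emph{vertices}: it sets $V_1$ equal to the vertex set of one component of $H \b v$ and $V_2$ equal to the rest, takes $H_i$ to be the subhypergraph induced by $V_i \cup \{v\}$ (so $v$ lies on both sides by fiat), and then proves as its key claim that no edge of $H$ can meet both $V_1$ and $V_2$ --- otherwise its restriction would be an edge of $H \b v$ joining two components --- so that every edge lies wholly on one side; connectivity of each $H_i$ is then inherited from $H$. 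You partition the \emph{edges}: each $e \ne \{v\}$ is assigned to the unique component of $H \b v$ containing $e \cap (V-\{v\})$, and $H_1 = H[E_1]$, $H_2 = H[E_2]$, which makes $E_1 \cap E_2 = \emptyset$ and $E_1 \cup E_2 = E$ automatic but shifts the burden to your crux, $v \in V(H_1)\cap V(H_2)$, proved by the last-occurrence-of-$v$ walk argument. These are the same two ingredients --- edges cannot straddle components of $H \b v$, and connectivity of $H$ ties $v$ into every component --- assembled in dual order; the paper's terse connectivity step ("$x$ is connected to $v$ in $H$, hence in $H_i$") implicitly needs exactly your truncation argument. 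One thing your version buys: the case $\{v\} \in E$, which statement (1) permits, is handled cleanly by your explicit arbitrary assignment, whereas under the paper's restriction-based induced subhypergraphs a singleton $\{v\}$ (or the restriction $e \cap (V_1 \cup \{v\}) = \{v\}$ of an edge living entirely on the other side) formally lands in both $E(H_1)$ and $E(H_2)$, a wrinkle one must silently repair by reading $H_i$ as the hypersubgraph $H[V_i \cup \{v\}]$; what the paper's version buys is that the vertex sets and their intersection $\{v\}$ are visible from the outset, leaving only the decomposition claim to check. For statement (2) your argument and the paper's coincide: both pick a vertex other than $v$ on each side --- possible precisely because $\{v\} \not\in E$ --- and show they are separated in $H \b v$; the paper phrases the separation as "every $(v_1,v_2)$-path must contain $v$ as an anchor," you phrase it as "no edge of $H \b v$ bridges $V(H_1)-\{v\}$ and $V(H_2)-\{v\}$," which is the same fact.
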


\begin{proof}
\begin{enumerate}
\item Assume $v$ is a cut vertex of $H$, let $V_1$ be the vertex set of one connected component of $H\b v$, and let $V_2=V(H \b v)-V_1$. Furthermore, let $H_1$ and $H_2$ be the subhypergraphs induced by the sets $V_1 \cup \{ v \}$ and $V_2 \cup \{ v \}$, respectively, so that $E(H_i)=\mset{ e \cap (V_i \cup \{ v \}): e \in E, e \cap (V_i \cup \{ v \}) \ne \emptyset }$ for $i=1,2$. Clearly $V(H_1) \cap V(H_2)=\{ v \}$. We show that $H_1$ and $H_2$ are in fact hypersubgraphs of $H$ with just vertex $v$ in common.

Take any edge $e \in E$ and suppose $e \cap V_i \ne \emptyset$ for both $i=1,2$. Let $e'=e \cap (V_1 \cup V_2)$. Then $e'$ is an edge of $H \b v$ with vertices in both $V_1$ and $V_2$,  contradicting the fact that $V_1$ is a connected component of $H \b v$. Hence either $e \subseteq V(H_1)$ or $e \subseteq V(H_2)$, and hence either $e \in E(H_1)$ or $e \in E(H_2)$, showing that $H$ decomposes into hypersubgraphs $H_1$ and $H_2$ with just vertex $v$ in common.

To see that each $H_i$ is connected, note that every vertex $x \in V_i$ is connected to $v$ in $H$, and hence also in $H_i$. Since $H_1$ and $H_2$ are non-trivial and connected, they must be non-empty.

Thus $v$ is a separating vertex for $H$.

\item Assume $v$ is a separating vertex of $H$ such that  $\{ v \} \not\in E$. Let $H_1$ and $H_2$ be non-empty connected hypersubgraphs of $H$ with just vertex $v$ in common such that $H=H_1 \oplus H_2$. Hence either $e \in E(H_1)$ or $e \in E(H_2)$ for all $e \in E$. For each $i=1,2$, since hypergraph $H_i$ is non-empty and connected without edges of the form $\{ v \}$, there exists a vertex $v_i \in V(H_i)-\{v \}$  connected to $v$ in $H_i$. We can now see that vertices $v_1$ and $v_2$ are connected in $H$ but not in $H \b v$, since every $(v_1,v_2)$-path in $H$ must contain $v$ as an anchor vertex. It follows that $H \b v$ is disconnected, and so $v$ is a cut vertex of $H$.
\end{enumerate}
\vspace{-1.4cm}
\end{proof}

\noindent Observe that the additional condition in the second statement of the theorem cannot be omitted: a vertex incident with a singleton edge and at least one more edge (which, as we show below, is necessarily a separating vertex) need not be a cut vertex. A simple example is a hypergraph $H=(V,E)$ with $V=\{ u,v \}$ and $E=\{ e_1,e_2 \}$ for $e_1=\{ v \}$ and $e_2=\{ u,v \}$. Then $v$ is a separating vertex of $H$ since $H=H_1 \oplus H_2$ for $H_1=(\{ v \},\{ e_1 \})$ and $H_2=(\{ u,v \}, \{ e_2 \})$, so $v$ is a separating vertex. However, $v$ is not a cut vertex since $H \b v=(\{ u \}, \{ \{ u \} \})$ is connected.

\begin{lemma}\label{lem:sep-vx}
Let $H=(V,E)$ be a connected hypergraph without empty edges, with $|E|\ge 2$, and with $v \in V$ such that $\{ v \} \in E$. Then $v$ is a separating vertex for $H$.
\end{lemma}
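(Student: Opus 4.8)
The plan is to exhibit an explicit decomposition $H = H_1 \oplus H_2$ witnessing that $v$ is a separating vertex. Let $e_0 \in E$ be an edge with $\psi(e_0) = \{ v \}$, which exists by hypothesis. I would take $H_1 = (\{ v \}, \mset{e_0})$ and let $H_2 = H[E - \mset{e_0}]$ be the hypersubgraph induced by the remaining edges. Then $H_1$ is a trivial, hence connected, hypersubgraph with a single edge, and since $|E| \ge 2$ the edge set of $H_2$ is non-empty. The multisets $\mset{e_0}$ and $E - \mset{e_0}$ partition $E$, so $H = H_1 \oplus H_2$ will follow once their vertex sets are seen to cover $V$; everything then reduces to two claims about $H_2$, namely that $V(H_2) = V$ and that $H_2$ is connected.

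The crux --- and the one genuinely hypergraph-specific ingredient --- is the observation that a singleton edge can never appear in a walk: an edge of a walk must render two distinct consecutive anchors adjacent and so must contain at least two vertices, whereas $e_0 = \{ v \}$ contains only one. From this the connectivity of $H_2$ follows quickly: given $x, y \in V(H_2)$, take an $(x,y)$-walk in $H$ (which exists since $H$ is connected); none of its edges is a singleton, so in particular none is $e_0$, and the very same walk is therefore a walk in $H_2 = (V(H_2), E - \mset{e_0})$.

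It remains to verify $V(H_2) = \cup_{e \in E - \mset{e_0}} e = V$, and in particular that $v \in V(H_2)$ --- this is exactly what forces $V(H_1) \cap V(H_2) = \{ v \}$ rather than $\emptyset$. Any vertex $w \ne v$ lies in some edge of $H$ (as $H$ is then connected of order at least $2$, hence has no isolated vertices), and since $e_0$ omits $w$, that edge survives in $E - \mset{e_0}$, giving $w \in V(H_2)$. For $v$ itself, I would note that $v \in V(H_2)$ unless $e_0$ is the only edge containing $v$; but then $\deg(v) = 1$ with its unique edge the singleton $\{ v \}$, so $v$ is adjacent to no other vertex. If $|V| \ge 2$ this contradicts connectedness, and if $|V| = 1$ then all $|E| \ge 2$ edges equal $\{ v \}$ and a parallel copy of $e_0$ still lies in $E - \mset{e_0}$; either way $v \in V(H_2)$.

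The main obstacle is precisely the connectivity of $H_2$ together with the check $v \in V(H_2)$, both of which hinge on the single structural fact that singleton edges are invisible to walks, allowing one copy of $\{ v \}$ to be deleted without affecting reachability among the remaining vertices.
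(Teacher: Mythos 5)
Your proposal is correct and takes essentially the same approach as the paper: both decompose $H$ into $H_1=(\{v\},\{e_0\})$ and the hypersubgraph consisting of all remaining edges, whose vertex set is $V$ (the paper simply takes $H_2=(V,E-\{e_0\})$ spanning, while you take $H_2=H[E-\mset{e_0}]$ and then verify $V(H_2)=V$, which yields the same hypergraph). The only real difference is that you make explicit the key fact that singleton edges cannot occur in walks --- hence deleting $e_0$ preserves connectivity --- a step the paper asserts without detail.
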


\begin{proof}
Since $H$ is connected and has at least two (non-empty) edges, it must have  at least two edges incident with $v$. Let $e_1=\{ v \}$ and $e_2$ be another edge incident with $v$. Furthermore, let $H_1=(\{ v \}, \{ e_1 \})$ and $H_2=(V,E-\{ e_1 \})$. Then $H_1$ and $H_2$ are two non-empty connected hypersubgraphs of $H$ with just vertex $v$ in common such that $H=H_1 \oplus H_2$. Hence $v$ is a separating vertex for $H$.
\end{proof}

Recall that in a graph without loops, separating vertices are precisely the cut vertices. Hence these two terms are equivalent for the incidence graph of a hypergraph. Next, we determine the correspondence between separating vertices of a hypergraph and separating vertices (cut vertices) of its incidence graph.

\begin{theo}\label{the:sepvxG}
Let $H=(V,E)$ be a connected hypergraph without empty edges, and $G=\G(H)$ be its incidence graph. Take any $v \in V$. Then $v$ is a separating vertex of $H$ if and only if it is a separating vertex (cut vertex) of $G$.
\end{theo}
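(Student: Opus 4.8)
The plan is to prove both directions by relating separating vertices of $H$ to cut vertices of $H$ and then invoking earlier results, while carefully handling the exceptional case of a singleton edge $\{v\}$ that breaks the clean equivalence between cut vertices and separating vertices established in Theorem~\ref{the:sep-cut}. First I would recall that in a loopless graph, and in particular in the bipartite simple graph $G=\G(H)$, ``separating vertex'' and ``cut vertex'' coincide, so the statement reduces to showing that $v$ is a separating vertex of $H$ if and only if $v$ is a cut vertex of $G$. I would then split into two cases according to whether $\{v\}\in E$.

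\emph{Case 1: $\{v\}\not\in E$.} Here the machinery lines up perfectly. By Theorem~\ref{the:sep-cut}, since $\{v\}\not\in E$, the vertex $v$ is a separating vertex of $H$ if and only if it is a cut vertex of $H$. By Theorem~\ref{the:cutG}(2), since $H$ has no empty edges and $\{v\}\not\in E$, the vertex $v$ is a cut vertex of $H$ if and only if it is a cut vertex of $G$. Chaining these two equivalences gives exactly what we want in this case.

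\emph{Case 2: $\{v\}\in E$.} This is the delicate case and I expect it to be the main obstacle, because Theorem~\ref{the:cutG}(2) is not applicable and $v$ may fail to be a cut vertex of $H$ even when it is a separating vertex. The idea is to show that when $\{v\}\in E$ and $H$ is connected without empty edges, \emph{both} sides of the desired equivalence hold outright, so the biconditional is trivially true. For the hypergraph side: if $|E|=1$ then $E=\{\{v\}\}$, forcing $V=\{v\}$ (since $H$ has no empty edges and is connected), but the singleton-edge $\{v\}$ vertex together with $G$ having only two vertices means $G$ has no cut vertex and $v$ is not separating, so I must check this degenerate subcase separately and confirm both sides are false. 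If instead $|E|\ge 2$, then Lemma~\ref{lem:sep-vx} guarantees $v$ is a separating vertex of $H$, so the hypergraph side holds. For the graph side, I would argue directly in $G$: the e-vertex $\{v\}$ has degree $1$ in $G$ (it is adjacent only to the v-vertex $v$), and since $H$ is connected with $|E|\ge 2$, the v-vertex $v$ is adjacent to at least one other e-vertex. Removing $v$ from $G$ isolates the pendant e-vertex $\{v\}$ from the rest of $G$, which still contains other vertices; hence $v$ is a cut vertex of $G$. Thus in the subcase $|E|\ge 2$ both sides hold, and in the subcase $|E|=1$ both sides fail, so the equivalence holds throughout Case 2.

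The main technical care required is in Case 2: verifying that $G\b v$ is genuinely disconnected (not merely that we deleted a vertex) by exhibiting the pendant e-vertex $\{v\}$ as a separated component, and separately dispatching the trivial $|E|=1$ configuration where the equivalence holds vacuously with both sides false. Once these bookkeeping checks are in place, combining Case 1 and Case 2 completes the proof.
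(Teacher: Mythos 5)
Your proposal follows essentially the same route as the paper's proof: for $\{v\}\not\in E$ you chain Theorem~\ref{the:sep-cut} with Theorem~\ref{the:cutG}, and for $\{v\}\in E$ you argue directly in $G$ via the pendant e-vertex $\{v\}$ and invoke Lemma~\ref{lem:sep-vx} on the hypergraph side. Your treatment of the singleton-edge case, including the $|E|=1$ subcase where both sides of the equivalence fail, matches the paper's in substance (the paper phrases it without subcases, observing instead that either side of the equivalence forces a second edge incident with $v$).

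There is, however, one small but genuine hole: your Case 1 silently assumes $|V|\ge 2$. Both Theorem~\ref{the:sep-cut} and the second statement of Theorem~\ref{the:cutG} carry the hypothesis $|V|\ge 2$, and indeed a cut vertex of $H$ is only \emph{defined} when $|V|\ge 2$, so your chain of equivalences is simply not available when $V=\{v\}$ and $\{v\}\not\in E$. That configuration is permitted by the theorem's hypotheses: since $H$ is connected with no empty edges and $\{v\}\not\in E$, it forces $E=\emptyset$, so $H$ is trivial and empty, $G$ is the one-vertex graph, and $v$ is a separating vertex of neither --- the biconditional holds, but this must be checked rather than left to theorems whose hypotheses fail. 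The paper disposes of exactly this situation as an explicit third case. You did catch the analogous degenerate configuration inside Case 2 (the $|E|=1$ subcase), so this one omitted check is the only missing piece; adding it makes your proof complete and essentially identical to the paper's.
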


\begin{proof}
If $|V|\ge 2$ and $\{v\} \not\in E$, then by Theorem~\ref{the:sep-cut}, $v$ is a separating vertex of $H$ if and only if it is a cut vertex of $H$ and therefore,  by Theorem~\ref{the:cutG}, if and only if it is a cut vertex (separating vertex) of $G$.

Assume $e=\{ v \} \in E$. If $v$ is a separating vertex of $H$, then it must be incident with another edge $e'$. Hence in the graph $G \b v$, vertex $e$ is an isolated vertex and $e'$ lies in another connected component, showing that $v$ is a cut vertex for $G$. Conversely, if $v$ is a cut vertex of $G$, then $G$ must contain e-vertices adjacent to $v$ other than $e$, and hence $H$ contains edges incident with $v$ other than $e$.  Hence, by Lemma~\ref{lem:sep-vx}, $v$ is a separating vertex of $H$.

The remaining case is that $|V|=1$ and $\{ v \} \not\in E$. Then $H$ must be empty, $G$ is a trivial graph, and $v$ is a separating vertex for neither. \end{proof}

\begin{cor}\label{cor:sep^T}
Let $H=(V,E)$ be a connected non-empty hypergraph with neither empty edges nor isolated vertices, and let $H^T$ be its dual. Let $v \in V$ and $e \in E$, and let $v^T$ and $e$ be the corresponding edge and vertex, respectively, in $H^T$. Then:
\begin{enumerate}
\item $v$ is a separating vertex of $H$ if and only if $v^T$ is a cut edge of $H^T$.
\item $e$ is a cut edge of $H$ if and only if it is a separating vertex of $H^T$.
\end{enumerate}
\end{cor}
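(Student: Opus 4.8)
The plan is to route both statements through the incidence graph, exploiting the fact that $\G(H)$ and $\G(H^T)$ are isomorphic (Lemma~\ref{lem:isom}) via the map $\phi$ that sends each v-vertex $v$ of $\G(H)$ to the e-vertex $v^T$ of $\G(H^T)$ and fixes each e-vertex $e$. Since $\phi$ is a graph isomorphism, it preserves the property of being a cut vertex; thus cut/separating behaviour in $H$ and in $H^T$ can be compared by first translating each into a statement about a cut vertex of the relevant incidence graph and then transporting along $\phi$. The key characterizations I would invoke are Theorem~\ref{the:cutG}(1) (an edge is a cut edge precisely when it is a cut vertex of the incidence graph) and Theorem~\ref{the:sepvxG} (a vertex is separating precisely when it is a cut vertex of the incidence graph). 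A pleasant feature of this route is that Theorem~\ref{the:sepvxG} has no ``$\{v\}\not\in E$'' restriction, so the argument handles singleton edges uniformly.

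Before applying these tools to $H^T$, I would first confirm that $H^T$ satisfies their hypotheses. Since $H$ is non-empty with no isolated vertices, $H^T$ is well defined and, by Lemma~\ref{lem:delT}(2), has no empty edges; and since $H$ is connected with no isolated vertices, Corollary~\ref{cor:omega}(2) gives $\omega(H^T)=\omega(H)=1$, so $H^T$ is connected. Hence both Theorem~\ref{the:cutG} and Theorem~\ref{the:sepvxG} apply to $H^T$.

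For statement (1), I would chain three equivalences. By Theorem~\ref{the:sepvxG} applied to $H$, the vertex $v$ is a separating vertex of $H$ if and only if it is a cut vertex of $\G(H)$; transporting along $\phi$, this holds if and only if $v^T$ is a cut vertex of $\G(H^T)$; and since $v^T$ is an e-vertex of $\G(H^T)$ (an edge of $H^T$) and $H^T$ has no empty edges, Theorem~\ref{the:cutG}(1) applied to $H^T$ makes this equivalent to $v^T$ being a cut edge of $H^T$. Composing the three gives the claim.

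For statement (2), the identical route runs along the reverse correspondence. By Theorem~\ref{the:cutG}(1) applied to $H$, the edge $e$ is a cut edge of $H$ if and only if $e$ is a cut vertex of $\G(H)$; along $\phi$ (which fixes $e$) this holds if and only if $e$ is a cut vertex of $\G(H^T)$; and since $e$ is now a v-vertex of $\G(H^T)$ and $H^T$ is connected without empty edges, Theorem~\ref{the:sepvxG} applied to $H^T$ makes this equivalent to $e$ being a separating vertex of $H^T$. I do not anticipate any genuine obstacle: the whole argument is bookkeeping about which role (v-vertex versus e-vertex) each object plays in the two incidence graphs, and the only point requiring care is that the cited theorems genuinely apply to $H^T$ — which is precisely what the preliminary observation secures.
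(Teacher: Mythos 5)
Your proposal is correct and follows essentially the same route as the paper: both prove statement (1) by chaining Theorem~\ref{the:sepvxG} (for $H$), the isomorphism of Lemma~\ref{lem:isom}, and Theorem~\ref{the:cutG} (for $H^T$), after noting via Corollary~\ref{cor:omega} that $H^T$ is connected without empty edges. The only cosmetic difference is that the paper obtains statement (2) by interchanging the roles of $H$ and $H^T$ in statement (1), whereas you rerun the same three-step chain directly; these are the same argument.
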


\begin{proof} Observe that by Corollary~\ref{cor:omega}, $H^T$ is connected since $H$ is. Clearly, it is also non-empty with neither empty edges nor isolated vertices.
\begin{enumerate}
\item By Theorem~\ref{the:sepvxG}, $v$ is a separating vertex of $H$ if and only if it is a cut vertex of its incidence graph $\G(H)$, and by Theorem~\ref{the:cutG}, $v^T$ is a cut edge of $H^T$ if and only if it is a cut vertex of $\G(H^T)$. Since $\G(H)$ and $\G(H^T)$ are isomorphic with an isomorphism mapping $v$ to $v^T$, the result follows.
\item Interchanging the roles of $H$ and $H^T$, this statement follows from the previous one.
\end{enumerate}
\vspace{-10mm}
\end{proof}

We shall now define blocks of a hypergraph, and in the rest of this section, investigate their properties.

\begin{defn}{\rm
A connected hypergraph without empty edges that has no separating vertices is called {\em non-separable}. A {\em block} of a hypergraph $H$ is a maximal non-separable hypersubgraph of $H$.
}
\end{defn}

\begin{lemma}\label{lem:emptyB}
Let $H$ be a connected hypergraph without empty edges and $B$ an empty block of $H$. Then $H=B$, and $H$ is empty and trivial.
\end{lemma}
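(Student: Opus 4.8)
The plan is to unpack the definitions of \emph{block} and \emph{empty block}, and show that the only way a maximal non-separable hypersubgraph can be empty is if the whole hypergraph is a single isolated vertex. First I would recall that $B$ is a block of $H$, so $B=(V',E')$ is a maximal non-separable hypersubgraph of $H$; since $B$ is empty, $E'=\emptyset$. A non-separable hypergraph is by definition connected and has no empty edges, so in particular $B$ is connected. But a connected hypergraph with no edges can have at most one vertex (any two distinct vertices would need a walk, hence an edge, between them), so $V'=\{w\}$ for some single vertex $w$, and $B$ is trivial.

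Next I would exploit maximality to force $H=B$. Suppose for contradiction that $H \ne B$, so that $H$ has either a vertex other than $w$ or an edge. Since $H$ is connected, $w$ must be connected to every other vertex; if $H$ had any vertex besides $w$, there would be a walk from $w$ to it, hence an edge incident with $w$ in $H$. So in any case $H$ must contain at least one edge $e$ (note $e$ is non-empty since $H$ has no empty edges). The key step is then to produce a strictly larger non-separable hypersubgraph containing $B$, contradicting maximality. The natural candidate is the hypersubgraph $(e, \{e\})$ consisting of a single non-empty edge together with exactly its vertices: a hypergraph with a single edge is connected, has no empty edges, and has no separating vertex (removing any one vertex cannot disconnect a single-edge hypergraph into two non-empty connected pieces sharing that vertex), so it is non-separable. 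If $w \in e$ — which we can arrange because $H$ is connected and $w$ is connected to the endpoints of $e$ via a walk that must use some edge incident with $w$ — then this single-edge hypersubgraph properly contains $B=(\{w\},\emptyset)$, contradicting the maximality of $B$.

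The main obstacle I anticipate is the careful bookkeeping in the maximality argument: I need to guarantee that there genuinely is a non-separable hypersubgraph strictly containing the trivial one $B$, and to do this cleanly I should first locate an edge incident with $w$. The connectivity of $H$ does the work here: if $|V(H)| \ge 2$, connectivity gives a walk from $w$ to any other vertex, and the first edge of that walk is incident with $w$; if $|V(H)|=1$ but $E(H)\ne\emptyset$, then since $H$ has no empty edges and $w$ is the only vertex, every edge equals $\{w\}$, which is again incident with $w$. Either way $w$ lies in some edge $e$, and the single-edge hypersubgraph on $e$ is a non-separable hypersubgraph properly containing $B$. This contradiction forces $E(H)=\emptyset$ and $V(H)=\{w\}$, so $H=B$ is empty and trivial, as claimed.
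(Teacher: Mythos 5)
Your proof is correct and follows essentially the same route as the paper: identify the empty block $B$ as a single vertex $w$, use connectivity of $H$ to find an edge $e$ incident with $w$, and contradict maximality of $B$ with the non-separable hypersubgraph $(e,\{e\})$. Your write-up is in fact slightly more careful than the paper's, which asserts without comment that a non-empty connected $H$ contains an edge incident with $v$ (the step you justify by splitting into the cases $|V(H)|\ge 2$ and $|V(H)|=1$).
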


\begin{proof}
Since $B$ is empty and connected, it contains a single vertex, say $v$. If $H$ is non-empty, then it contains an edge $e$ incident with $v$. But then $(e,\{ e \})$ is a non-separable hypersubgraph of $H$ that properly contains the block $B$, a contradiction. Hence $H$ is empty. Since it is connected, it must also be trivial (that is, $V=\{ v \}$). Consequently, $H=B$.
\end{proof}

In a graph, every cycle is contained within a block. What follows is the analogous result for hypergraphs.

\begin{lemma}\label{lem:C}
Let $H$ be a hypergraph without empty edges, $C$ a cycle in $H$, and $\H(C)$ and $\H'(C)$ the hypersubgraph and subhypergraph, respectively, of $H$ associated with $C$ (see Definition~\ref{def:HH'}). Then $\H(C)$ and $\H'(C)$ are non-separable.
\end{lemma}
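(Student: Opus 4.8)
The plan is to translate everything into the incidence graph $G=\G(H)$ and to invoke Theorem~\ref{the:sepvxG}, which (for a connected hypergraph without empty edges) identifies separating vertices with cut vertices of the incidence graph. Write $C=v_0 e_1 v_1 \ldots v_{k-1} e_k v_0$ with $k\ge 2$. By Lemma~\ref{lem:W-W_G}, $C$ corresponds to a cycle $C_G=v_0 e_1 v_1 \ldots v_{k-1} e_k v_0$ of length $2k$ in $G$, alternating between the v-vertices $v_0,\ldots,v_{k-1}$ (the anchors) and the e-vertices $e_1,\ldots,e_k$.

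First I would dispose of the two easy requirements of non-separability. The edges of $\H(C)$ are exactly the $e_i\in E(H)$, which are non-empty since $H$ has no empty edges; the edges of $\H'(C)$ are the sets $e_i\cap V_a(C)$, each of which contains $v_{i-1}$ and $v_i$ and is therefore non-empty as well. For connectedness, I would use Lemma~\ref{lem:sub-inc} to identify the two incidence graphs: $\G(\H(C))$ is the subgraph of $G$ induced by $(V_a(C)\cup V_f(C))\cup E(C)$, and $\G(\H'(C))$ the subgraph of $G$ induced by $V_a(C)\cup E(C)$. Both contain the cycle $C_G$; in $\G(\H'(C))$ every vertex already lies on $C_G$, while in $\G(\H(C))$ the only vertices off $C_G$ are the floaters, each of which is joined to at least one e-vertex of $C_G$ (namely any edge of $C$ containing it). Hence both incidence graphs are connected, and Theorem~\ref{the:conn} yields that $\H(C)$ and $\H'(C)$ are connected.

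The heart of the argument is showing that neither incidence graph has a cut vertex among its v-vertices, for then Theorem~\ref{the:sepvxG} rules out separating vertices and completes the proof. For $\G(\H'(C))$ this is immediate: all of its vertices lie on the $2k$-cycle $C_G$, so deleting any single vertex leaves a spanning path (any further edges arising from non-consecutive incidences only increase connectivity), and a cycle has no cut vertex. For $\G(\H(C))$ I would split into cases. Deleting a floater $w$ leaves $C_G$ intact and every remaining floater still attached to an e-vertex on $C_G$, so the graph stays connected. Deleting an anchor $v_i$ turns $C_G$ into a path that still contains all e-vertices and all other anchors, and each floater remains attached to an e-vertex on that path; again the graph stays connected. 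In either case no v-vertex is a cut vertex.

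I expect the main obstacle to be organizational rather than deep: one must keep straight the two subhypergraphs and their differing incidence graphs, establish connectedness independently before applying Theorem~\ref{the:conn} (to avoid circularity), and treat the floaters of $\H(C)$ carefully. The key observation is that the floaters are precisely the vertices not on $C_G$, yet all of their neighbors lie on $C_G$, so a floater can never separate the graph; everything else reduces to the elementary fact that removing one vertex from a cycle leaves a connected path.
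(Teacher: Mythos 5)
Your proposal is correct and follows essentially the same route as the paper's proof: pass to the incidence graph, invoke Theorem~\ref{the:sepvxG} to reduce separating vertices to cut v-vertices, and observe that deleting any v-vertex (anchor or floater) leaves all e-vertices on the surviving cycle or path, with every remaining v-vertex still attached to an e-vertex. Your write-up is in fact somewhat more explicit than the paper's (which handles $\H'(C)$ only with a ``similarly'' remark and the Hamilton-cycle observation), but there is no substantive difference in method.
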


\begin{proof}
As in Definition~\ref{def:HH'}, let $V_a(C)$, $V_f(C)$, and $E(C)$ be the sets of anchor vertices, floater vertices, and edges of the cycle $C$, respectively. Recall that $\H(C)=(V_a(C) \cup V_f(C),E(C))$ and $\H'(C)=(V_a(C),\mset{e \cap V_a(C): e \in E(C)})$.

To see that $\H(C)$ is non-separable, first observe that it is connected.
Let $G_C$ be the incidence graph of $\H(C)$. Then $G_C$ consists of a cycle $C_G$ with v-vertices and e-vertices alternating, and with additional v-vertices (corresponding to floater vertices of $C$) adjacent to some of the e-vertices of the cycle. Suppose $v \in V$ is a separating vertex of $\H(C)$. By Theorem~\ref{the:sepvxG}, $v$ is then a cut v-vertex of $G_C$. Because $G_C$ is bipartite, every connected component of $G_C\b v$ must contain e-vertices. However, $G_C \b v$ contains the cycle $C_G$ if $v$ is a floater, and the path $C_G \b v$ if $v$ is an anchor, both containing all e-vertices of $G_C$. Thus $G_C\b v$ must have a single connected component, and $G_C$ has no cut vertices, a contradiction.  Hence $\H(C)$ is non-separable.

Similarly it can be shown that $\H'(C)$ is non-separable. (Note that the incidence graph of $\H'(C)$ possesses a Hamilton cycle.)
\end{proof}

We are now ready to show that a hypergraph decomposes into its blocks just as a graph does.

\begin{theo}\label{the:blocks}
Let $H=(V,E)$ be a connected hypergraph without empty edges. Then:
\begin{enumerate}
\item The intersection of any two distinct blocks of $H$ contains no edges and at most one vertex.
\item The blocks of $H$ form a decomposition of $H$.
\item The hypersubgraph $\H(C)$ associated with any cycle $C$ of $H$ is contained within a block of $H$.
\end{enumerate}
\end{theo}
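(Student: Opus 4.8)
The plan is to mirror the classical graph-theoretic proof of the block decomposition, transporting it through the incidence graph and invoking the correspondence between separating vertices of a hypergraph and cut vertices of its incidence graph (Theorem~\ref{the:sepvxG}). The key conceptual tool is that blocks of a hypergraph correspond, via the incidence graph, to something closely related to blocks of $G=\G(H)$, and that the block structure of a (bipartite, simple) graph is already well understood.

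\medskip
\noindent {\sc Proof proposal.}

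For statement (1), I would argue that two distinct blocks $B_1$ and $B_2$ cannot share an edge, and can share at most one vertex. First, suppose $B_1$ and $B_2$ share two distinct vertices $u,w$; I would like to conclude that $B_1 \cup B_2$ is non-separable, contradicting maximality. To do this I would pass to the incidence graph: each $B_i$ corresponds (via Lemma~\ref{lem:sub-inc} and Theorem~\ref{the:sepvxG}) to a non-separable subgraph $\G(B_i)$ of $G$, these being blocks or unions of pieces in $G$. In a graph, two blocks sharing two vertices would merge into a larger 2-connected subgraph; I would use exactly this classical fact about graph blocks, together with the observation that $\G(B_1)$ and $\G(B_2)$ are $2$-connected (no cut vertex) subgraphs of $G$ sharing at least two vertices, so their union has no cut vertex, hence corresponds back to a non-separable hypersubgraph of $H$ properly containing each $B_i$ — the contradiction. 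For sharing an edge: if $B_1$ and $B_2$ share an edge $e$, then in $G$ the e-vertex $e$ lies in both $\G(B_i)$, and by Lemma~\ref{lem:sub-inc}(3) its full incident-vertex set is present in each, so the two graph-blocks share $e$ together with all $|e|$ of its neighbours, hence at least two vertices; reduce this to the previous case.

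For statement (2), I would show every edge lies in exactly one block and every vertex lies in at least one block, then that distinct blocks are edge-disjoint (already from (1)). Every edge $e$ lies in some block because the single-edge hypersubgraph $(\,e,\{e\}\,)$ — more precisely $H[\{e\}]$ — is non-separable (its incidence graph is a star, which is $2$-connected in the block sense, i.e. has no cut vertex unless $|e|\le 1$, and a careful check handles small $|e|$), and is contained in a maximal such, namely a block. Edge-disjointness of distinct blocks is exactly the ``no shared edge'' half of (1). Thus $E$ is partitioned among the blocks, and since $H=H_1\oplus H_2$ means edge-disjoint union, the blocks decompose $H$ in the sense of Definition~\ref{def:U}; I would also note every vertex of $V$ appears, using connectedness and absence of empty edges so that no vertex is isolated.

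For statement (3), the hypersubgraph $\H(C)$ associated with a cycle $C$ is non-separable by Lemma~\ref{lem:C}. A maximal non-separable hypersubgraph of $H$ containing $\H(C)$ exists (by finiteness), and that is by definition a block; hence $\H(C)$ lies in a block. The only subtlety is to confirm $\H(C)$ is genuinely a hypersubgraph (it is, by construction $\H(C)=(V_a(C)\cup V_f(C),E(C))$ with $E(C)\subseteq E$) and that it has no empty edges, which follows since $H$ has none and the edges of $\H(C)$ are full edges of $H$.

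\medskip
\noindent The step I expect to be the main obstacle is the merging argument in (1): making precise the claim that the union of two hypergraph-blocks sharing two vertices is again non-separable. The cleanest route is through the incidence graph, where this is the standard fact that two graph-blocks meet in at most one vertex; the delicate part is verifying that a non-separable hypersubgraph of $H$ corresponds under $\G$ to a subgraph of $G$ with no cut vertex, and conversely that such a subgraph of $G$ (which must, by Lemma~\ref{lem:sub-inc}(3), contain each e-vertex together with its entire neighbourhood) pulls back to a genuine hypersubgraph. Handling the case $e=\{v\}$ (singleton edges, where Theorem~\ref{the:sepvxG} and Lemma~\ref{lem:sep-vx} show the lone vertex is separating) requires care, and may force me to treat singleton-edge blocks — which are just $(\{v\},\{\{v\}\})$ — as a special, degenerate case throughout.
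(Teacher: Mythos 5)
Your parts (2) and (3) follow essentially the paper's own route and are fine (modulo one small point below), but part (1) has a genuine gap. The merging step rests on the claim that $\G(B_1)$ and $\G(B_2)$ are $2$-connected, i.e.\ have no cut vertex, because the $B_i$ are non-separable. That claim is false: Theorem~\ref{the:sepvxG} only forbids cut \emph{v-vertices}; the incidence graph of a non-separable hypergraph can perfectly well have cut e-vertices, which correspond to weak cut edges, and non-separable hypergraphs may have weak cut edges (this is the content of the theorem following Theorem~\ref{the:HGblocks} in the paper). Concretely, take $V=\{1,2,3,4\}$ and $E=\{e_1,e_2,e_3\}$ with $e_1=\{1,2\}$, $e_2=\{3,4\}$, $e_3=\{1,2,3,4\}$: this hypergraph is non-separable (any decomposition must place $e_3$, which contains every vertex, in one part, so the other part shares at least two vertices with it), yet $e_3$ is a cut vertex of its incidence graph. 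So the objects you want to merge are not graph blocks, and the classical fact that two graph blocks sharing two vertices coincide does not apply to them; this is precisely why the paper relates blocks of $H$ to maximal \emph{clusters} of blocks of $G$ (Theorem~\ref{the:HGblocks}) rather than to blocks of $G$. You flagged this verification yourself as ``the delicate part''; it is not merely delicate --- it fails as stated.

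The gap is repairable while keeping your route: restrict attention to v-vertices. By Theorem~\ref{the:sepvxG} each $\G(B_i)$ has no cut v-vertex, so for any v-vertex $v$ both $\G(B_1)\b v$ and $\G(B_2)\b v$ are connected; since $B_1$ and $B_2$ share two vertices of $H$ --- or share an edge $e$, in which case $\G(B_1)$ and $\G(B_2)$ share the e-vertex $e$ together with its (at least one, as $H$ has no empty edges) neighbours --- the two graphs still intersect after deleting $v$, so $\G(B_1)\cup\G(B_2)=\G(B_1\cup B_2)$ has no cut v-vertex, and Theorem~\ref{the:sepvxG}, applied to the connected hypergraph $B_1\cup B_2$ (which has no empty edges), yields that $B_1\cup B_2$ is non-separable, contradicting maximality. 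Note that this uniform argument also absorbs the shared-singleton-edge case, which your ``reduce to two shared vertices'' step misses, since a shared singleton edge supplies only one shared vertex of $H$. The paper instead avoids incidence graphs in (1) altogether: it shows $B\b v=(B_1\b v)\cup(B_2\b v)$ is connected using Theorem~\ref{the:sep-cut}, and disposes of the singleton-edge possibility with Lemma~\ref{lem:sep-vx}. Two minor corrections elsewhere: the incidence graph of $(e,\{e\})$ is a star whose centre \emph{is} a cut vertex once $|e|\ge 2$, so your parenthetical is backwards --- non-separability of $(e,\{e\})$ follows simply because a hypergraph with a single edge cannot be written as $H_1\oplus H_2$ with both parts non-empty; and in (2) connectedness does not rule out isolated vertices, since $H$ may be the trivial hypergraph ($V=\{v\}$, $E=\emptyset$), a case the paper treats separately.
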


\begin{proof}
\begin{enumerate}
\item Suppose $B_1$ and $B_2$ are distinct blocks of $H$ that share more than just a single vertex. First assume that $B_1$ and $B_2$ have at least  two vertices in common, and let $B=B_1 \cup B_2$ (see Definition~\ref{def:U}). We'll show $B$ is a non-separable hypergraph. First, $B$ is connected since $B_1$ and $B_2$ are connected with intersecting vertex sets. Take any $v \in V(B)$. Can $v$ be a separating vertex of $B$? Since $B_1$ and $B_2$ are non-separable, $v$ is not a separating vertex in either block, and hence by Theorem~\ref{the:sep-cut}, $v$ is not a cut vertex in either block, and $B_1 \b v$ and $B_2 \b v$ are connected. Since $B \b v=(B_1 \b v) \cup (B_2 \b v)$, and $B_1 \b v$ and $B_2 \b v$ are connected with at least one common vertex, it follows that $B \b v$ is connected. Hence $v$ is not a cut vertex of $B$. If $v$ is a separating vertex of $B$, then by Theorem~\ref{the:sep-cut}, we must have $e \in E(B)$ for $e=\{ v \}$. Hence, without loss of generality, $e \in E(B_1)$. But then, by Lemma~\ref{lem:sep-vx}, $v$ is a separating vertex of $B_1$, because $B_1$ is connected with at least two vertices and hence at least one more edge incident with $v$  --- a contradiction. Hence $B$ is a non-separable hypersubgraph of $H$, and since $B_1$ and $B_2$ are maximal non-separable hypersubgraphs of $H$, we must have $B_1=B_2=B$, a contradiction.

    Hence $B_1$ and $B_2$ have at most one common vertex. Suppose they have a common edge $e$. Then $e$ must be a singleton edge, say $e = \{ v \}$. If $B_1$ or $B_2$ contains another edge, then by Lemma~\ref{lem:sep-vx}, $v$ is a separating vertex for this block, a contradiction. Hence $B_1=B_2=(\{ v \},\{ e \})$, again a contradiction. We conclude that $B_1$ and $B_2$ have no common edges and at most one common vertex.

\item If $H$ has an isolated vertex $v$, then $V=\{ v\}$ and $E = \emptyset$, so $H$ is a block.  Hence assume every vertex of $H$ is incident with an edge. Observe that any $e \in E$ induces a hypersubgraph $(e,\{ e \})$ of $H$, which is non-separable and hence is a hypersubgraph of a block of $H$. Thus every edge and every vertex of $H$ is contained in a block. Since by the first statement of the theorem no two blocks share an edge, every edge of $H$ is contained in exactly one block, and $H$ is an edge-disjoint union of its blocks.

\item By Lemma~\ref{lem:C}, the hypersubgraph $\H(C)$ of a cycle $C$ is non-separable, and hence a hypersubgraph of a block of $H$.
\end{enumerate}
\vspace{-15mm}
\end{proof}

The next lemma will be used several times.

\begin{lemma}\label{lem:Bsubgr}
Let  $H'$ be a connected hypersubgraph of a connected hypergraph $H$ without empty edges, and $v \in V(H')$. If $H'$ contains edges of two blocks of $H$ that intersect in vertex $v$, then $v$ is a separating vertex of $H'$.
\end{lemma}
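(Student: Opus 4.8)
The plan is to pass to the incidence graph and reduce the claim to a cut-vertex statement there. Write $G=\G(H)$ and $G'=\G(H')$; since $H'$ is connected and, being a hypersubgraph of $H$, has no empty edges, Theorem~\ref{the:conn} gives that $G'$ is connected, and by Lemma~\ref{lem:sub-inc} $G'$ is the subgraph of $G$ induced by $V(H')\cup E(H')$. By Theorem~\ref{the:sepvxG} it suffices to prove that the v-vertex $v$ is a cut vertex of $G'$. Let $e_1\in E(H')\cap E(B_1)$ and $e_2\in E(H')\cap E(B_2)$ be edges of $H'$ lying in the two blocks $B_1,B_2$, where $V(B_1)\cap V(B_2)=\{v\}$ and $E(B_1)\cap E(B_2)=\emptyset$ by Theorem~\ref{the:blocks}(1). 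These are e-vertices of $G'$ distinct from $v$, so it is enough to show they lie in different components of $G'\b v$. Since $G'\b v$ is a subgraph of $G\b v$, this will follow once I show that $e_1$ and $e_2$ lie in different components of $G\b v$; note that this last statement no longer refers to $H'$ at all.

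For the crux I argue by contradiction: suppose there is a path $P$ from $e_1$ to $e_2$ in $G\b v$. Because $B_1$ and $B_2$ are connected without empty edges, the induced subgraphs $\G(B_1)$ and $\G(B_2)$ of $G$ are connected (Lemma~\ref{lem:sub-inc}, Theorem~\ref{the:conn}) and, by the block intersection property above, meet only in the v-vertex $v$. Hence I can choose a path $\alpha$ from $e_1$ to $v$ inside $\G(B_1)$ and a path $\beta$ from $v$ to $e_2$ inside $\G(B_2)$; their concatenation $\gamma=\alpha\beta$ is a genuine $e_1$--$e_2$ path through $v$ whose two e-vertices adjacent to $v$ are some $f_1\in E(B_1)$ and $f_2\in E(B_2)$, so $f_1\neq f_2$. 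From the two paths $\gamma$ (through $v$) and $P$ (avoiding $v$) I extract a cycle $C_G$ of $G$ that passes through $v$ and contains both $f_1$ and $f_2$. By Lemma~\ref{lem:W-W_G} this cycle corresponds to a cycle $C$ of $H$ with $f_1,f_2\in E(C)$, so the hypersubgraph $\H(C)$ contains an edge of $B_1$ and an edge of $B_2$. By Lemma~\ref{lem:C}, $\H(C)$ is non-separable, hence by Theorem~\ref{the:blocks}(3) it lies inside a single block $B$; but then $f_1,f_2\in E(B)$, together with the fact that every edge lies in exactly one block (Theorem~\ref{the:blocks}(1),(2)), forces $B=B_1=B_2$, contradicting $B_1\neq B_2$. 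This contradiction shows that $e_1$ and $e_2$ are separated by $v$ in $G$, which completes the argument.

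The step I expect to require the most care is the extraction of a genuine cycle through $v$ containing both $f_1$ and $f_2$, since $\gamma$ and $P$ share their endpoints $e_1,e_2$ and may meet in between, so they are not internally disjoint. The standard fix is a first-intersection argument: travelling along $\gamma$ away from $v$ in each direction, let $x$ (respectively $y$) be the first vertex met that also lies on $P$; then $x\neq y$ (they lie on opposite sides of $v$ along $\gamma$ and both differ from $v$), the two $\gamma$-segments from $x$ to $v$ and from $v$ to $y$ meet $P$ only at $x$ and $y$, and splicing them with the $x$--$y$ subpath of $P$ yields a cycle. Because this cycle reaches $v$ precisely through its two $\gamma$-neighbours $f_1$ and $f_2$, both edges are guaranteed to appear on it, which is exactly what the contradiction needs.
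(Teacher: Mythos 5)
Your proof is correct, and it takes a genuinely different route from the paper's. The paper argues entirely inside the hypergraph $H'$: after disposing of degenerate cases (an empty block is impossible by Lemma~\ref{lem:emptyB}; a trivial block forces $\{v\}\in E(H')$, whence Lemma~\ref{lem:sep-vx} applies directly), it picks vertices $x,y$ adjacent to $v$ via edges $e_1\in E(B_1)\cap E(H')$ and $e_2\in E(B_2)\cap E(H')$, supposes an $(x,y)$-path $P$ in $H'\b v$, declares $Pye_2ve_1x$ a cycle, contradicts Theorem~\ref{the:blocks}(1),(3), and finally converts ``cut vertex of $H'$'' into ``separating vertex of $H'$'' via Theorem~\ref{the:sep-cut}. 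You instead pass to the incidence graph and prove the stronger, $H'$-free claim that e-vertices of distinct blocks through $v$ lie in distinct components of $G\b v$, then descend to $G'\b v$ and invoke Theorem~\ref{the:sepvxG}. Your route buys two things. First, it needs no case analysis for trivial blocks: Theorem~\ref{the:sepvxG} already absorbs the singleton-edge situation in which $v$ is separating but not a cut vertex of $H'$ (and an edge $\{v\}$ is isolated in $G\b v$, so your contradiction hypothesis is vacuous there), whereas the paper must split off that case because its final step, Theorem~\ref{the:sep-cut}, fails without the hypothesis $\{v\}\notin E$. Second, your first-intersection splicing honestly produces a genuine cycle; the paper's concatenation $Pye_2ve_1x$ is slightly loose at exactly this point, since $P$ lives in $H'\b v$, so its edges are traces $e\cap(V(H')-\{v\})$, and after lifting them back to edges of $H'$ the closed walk could in principle repeat $e_1$ or $e_2$ --- repairable by a shortening argument, but not addressed. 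What the paper's route buys is brevity and staying entirely in hypergraph language, with no translation of cycles back and forth through Lemma~\ref{lem:W-W_G}. Also note that your argument never uses connectivity of $H$ itself beyond what Theorem~\ref{the:blocks} requires, and in fact establishes the reusable fact that $v$ separates $E(B_1)$ from $E(B_2)$ in all of $G$, not merely in $G'$.
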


\begin{proof}
Let $B_1$ and $B_2$ be distinct blocks of $H$ intersecting in vertex $v$ such that $H'$ contains an edge from each of them. Note that $B_1$ and $B_2$ must both be non-empty, since otherwise $B_1=B_2=H$ is empty by Lemma~\ref{lem:emptyB}. If $B_1$ is trivial, then $\{ v \} \in E(B_1) \cap E(H')$, and $v$ is a separating vertex of $H'$ by Lemma~\ref{lem:sep-vx}. Hence assume $B_1$ and $B_2$ are both non-trivial. Since $H'$ is connected, we may assume there exist a vertex $x$ adjacent to $v$ in $B_1 \cap H'$ via edge $e_1$, and a vertex $y$ adjacent to $v$ in $B_2 \cap H'$ via edge $e_2$. Suppose there exists an $(x,y)$-path $P$ in $H' \b v$. Then $Pye_2ve_1x$ is a cycle in $H'$ containing vertices $v$, $x$, and $y$. By Statement (3) of Theorem~\ref{the:blocks}, these three vertices lie in a common block $B$, and by Statement (1) of the same result, $B_1=B=B_2$, a contradiction. Hence $x$ and $y$ must lie in distinct connected components of $H' \b v$. It follows that $v$ is a cut vertex of $H'$, and hence a separating vertex of $H'$ by Theorem~\ref{the:sep-cut}.
\end{proof}

\begin{theo}\label{the:blocks-sepvx}
Let $H=(V,E)$ be a connected hypergraph without empty edges, and $v \in V$. Then $v$  is a separating vertex of $H$ if and only if it lies in more than one block.
\end{theo}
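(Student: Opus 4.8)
The plan is to prove both directions by relating separating vertices to the block structure established in Theorem~\ref{the:blocks}. For the easy direction, I would assume $v$ lies in more than one block and show $v$ is separating. Let $B_1$ and $B_2$ be two distinct blocks containing $v$; by Statement (1) of Theorem~\ref{the:blocks} they share only the vertex $v$. I would apply Lemma~\ref{lem:Bsubgr} with $H'=H$ itself, since $H$ is a connected hypersubgraph of $H$ containing an edge (indeed all edges) of both $B_1$ and $B_2$ which intersect in $v$; the lemma immediately yields that $v$ is a separating vertex of $H$. One small subtlety: I must confirm that each block actually contains an edge incident with $v$, or handle the trivial/empty-block case separately. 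By Lemma~\ref{lem:emptyB} a block can be empty only when $H=B$ is trivial, which is excluded once $v$ lies in two blocks; and a trivial block containing $v$ carries the singleton edge $\{v\}$, so in every case each block contributes an edge at $v$, making Lemma~\ref{lem:Bsubgr} applicable.

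For the harder direction, I would assume $v$ is a separating vertex of $H$ and show it lies in more than one block. Since $v$ is separating, $H=H_1\oplus H_2$ where $H_1$ and $H_2$ are non-empty connected hypersubgraphs with $V(H_1)\cap V(H_2)=\{v\}$. Each $H_i$ contains at least one edge, and that edge lies in some block of $H$ by Statement (2) of Theorem~\ref{the:blocks}. The goal is to produce two \emph{distinct} blocks both containing $v$. I would pick an edge $f_1$ of $H_1$ and an edge $f_2$ of $H_2$, lying in blocks $B_1$ and $B_2$ respectively, chosen so that each $f_i$ is incident with $v$ (possible because each $H_i$ is connected, non-empty, and contains $v$, so walking from $v$ inside $H_i$ reaches an edge at $v$). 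It then remains to argue $B_1\ne B_2$.

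The main obstacle is precisely showing $B_1\ne B_2$: I must rule out the possibility that a single block straddles both sides of the decomposition. Suppose for contradiction $B_1=B_2=B$, so $B$ contains edges $f_1\subseteq V(H_1)$ and $f_2\subseteq V(H_2)$. Since $B$ is non-separable and contains edges on both sides of $v$, I would derive a contradiction with the fact that $v$ separates $H$. The cleanest route is to observe that $B$ is a connected hypersubgraph of $H$ in which $v$ is \emph{not} a separating vertex, yet $B$ meets both $V(H_1)-\{v\}$ and $V(H_2)-\{v\}$; since $H=H_1\oplus H_2$ with the two sides meeting only at $v$, any walk in $H$ between the two sides must pass through $v$, forcing $v$ to be a separating vertex of $B$ as well (via the same path-through-$v$ argument used in Theorem~\ref{the:sep-cut} and Lemma~\ref{lem:Bsubgr}), contradicting non-separability of $B$.

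Finally I would assemble these pieces, being careful about the singleton-edge case $\{v\}\in E$. If $\{v\}\in E$ and $|E|\ge 2$, Lemma~\ref{lem:sep-vx} already declares $v$ separating, and the singleton edge $\{v\}$ forms its own trivial block $(\{v\},\{\{v\}\})$ distinct from any block carrying another edge at $v$, so $v$ lies in more than one block; this keeps the correspondence intact in the boundary case. With the two directions and this case check in hand, the equivalence follows. The delicate point throughout is ensuring that "contains an edge incident with $v$" holds in each invoked block so that Lemma~\ref{lem:Bsubgr} and Lemma~\ref{lem:sep-vx} apply without gaps.
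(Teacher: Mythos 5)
Your proposal is correct and follows essentially the same route as the paper: the backward direction is exactly the paper's argument (Lemma~\ref{lem:Bsubgr} applied to $H'=H$, after using Lemma~\ref{lem:emptyB} to rule out empty blocks), and the forward direction uses the same skeleton of choosing edges $f_1\in E(H_1)$, $f_2\in E(H_2)$ incident with $v$ and showing their blocks must be distinct. The only variation is in how the contradiction is reached when $B_1=B_2=B$: the paper exhibits the explicit decomposition $B=(B\cap H_1)\oplus(B\cap H_2)$ into connected, non-empty hypersubgraphs meeting only in $v$, which contradicts non-separability straight from the definition and handles a singleton edge $\{v\}$ uniformly, whereas you show $v$ is a cut vertex of $B$ and upgrade it to a separating vertex via Theorem~\ref{the:sep-cut} --- equally valid, but it is precisely this path-through-$v$ argument (needing vertices other than $v$ on both sides of $B$) that forces your separate treatment of the $\{v\}\in E$ case.
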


\begin{proof}
Assume $v$ is a separating vertex of $H$. Then $H=H_1 \oplus H_2$, where $H_1$ and $H_2$ are non-empty connected hypersubgraphs with just vertex $v$ in common. Hence there exist $e_1 \in E(H_1)$ and $e_2 \in E(H_2)$ such that $v \in e_1 \cap e_2$. By Statement (2) of Theorem~\ref{the:blocks}, there exist blocks $B_1$ and $B_2$ of $H$ such that $e_1 \in E(B_1)$ and $e_2 \in E(B_2)$.

Observe that $B_1 \cap H_1$ is connected: since $B_1$ is connected, and $H_1$ and $H_2$ intersect only in the vertex $v$, every vertex in $B_1 \cap H_1$ is connected to $v$ in $B_1 \cap H_1$. Similarly, $B_1 \cap H_2$ is connected.

Suppose that $B_1=B_2$. Then $B_1 =(B_1 \cap H_1) \oplus (B_1 \cap H_2)$ with $B_1 \cap H_1$ and $B_1 \cap H_2$ connected, non-empty, and intersecting only in vertex $v$ --- a contradiction, because $B_1$ is non-separable. Hence $B_1$ and $B_2$ must be distinct blocks of $H$ containing vertex $v$.

Conversely, assume that $v$ lies in the intersection of distinct blocks $B_1$ and $B_2$ of $H$. By Lemma~\ref{lem:emptyB}, $B_1$ and $B_2$ are non-empty. Then $H$ itself is a connected hypersubgraph of $H$ containing edges from two blocks of $H$ that intersect in $v$. It follows from Lemma~\ref{lem:Bsubgr} that $v$ is a separating vertex of $H$.
\end{proof}

Theorems~\ref{the:blocks} and \ref{the:blocks-sepvx} show that a block graph of a hypergraph can be defined just as for graphs. Namely, let $H$ be a connected hypergraph without empty edges, $S$ the set of its separating vertices, and ${\cal B}$ the collection of its blocks. Then the {\em block graph} of $H$ is the bipartite graph with vertex bipartition $\{S,{\cal B}\}$ and edge set $\{ vB: v \in S, B \in {\cal B},v \in V(B) \}$. From the third statement of Theorem~\ref{the:blocks} it then follows that the block graph of $H$ is a tree.

Next, we show that blocks of a hypergraph correspond to maximal clusters of blocks of its incidence graph, to be defined below.

\begin{defn}\label{def:cluster}{\rm
Let $H=(V,E)$ be a connected hypergraph without empty edges, and $G=\G(H)$ its incidence graph. A {\em cluster of blocks} of $G$ is a connected union of blocks of $G$, no two of which share a v-vertex.}
\end{defn}

\begin{theo}\label{the:HGblocks}
Let $H=(V,E)$ be a connected hypergraph without empty edges and $H'$ its hypersubgraph, and let $G=\G(H)$ and $G'=\G(H')$ be their incidence graphs, respectively. Then $H'$ is a block of $H$ if and only if $G'$ is a maximal cluster of blocks of $G$.
\end{theo}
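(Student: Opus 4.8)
The plan is to prove both directions of the biconditional by translating non-separability of $H'$ into the block structure of its incidence graph $G'=\G(H')$, using Theorem~\ref{the:sepvxG} as the main bridge. Recall that a connected hypergraph without empty edges is non-separable if and only if it has no separating vertex, and by Theorem~\ref{the:sepvxG} a v-vertex $v$ is a separating vertex of $H'$ if and only if it is a cut vertex of $G'$. The key observation to establish first is that $H'$ (without empty edges) is non-separable precisely when $G'$ has no cut \emph{v}-vertex; since $G'$ is bipartite, all its blocks are edges or cycles, and one should check how cut e-vertices versus cut v-vertices govern whether $G'$ is a single block or a cluster of blocks in the sense of Definition~\ref{def:cluster}.

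For the forward direction, I would assume $H'$ is a block of $H$ and argue that $G'$ is a cluster of blocks of $G$ that is maximal. First, since $H'$ is non-separable, it has no separating vertex, so by Theorem~\ref{the:sepvxG} the graph $G'$ has no cut v-vertex. The blocks of $G'$ are therefore glued together only along e-vertices (an e-vertex $e$ of degree $|e|\ge 2$ can be a cut vertex of $G'$, and that is permitted), which is exactly the statement that $G'$ is a connected union of blocks of $G$ no two of which share a v-vertex — a cluster of blocks. Here I would use Lemma~\ref{lem:sub-inc} to ensure that the blocks of $G'$ are genuinely blocks of $G$: because $H'$ is a hypersubgraph, $\deg_{G'}(e)=\deg_G(e)=|e|$ for every e-vertex, so no block of $G'$ can be enlarged inside $G$ across an e-vertex. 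Maximality of the cluster then follows from maximality of the block $H'$: if $G'$ were properly contained in a larger cluster $G''$, then $G''=\G(H'')$ for a hypersubgraph $H''\supsetneq H'$ (again via Lemma~\ref{lem:sub-inc}), and the cluster structure of $G''$ would force $H''$ to have no separating vertex by Theorem~\ref{the:sepvxG}, contradicting maximality of $H'$.

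For the converse, I would assume $G'$ is a maximal cluster of blocks of $G$ and show $H'$ is a block. Since $G'$ is a connected union of blocks no two of which share a v-vertex, it has no cut v-vertex, so by Theorem~\ref{the:sepvxG} the hypersubgraph $H'$ has no separating vertex and is therefore non-separable (it inherits connectedness and absence of empty edges from being the incidence graph of a hypersubgraph with $\deg_{G'}(e)=|e|$). For maximality of $H'$, suppose $H'$ were properly contained in a non-separable hypersubgraph $H''$; then $G''=\G(H'')$ has no cut v-vertex and is connected, so its blocks share only e-vertices, making $G''$ a cluster of blocks properly containing $G'$ — contradicting maximality of the cluster.

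The main obstacle I expect is the careful bookkeeping around e-vertices and singleton edges, together with verifying that the ``no shared v-vertex'' condition in the cluster definition lines up exactly with ``no cut v-vertex.'' In particular one must confirm that two blocks of $G'$ meeting at an e-vertex $e$ is consistent with $H'$ being non-separable, whereas two blocks meeting at a v-vertex would produce a separating vertex; this rests on the bipartite structure and on the fact (from Lemma~\ref{lem:sub-inc}) that e-vertices retain full degree in any hypersubgraph's incidence graph. The degenerate cases — singleton edges $\{v\}\in E(H')$ (where, by Lemma~\ref{lem:sep-vx} and Theorem~\ref{the:sepvxG}, $v$ may become a cut vertex of $G$ but the corresponding e-vertex is handled as an isolated/pendant vertex upon deletion), and trivial or empty blocks (covered by Lemma~\ref{lem:emptyB}) — will require separate attention to keep the correspondence exact.
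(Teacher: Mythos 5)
Your overall strategy (translating separating vertices of $H'$ into cut v-vertices of $G'$ via Theorem~\ref{the:sepvxG}, and viewing clusters as blocks glued along e-vertices) matches the paper's, but there is a genuine gap at the step where you pass from ``$G'$ has no cut v-vertex'' to ``$G'$ is a cluster of blocks \emph{of $G$}.'' Definition~\ref{def:cluster} requires the constituents to be blocks of $G$, and you justify identifying blocks of $G'$ with blocks of $G$ solely by the fact that e-vertices keep full degree (Lemma~\ref{lem:sub-inc}); that is, you only rule out enlargement of a block of $G'$ across an e-vertex. The substantive danger is enlargement across a \emph{v-vertex}: a v-vertex of $G'$ typically has strictly larger degree in $G$ than in $G'$, because edges of $H$ outside $H'$ may contain it. That your justification is insufficient can be seen concretely: let $H$ be the triangle with vertices $u,v,w$ and edges $e_1=\{u,v\}$, $e_2=\{v,w\}$, $e_3=\{w,u\}$, and let $H'=(\{u,v\},\{e_1\})$. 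Then $H'$ is connected and non-separable, and its e-vertex has full degree in $G'=\G(H')$, yet the two blocks of $G'$ (the single edges $ue_1$ and $e_1v$) are not blocks of $G$ (the unique block of $G$ is the whole $6$-cycle), so $G'$ is not a cluster of blocks of $G$ at all. Hence ``non-separable plus full e-degrees'' cannot imply your claim; the maximality of $H'$ must enter, and your argument never uses it at this point. The same conflation of ``blocks of the subgraph'' with ``blocks of $G$'' undermines your converse: from $H'\subsetneq H''$ with $H''$ non-separable you conclude that $\G(H'')$ is a cluster of blocks of $G$ properly containing $G'$, which again does not follow.

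The paper closes exactly this hole with Lemma~\ref{lem:Bsubgr}: it takes $C$ to be the union of all blocks \emph{of $G$} sharing an edge with $G'$, and shows no two of them meet in a v-vertex, since otherwise that v-vertex would be a separating vertex of $G'$ (Lemma~\ref{lem:Bsubgr}) and hence of the block $H'$ (Theorem~\ref{the:sepvxG}). Maximality is then obtained by extending $C$ to a maximal cluster $C^*$, recognizing $C^*$ as $\G(H^*)$ for a hypersubgraph $H^*$ --- this is where Lemma~\ref{lem:sub-inc} legitimately enters, and note it requires $C^*$ to be \emph{maximal} so that every edge of $G$ at an e-vertex of $C^*$ lies in $C^*$, a point your appeal to Lemma~\ref{lem:sub-inc} for an arbitrary cluster $G''$ also glosses over --- and invoking maximality of the block $H'$ to force $H^*=H'$. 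For the converse, rather than arguing directly with $H''$, the paper embeds $H'$ in a block $B$ of $H$ and applies the already-proved forward direction: $\G(B)$ is a cluster containing the maximal cluster $G'$, forcing $G'=\G(B)$. You should restructure your proof along these lines; some form of Lemma~\ref{lem:Bsubgr} (or an equivalent cycle argument combining Theorem~\ref{the:blocks} with Lemma~\ref{lem:W-W_G}) is indispensable, and it is precisely the ingredient missing from your proposal.
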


\begin{proof}
Assume $H'$ is a block of $H$. We first show that $G'=\G(H')$ is a cluster of blocks of $G$. Let $C$ be the union of all blocks of $G$ that have a common edge with $G'$. Observe that since $H'$ is connected and has no empty edges, $G'$ is connected by Theorem~\ref{the:conn}, and consequently $C$ is connected. Suppose that two distinct blocks of $C$, say $B_1$ and $B_2$, share a v-vertex of $G$. Since $G'$ contains an edge from both $B_1$ and $B_2$, $v$ is a separating vertex of $G'$ by Lemma~\ref{lem:Bsubgr}. However, by Theorem~\ref{the:sepvxG}, $v$ is then a separating vertex of the block $H'$ of $H$, a contradiction.

Hence no two distinct blocks in $C$ intersect in a v-vertex, and $C$ is a cluster of blocks of $G$. Let $C^*$ be a maximal cluster of blocks of $G$ containing $C$. Then $C^*$ is connected, and has no separating v-vertices by Theorem~\ref{the:blocks-sepvx}. Since $C^*$ is maximal, no e-vertex of $C^*$ can be contained in a block not in $C^*$. Consequently, for every e-vertex $e$ of $C^*$, all edges of the form $ev$ (for $v \in V$) are contained in $C^*$. Hence, by Lemma~\ref{lem:sub-inc}, $C^*$ is the incidence graph of a hypersubgraph $H^*$ of $H$. Now $H^*$ is connected and has no separating vertices since $C^*$ is connected and has no separating v-vertices. Moreover, $H^*$ contains the block $H'$. We conclude that $H^*=H'$ and $C^*=G'$. It follows that $G'$ is a maximal cluster of blocks of $G$.

Conversely, let $G'$ be a maximal cluster of blocks of $G$. Then for every e-vertex $e$ of $G'$, all edges of $G$ of the form $ev$ (for $v \in V$ such that $v \in e$) must be in $G'$, so by Lemma~\ref{lem:sub-inc}, $G'=\G(H')$ for some hypersubgraph $H'$ of $H$. Since $G'$ is connected and has no separating v-vertices, $H'$ is connected and non-separable. Hence $H'$ is  contained in a block $B$ of $H$. By the previous paragraph, $\G(B)$ is a maximal cluster of blocks of $G$, and it also contains the maximal cluster $G'$. We conclude that $\G(B)=G'$, that is,  $G'$ is the incidence graph of a block of $H$.
\end{proof}

The next corollary is immediate.

\begin{cor}
Let $H=(V,E)$ be a connected hypergraph without empty edges, and $G=\G(H)$ its incidence graph. Then $H$ is non-separable if and only if $G$ is a cluster of blocks of $G$.
\end{cor}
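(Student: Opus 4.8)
The plan is to derive this corollary directly from Theorem~\ref{the:HGblocks} by specializing the hypersubgraph $H'$ to be all of $H$. The first step is to observe that $H$ is non-separable if and only if $H$ is a block of itself. Indeed, by definition a block is a maximal non-separable hypersubgraph; if $H$ is non-separable then no hypersubgraph of $H$ properly contains $H$, so $H$ is trivially a maximal non-separable hypersubgraph, hence a block. Conversely, if $H$ is a block of $H$, then in particular $H$ is non-separable.

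With this reformulation, I would apply Theorem~\ref{the:HGblocks} with $H'=H$ and $G'=\G(H)=G$. The theorem then yields that $H$ is a block of $H$ if and only if $G$ is a \emph{maximal} cluster of blocks of $G$. Combining with the first step, $H$ is non-separable if and only if $G$ is a maximal cluster of blocks of $G$.

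It remains to reconcile ``maximal cluster of blocks of $G$'' with the statement's ``cluster of blocks of $G$''. The key point is that for the full incidence graph $G$ these two notions coincide: if $G$ is itself a cluster of blocks of $G$, then since there is no subgraph of $G$ properly containing $G$, this cluster is automatically maximal; conversely, a maximal cluster of blocks equal to $G$ is in particular a cluster of blocks. Thus ``$G$ is a cluster of blocks of $G$'' is equivalent to ``$G$ is a maximal cluster of blocks of $G$,'' and the corollary follows.

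I expect no serious obstacle here, as the result is essentially a restatement of Theorem~\ref{the:HGblocks} in the degenerate case $H'=H$; the only point requiring a moment's care is the last step, namely verifying that a cluster of blocks that happens to exhaust all of $G$ is necessarily maximal, so that the qualifier ``maximal'' may be dropped in the corollary's formulation.
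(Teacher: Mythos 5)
Your proposal is correct and takes essentially the same route the paper intends: the paper labels this corollary ``immediate'' from Theorem~\ref{the:HGblocks}, and your argument is exactly that specialization $H'=H$, $G'=G$. The two bridging observations you supply --- that $H$ is non-separable if and only if it is a block of itself, and that a cluster of blocks equal to all of $G$ is automatically a maximal cluster since no subgraph of $G$ properly contains $G$ --- are both valid, so nothing is missing.
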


To complete the discussion on the blocks of the incidence graph of a hypergraph, we show the following.

\begin{theo}
Let $H=(V,E)$ be a non-separable hypergraph with at least two edges of cardinality greater than 1. Let $G=\G(H)$ be its incidence graph and $x$ a cut vertex of $G$. Then $x \in E$ and $x$ is a weak cut edge of $H$.
\end{theo}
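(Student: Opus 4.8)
The plan is to establish the two conclusions in turn: first that $x$ must be an e-vertex, then that the corresponding edge is a cut edge, and finally that this cut edge is weak rather than strong. Since $H$ is non-separable it has no separating vertices, so by Theorem~\ref{the:sepvxG} no v-vertex of $G$ can be a cut vertex of $G$. Consequently the given cut vertex $x$ must be an e-vertex, that is, $x \in E$, which is the first assertion. Because $H$ has no empty edges, Theorem~\ref{the:cutG}(1) then immediately upgrades ``$x$ is a cut vertex of $G$'' to ``$x$ is a cut edge of $H$''. It remains only to show that this cut edge is weak.

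To prove weakness I would argue by contradiction, assuming $x$ is a \emph{strong} cut edge. Since $H$ is connected we have $\omega(H)=1$, so strength means $\omega(H-x)=|x|$, and by Theorem~\ref{the:strong-cut} the edge $x$ meets each of the $|x|$ connected components of $H-x$ in exactly one vertex. Note also that, being a cut edge, $x$ satisfies $|x|\ge 2$, so $x$ is itself an edge of cardinality greater than $1$; by hypothesis there is therefore a second edge $f\ne x$ with $|f|\ge 2$. I would then split on the size of $x$ relative to $V$.

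If $|x|<|V|$, the conclusion is quick: Corollary~\ref{cor:cut-e-v} applies to the connected hypergraph $H$ with its strong cut edge $x$ and yields a cut vertex of $H$, which by Theorem~\ref{the:sep-cut}(1) is a separating vertex of $H$ --- contradicting non-separability. The delicate case, and what I expect to be the main obstacle, is the degenerate possibility $|x|=|V|$, i.e.\ $x=V$. Here the $|x|=|V|$ components of $H-x$ must each carry exactly one of the $|V|$ vertices, forcing every component to be a single vertex; but the two endpoints of the second edge $f$ (which survives in $H-x$ since $f\ne x$) are adjacent in $H-x$ and hence lie in one common component, contradicting that all components are singletons. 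This is precisely the step where the hypothesis of at least two edges of cardinality greater than $1$ is indispensable: the tiny example $V=\{a,b\}$, $E=\{\{a,b\}\}$ is non-separable yet its unique spanning edge is a strong cut edge, so without that hypothesis the theorem would fail. Having reached a contradiction in both cases, $x$ cannot be strong, and is therefore a weak cut edge of $H$, completing the argument.
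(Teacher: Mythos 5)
Your proposal is correct and follows essentially the same route as the paper's own proof: rule out $x\in V$ via Theorem~\ref{the:sepvxG}, convert the cut vertex of $G$ into a cut edge of $H$ via Theorem~\ref{the:cutG}, and then refute strongness by splitting into the case $|x|<|V|$ (Corollary~\ref{cor:cut-e-v} plus Theorem~\ref{the:sep-cut}) and the case $|x|=|V|$ (where the singleton components of $H-x$ contradict the existence of a second edge of cardinality greater than $1$). Your write-up merely spells out the final case in more detail than the paper does, which states the same contradiction more tersely.
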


\begin{proof}
If $x \in V$, then $x$ is a separating vertex of $H$ by Theorem~\ref{the:sepvxG}, a contradiction. Hence
 $x \in E$, and  $x$ is a cut edge of $H$ by Theorem~\ref{the:cutG}. Suppose $x$ is a strong cut edge. If  $|x|<|V|$, then $H$ has a cut vertex by Corollary~\ref{cor:cut-e-v}, amd hence a separating vertex by Theorem~\ref{the:sep-cut}, a contradiction. Hence $|x|=|V|$, and by Theorem~\ref{the:strong-cut}, $H-x$ has exactly $|x|$ connected components, implying that $x$ is the only edge of $H$ of cardinality greater than 1, a contradiction. Hence $x$ must be a weak cut edge of $H$.
\end{proof}

In the last four theorems we attempt to generalize the following classic result from graph theory.

\begin{theo}\cite{BonMur}\label{the:BonMur} \begin{enumerate}
\item A connected graph is non-separable if and only if any two of its edges lie on a common cycle.
\item A connected graph with at least three vertices has no cut vertex if and only if any two of its vertices lie on a common cycle.
\end{enumerate}
\end{theo}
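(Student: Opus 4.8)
The plan is to prove the vertex version (Statement~2) by induction on the distance between the two vertices and then to derive the edge version (Statement~1) from it by a standard edge-subdivision argument; in each statement the implication from common cycles to non-separability is the easy one, and I would dispose of it first. For the backward implication of Statement~2, suppose every two vertices of $G$ lie on a common cycle and, for contradiction, that $v$ is a cut vertex; choosing $x,y$ in distinct components of $G-v$, a cycle through $x$ and $y$ splits into two $(x,y)$-paths, and since a cycle meets $v$ at most once, at least one of them avoids $v$ and connects $x$ to $y$ in $G-v$ --- a contradiction.

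For the forward implication, assume $G$ is connected, has at least three vertices, and has no cut vertex; I would show that any $u,v$ lie on a common cycle by induction on $d(u,v)$. In the base case $d(u,v)=1$, the edge $uv$ is not a bridge (if it were, deleting it would split $G$ into two components, and since $|V(G)|\ge 3$ one of them would contain a further vertex, making an endpoint of $uv$ a cut vertex), so $uv$ lies on a cycle. For the step, with $d(u,v)=k\ge 2$, take a neighbour $w$ of $v$ on a shortest $(u,v)$-path so that $d(u,w)=k-1$, and let $C$ be a cycle through $u$ and $w$ supplied by the inductive hypothesis. We may assume $v \notin C$, else we are done. As $w$ is not a cut vertex, $G-w$ contains a $(u,v)$-path $P$; let $x$ be the first vertex of $P$, read from $v$, that lies on $C$. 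Splicing the edge $vw$, a suitable arc of $C$ running from $w$ through $u$ to $x$, and the portion of $P$ from $x$ back to $v$ then yields a cycle through both $u$ and $v$.

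For Statement~1, the backward implication again proceeds by contradiction: if $v$ were a cut vertex, pick neighbours $u_1,u_2$ of $v$ in distinct components of $G-v$; any cycle containing the two edges $vu_1$ and $vu_2$ would have to use exactly these edges at $v$ and hence contain a $(u_1,u_2)$-path avoiding $v$, which is impossible. For the forward implication the cases $|V(G)|\le 2$ are vacuous (at most one edge), and when $|V(G)|\ge 3$ non-separability gives no cut vertex; given edges $e$ and $f$, I would subdivide each once, creating degree-two vertices $x_e$ and $x_f$. Since subdivision preserves connectedness and the absence of cut vertices, Statement~2 yields a cycle through $x_e$ and $x_f$ in the subdivided graph, and suppressing the two subdivision vertices turns it into a cycle of $G$ through both $e$ and $f$.

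The main obstacle is the inductive step of Statement~2: the concatenation must be arranged so that no vertex is visited twice, which requires a small case analysis according to whether $x$ equals $u$ and which of the two arcs of $C$ between $u$ and $w$ contains $x$. All the remaining steps are routine. As an alternative to the entire induction, one could instead invoke Menger's theorem, which directly produces two internally disjoint $(u,v)$-paths whose union is the desired cycle.
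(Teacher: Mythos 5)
Your proposal cannot be checked against a proof in the paper, because the paper gives none: this theorem is quoted from Bondy and Murty \cite{BonMur} as a known classical result, and its role here is purely instrumental --- it is the graph-theoretic input that, combined with the incidence-graph translations (Lemma~\ref{lem:W-W_G}, Theorems~\ref{the:cutG} and~\ref{the:sepvxG}), drives the paper's hypergraph theorems. Judged on its own, your proof is correct and is essentially the classical Whitney-style argument: the easy directions by splitting a common cycle at an alleged cut vertex into two internally disjoint paths, one of which survives the deletion; the forward direction of Statement~2 by induction on $d(u,v)$, where your splice of the edge $vw$, the arc of $C$ running from $w$ through $u$ to $x$, and the segment of $P$ from $x$ to $v$ is sound precisely because $x$ is the first vertex of $P$ (read from $v$) lying on $C$, so that segment is internally disjoint from $C$ and avoids $w$, while $v \notin C$; and Statement~1 from Statement~2 by subdivision, which indeed preserves connectedness and the absence of cut vertices, with the degree-two subdivision vertices forcing the resulting cycle to traverse $e$ and $f$. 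Two minor caveats concern generality, since in \cite{BonMur} graphs may have loops and parallel edges: your dismissal of $|V(G)| \le 2$ as ``vacuous (at most one edge)'' presumes simplicity --- with parallel edges the case is not vacuous, though still trivial, as any two parallel edges form a $2$-cycle --- and ``non-separable'' coincides with ``connected with no cut vertex'' only for loopless graphs, because a loop together with any other edge creates a separating vertex that is not a cut vertex; this is exactly the distinction the present paper is at pains to make for hypergraphs with singleton edges (Theorem~\ref{the:sep-cut}), so it is worth a sentence in a fully rigorous write-up. Neither caveat affects the paper's use of the theorem, which applies it only to incidence graphs, and these are simple.
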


\begin{theo}
Let $H=(V,E)$ be a non-separable hypergraph with $|V|\ge 2$ and $|E|\ge 2$, and let $G=\G(H)$ be its incidence graph. Assume in addition that $V \not\in E$ and that $H$ has no weak cut edges. Then any two distinct vertices of $H$ and any two distinct edges of $H$ lie on a common cycle.
\end{theo}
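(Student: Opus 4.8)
The plan is to transfer the whole question to the incidence graph $G=\G(H)$, prove that $G$ has no cut vertex, invoke the graph-theoretic Theorem~\ref{the:BonMur}(2), and translate the resulting cycles back to $H$ via Lemma~\ref{lem:W-W_G}. Since $H$ is non-separable it is connected and has no empty edges, so $G$ is connected by Theorem~\ref{the:conn}; moreover $|V(G)|=|V|+|E|\ge 4$, so $G$ has at least three vertices. Hence, once $G$ is shown to have no cut vertex, Theorem~\ref{the:BonMur}(2) will guarantee that any two vertices of $G$ lie on a common cycle of $G$.

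To prove that $G$ has no cut vertex, I would treat the two vertex types of $G$ separately. A cut vertex of $G$ lying in $V$ is a separating vertex of $H$ by Theorem~\ref{the:sepvxG}, contradicting non-separability; so no v-vertex of $G$ is a cut vertex. A cut vertex of $G$ lying in $E$, say $e$, is a cut edge of $H$ by Theorem~\ref{the:cutG}(1), so it remains to rule out cut edges of $H$. Weak cut edges are excluded by hypothesis. For a strong cut edge $e$, since $e\subseteq V$ we have either $|e|<|V|$ or $e=V$: in the first case Corollary~\ref{cor:cut-e-v} yields a cut vertex of $H$, hence a separating vertex by Theorem~\ref{the:sep-cut}(1), a contradiction; the second case $e=V$ is excluded by the hypothesis $V\not\in E$. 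Thus $H$ has no cut edges, $G$ has no cut e-vertices, and $G$ has no cut vertex. (Equivalently, one could quote the preceding theorem, which says every cut vertex of $G$ is a weak cut edge of $H$ --- its hypotheses hold because non-separability together with $|E|\ge2$ forbids singleton edges by Lemma~\ref{lem:sep-vx}, leaving at least two edges of cardinality greater than $1$; the point of $V\not\in E$ is exactly to dispose of the borderline strong cut edge $e=V$.)

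Finally, I would translate Theorem~\ref{the:BonMur}(2) back to $H$. For two distinct vertices $u,v\in V$, they lie on a common cycle $C_G$ of $G$; because $G$ is bipartite with parts $V$ and $E$, $C_G$ alternates v-vertices and e-vertices, so I can re-root it to start and end at the v-vertex $u$. Reading the resulting alternating sequence as a sequence $W$ of vertices and edges of $H$ gives $W_G=C_G$, whence $W$ is a cycle of $H$ by Lemma~\ref{lem:W-W_G}(2); its anchors include $u$ and $v$. For two distinct edges $e,f\in E$, the common cycle $C_G$ has even length at least $4$ and hence meets $V$, so I re-root it at a v-vertex; the associated $W$ is again a cycle of $H$ by Lemma~\ref{lem:W-W_G}(2), and since the e-vertices of $C_G$ are precisely the edges of $W$, both $e$ and $f$ lie on $W$.

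I expect the principal difficulty to be the bookkeeping in the no-cut-vertex step rather than any hard computation: one must correctly assemble Theorems~\ref{the:sepvxG}, \ref{the:cutG}, \ref{the:sep-cut} and Corollary~\ref{cor:cut-e-v}, and pinpoint the single place --- a strong cut edge exhausting all of $V$ --- where $V\not\in E$ is genuinely used. The translation is then routine, its only subtlety being the re-rooting of each $G$-cycle at a v-vertex so that Lemma~\ref{lem:W-W_G} applies and the prescribed vertices (respectively edges) appear as anchors (respectively edges) of the cycle produced in $H$.
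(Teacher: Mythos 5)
Your proposal is correct and follows essentially the same route as the paper's own proof: rule out cut v-vertices of $G$ via Theorem~\ref{the:sepvxG}, rule out cut e-vertices via Theorem~\ref{the:cutG} together with the no-weak-cut-edge hypothesis, Corollary~\ref{cor:cut-e-v}, Theorem~\ref{the:sep-cut}, and $V\not\in E$, then apply Theorem~\ref{the:BonMur} and translate back with Lemma~\ref{lem:W-W_G}. Your extra care about re-rooting the $G$-cycles at a v-vertex merely makes explicit a step the paper leaves implicit.
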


\begin{proof}
Suppose that $G$ has a separating vertex $x$. If $x \in V$, then by Theorem~\ref{the:sepvxG}, $x$ is a separating vertex of $H$, a contradiction. Thus $x \in E$, and $x$ is a cut edge of $H$ by Theorem~\ref{the:cutG}. By assumption, $x$ is a strong cut edge and $|x|<|V|$. Hence $H$ has a cut vertex, and hence a separating vertex, by Corollary~\ref{cor:cut-e-v} and Theorem~\ref{the:sep-cut}, respectively --- a contradiction.

Hence $G$ has no cut vertex, and by Theorem~\ref{the:BonMur}, any two vertices of $G$ lie on a common cycle. It then follows from Lemma~\ref{lem:W-W_G} that any two vertices, and any two edges, of $H$ lie on a common cycle.
\end{proof}

\begin{theo}
Let $H=(V,E)$ be a connected hypergraph with $|V|\ge 2$, without edges of cardinality less than 2, and without vertices of degree less than 2. Then the following are equivalent:
\begin{enumerate}
\item $H$ has no separating vertices and no cut edges.
\item Every pair of elements from $V \cup E$ lie on a common cycle.
\item Every pair of vertices lie on a common cycle.
\item Every pair of edges lie on a common cycle.
\end{enumerate}
\end{theo}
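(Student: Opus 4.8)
The plan is to transport everything to the incidence graph $G=\G(H)$ and then invoke the classical Theorem~\ref{the:BonMur}. First I would record what the hypotheses buy us. Since every edge has cardinality at least $2$, the hypergraph $H$ has no empty edges, so by Theorem~\ref{the:conn} the graph $G$ is connected. Moreover each v-vertex has degree $\deg_H(v)\ge 2$ and each e-vertex $e$ has degree $|e|\ge 2$ in $G$, so $G$ has minimum degree at least $2$; and since some vertex has degree at least $2$ we also get $|E|\ge 2$, whence $G$ has at least four vertices. The backbone of the whole proof is the single identity: \emph{condition $(1)$ holds if and only if $G$ has no cut vertex.} Indeed, Theorem~\ref{the:sepvxG} says a v-vertex is a separating vertex of $H$ exactly when it is a cut vertex of $G$, while Theorem~\ref{the:cutG}(1) says an e-vertex $e$ is a cut edge of $H$ exactly when it is a cut vertex of $G$; as $V(G)=V\cup E$, the absence of separating vertices \emph{and} cut edges in $H$ is precisely the absence of cut vertices in $G$.

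Next I would translate cycles. By Lemma~\ref{lem:W-W_G}(2), the cycles of $H$ correspond to the cycles of $G$, the anchor vertices of an $H$-cycle being exactly its v-vertices and the edges of an $H$-cycle being exactly its e-vertices. Hence \emph{two elements of $V\cup E$ lie on a common cycle in $H$ if and only if the two corresponding vertices lie on a common cycle in $G$} (reading a bipartite cycle of $G$ starting from a v-vertex to recover the $H$-cycle). Now apply Theorem~\ref{the:BonMur}(2) to $G$, which is connected with at least three vertices: $G$ has no cut vertex if and only if any two vertices of $G$ lie on a common cycle. Combining with the two translations gives $(1)\Leftrightarrow(2)$ in one stroke, and the specializations $(2)\Rightarrow(3)$ and $(2)\Rightarrow(4)$ are immediate.

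It remains to close the loop with $(3)\Rightarrow(1)$ and $(4)\Rightarrow(1)$, and this is where I expect the main obstacle: conditions $(3)$ and $(4)$ only control pairs of v-vertices, respectively e-vertices, yet I must rule out \emph{every} cut vertex of $G$, including those of the opposite type. The device is to use the degree hypotheses to replace a neighbour of a putative cut vertex by a neighbour of the desired type lying in the same component of $G\b x$. Suppose $G$ has a cut vertex $x$, and let $C_1,C_2$ be two components of $G\b x$, each of which contains a neighbour of $x$. For $(3)\Rightarrow(1)$: if $x=e$ is an e-vertex its neighbours are already v-vertices $u_1\in C_1,\,u_2\in C_2$; if $x=v$ is a v-vertex, a neighbour of $v$ in $C_i$ is an e-vertex $e_i$ with $|e_i|\ge 2$, so $e_i$ contains a vertex $u_i\neq v$, and the edge $e_iu_i$ of $G$ places $u_i\in C_i$. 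Either way I obtain distinct v-vertices $u_1,u_2$ separated by $x$; by $(3)$ they lie on a common cycle, which yields a path from $u_1$ to $u_2$ in $G$ avoiding $x$, a contradiction.

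The argument for $(4)\Rightarrow(1)$ is dual. If the cut vertex $x=v$ is a v-vertex, its neighbours in $C_1,C_2$ are already distinct e-vertices. If $x=e$ is an e-vertex, a neighbour $v\in C_i$ is a v-vertex with $\deg_H(v)\ge 2$, hence incident in $G$ with some e-vertex $f_i\neq e$, and the edge $vf_i$ places $f_i\in C_i$; this produces distinct e-vertices $f_1,f_2$ separated by $x$. In both situations $(4)$ supplies a common cycle through the two e-vertices, and hence a path between them avoiding $x$, again a contradiction. Thus each of $(3)$ and $(4)$ forces $G$ to be free of cut vertices, i.e.\ condition $(1)$, and together with $(1)\Leftrightarrow(2)\Rightarrow(3),(4)$ this establishes the equivalence of all four statements.
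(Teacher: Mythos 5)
Your proof is correct, and it shares the paper's overall framework --- everything is transported to the incidence graph $G=\G(H)$, condition (1) is identified with ``$G$ has no cut vertex'' via Theorems~\ref{the:sepvxG} and \ref{the:cutG}, cycles are translated by Lemma~\ref{lem:W-W_G}, and Theorem~\ref{the:BonMur} yields $(1)\Leftrightarrow(2)$ --- but the return implications are handled by a genuinely different mechanism. The paper chains $(3)\Rightarrow(4)\Rightarrow(1)$ using its block machinery: if all v-vertices (respectively, all e-vertices) of $G$ pairwise lie on common cycles, then by Theorem~\ref{the:blocks} they all lie in a single block of $G$, so any further block is a $K_2$ whose e-vertex (respectively, v-vertex) has degree $1$ in $G$, contradicting the hypothesis that $H$ has no singleton edges (respectively, no pendant vertices); hence $G$ has no cut vertices. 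You instead prove $(3)\Rightarrow(1)$ and $(4)\Rightarrow(1)$ directly: given a putative cut vertex $x$ of $G$, the degree hypotheses let you manufacture two vertices of the required type (v-vertices for (3), e-vertices for (4)) in distinct components of $G\b x$, and the common cycle through them contains two internally disjoint paths, at least one of which avoids $x$ --- a contradiction. The two arguments invoke the cardinality and degree hypotheses at exactly the same logical spots, but yours is more elementary, needing only the definition of a cut vertex rather than the block decomposition of Theorem~\ref{the:blocks}, and it makes transparent which hypothesis excludes which type of cut vertex; the paper's version capitalizes on machinery it has already built and records the intermediate implication $(3)\Rightarrow(4)$ explicitly along the way. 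One small bookkeeping point in your favour: you verify $|V(G)|\ge 3$ (indeed $\ge 4$) before citing Theorem~\ref{the:BonMur}, a hypothesis the paper passes over more quickly.
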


\begin{proof}
Let $G=\G(H)$ be the incidence graph of $H$.

$(1) \Rightarrow (2)$: Since $H$ has no separating vertices and no cut edges, $G$ has no cut vertices by Theorems~\ref{the:sepvxG} and \ref{the:cutG}. Hence by Theorem~\ref{the:BonMur}, since $|V(G)|\ge 3$, every pair of vertices of $G$ lie on a common cycle in $G$, and therefore every pair of elements from $V \cup E$ lie on a common cycle in $H$.

$(2) \Rightarrow (3)$: This is obvious.

$(3) \Rightarrow (4)$: Since every pair of vertices of $H$ lie on a common cycle in $H$, every pair of v-vertices of $G$ lie on a common cycle in $G$. Consequently, by Theorem~\ref{the:blocks}, all v-vertices of $G$ are contained in the same block $B$, and if $G$ has any other blocks, then they are isomorphic to $K_2$. Let $B_1$ be one of these ``trivial'' blocks, and let $e$ be its e-vertex. Then $\deg_G(e)=1$ ---  a contradiction, since $H$ has no singleton edges. It follows that $G$ has no ``trivial'' blocks, and hence no cut vertices. Therefore every pair of e-vertices of $G$ lie on a common cycle in $G$, and every pair of edges of $H$ lie on a common cycle in $H$.

$(4) \Rightarrow (1)$: Since every pair of edges of $H$ lie on a common cycle in $H$, every pair of e-vertices of $G$ lie on a common cycle in $G$. Consequently, all e-vertices of $G$ are contained in the same block $B$, and if $G$ has any other blocks, then they are isomorphic to $K_2$. Let $B_1$ be one of these ``trivial'' blocks, and let $v$ be its v-vertex. Then $\deg_G(v)=1$ ---  a contradiction, since $H$ has no pendant vertices. It follows that $G$ has no ``trivial'' blocks, and hence no cut vertices. Therefore $H$ has no separating vertices and no cut edges by Theorems~\ref{the:sepvxG} and \ref{the:cutG}, respectively.
\end{proof}

\begin{theo}\label{the:no-cut-e}
Let $H=(V,E)$ be a connected hypergraph with $|V|\ge 2$, without edges of cardinality less than 2, and without vertices of degree less than 2. Then the following are equivalent:
\begin{enumerate}
\item $H$ has no cut edges.
\item Every pair of elements from $V \cup E$ lie on a common strict closed trail.
\item Every pair of vertices lie on a common strict closed trail.
\item Every pair of edges lie on a common strict closed trail.
\end{enumerate}
\end{theo}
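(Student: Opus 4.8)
The plan is to transfer everything to the incidence graph $G=\G(H)$ by way of Lemma~\ref{lem:W-W_G}(3), which identifies the strict closed trails of $H$ with exactly those closed trails of $G$ that visit each e-vertex at most once. I would prove the cycle of implications $(1)\Rightarrow(2)\Rightarrow(3)\Rightarrow(1)$ together with $(2)\Rightarrow(4)\Rightarrow(1)$; here $(2)\Rightarrow(3)$ and $(2)\Rightarrow(4)$ are immediate, since $V$ and $E$ are each subsets of $V\cup E$.

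The heart of the matter is $(1)\Rightarrow(2)$, and I would begin by pinning down the block structure of $G$. Because $H$ has no edge of cardinality less than $2$ and no vertex of degree less than $2$, every vertex of $G$ has degree at least $2$; combined with Theorem~\ref{the:cutG} (no cut edge of $H$ means no cut e-vertex of $G$), this forces $G$ to be bridgeless, since a bridge $ve$ would make its e-endpoint $e$, of degree $|e|\ge 2$, a cut vertex. Hence every block of $G$ is $2$-connected with at least three vertices, every cut vertex of $G$ is a v-vertex, and—crucially—each e-vertex lies in a \emph{unique} block. Now, given two elements $x,y\in V\cup E$, I would follow the block tree of $G$ from $x$ to $y$ through blocks $B_1,\dots,B_t$ meeting in cut vertices $c_1,\dots,c_{t-1}$ (all v-vertices), with $x\in B_1$ and $y\in B_t$, arranging that consecutive ports are distinct. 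Setting $p_0=x$, $p_i=c_i$, and $p_t=y$, in each $2$-connected block $B_i$ the ports $p_{i-1},p_i$ lie on a common cycle by Theorem~\ref{the:BonMur}(2), giving two internally disjoint $(p_{i-1},p_i)$-paths $P_i,Q_i$. Stitching the outward arcs $P_1P_2\cdots P_t$ to $y$ and the return arcs $Q_tQ_{t-1}\cdots Q_1$ back to $x$ yields a closed walk through both $x$ and $y$.

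It then remains to verify this walk is a strict closed trail. No edge repeats: within a block the arcs $P_i,Q_i$ are edge-disjoint (two arcs of a cycle), and distinct blocks are edge-disjoint. Each e-vertex lies in exactly one block and is internal to at most one of that block's two arcs, so it is visited at most once; the only vertices visited twice are the intermediate cut vertices $c_i$, which are v-vertices and hence unconstrained. Thus the walk is a closed trail of $G$ meeting each e-vertex at most once, and by Lemma~\ref{lem:W-W_G}(3) it pulls back to a strict closed trail of $H$ containing $x$ and $y$; the case $t=1$ is a single cycle inside one block.

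For the two reverse implications I would argue the contrapositive. If $H$ has a cut edge $e$, then $H-e$ has components $C_1,\dots,C_k$ with $k\ge 2$, each meeting $e$. The key point is that a strict closed trail $W$ traverses $e$ \emph{at most once}, so it cannot leave a component through $e$ and return: if $e$ occurs in $W$, deleting that occurrence leaves a strict trail in $H-e$ joining the two used endpoints of $e$, forcing all anchors and all non-$e$ edges of $W$ into a single component of $H-e$. Since each component contains a vertex of $e$ and, as every vertex has degree at least $2$, also an edge other than $e$, I can pick vertices $u\in C_1$, $w\in C_2$ and edges $f_1\in E(C_1)$, $f_2\in E(C_2)$ in distinct components; then no strict closed trail carries both $u$ and $w$, nor both $f_1$ and $f_2$, refuting (3) and (4) respectively. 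The main obstacle is the block-stitching in $(1)\Rightarrow(2)$: getting the bookkeeping right so that the assembled walk reuses no edge and revisits only v-vertices (the separating vertices of $H$), which is precisely what lets a strict closed trail—unlike a cycle—thread through several blocks.
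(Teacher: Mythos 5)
Your proposal is correct, and for the crux implication $(1)\Rightarrow(2)$ it is essentially the paper's argument: pass to the incidence graph $G$, walk along the block tree between the two target elements, pick a cycle through the two ports inside each intermediate block via Theorem~\ref{the:BonMur}, and glue; your ``out along the $P_i$, back along the $Q_i$'' walk is exactly the paper's edge-disjoint union $C_1\oplus\cdots\oplus C_k$ read as a single closed trail, and both write-ups pull the result back to $H$ with Lemma~\ref{lem:W-W_G}(3). Two differences are worth recording. First, your backward implications take a more elementary route: the paper closes the single cycle $(1)\Rightarrow(2)\Rightarrow(3)\Rightarrow(4)\Rightarrow(1)$, proving $(3)\Rightarrow(4)$ and $(4)\Rightarrow(1)$ by locating cut e-vertices of $G$ and splitting closed trails of $G$ there, whereas you return to $(1)$ twice by arguing directly in the hypergraph: a strict closed trail uses a cut edge $e$ at most once, so deleting that single occurrence leaves all of its anchors and all of its other edges inside one component of $H-e$, and the degree hypothesis then supplies a vertex and a non-$e$ edge in a second component to refute (3) or (4). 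This avoids a second round trip through Theorem~\ref{the:cutG} and Lemma~\ref{lem:W-W_G}, at the cost of proving the ``cannot re-cross the cut'' fact by hand; the two arguments are the same idea in different clothing. Second, you explicitly verify that $G$ is bridgeless (a bridge $ve$ would make its e-endpoint, of degree $|e|\ge 2$, a cut vertex), hence that every block encountered is $2$-connected with at least three vertices; the paper invokes Theorem~\ref{the:BonMur} inside each block without stating this, although it is needed there and follows from the same degree hypotheses, so your version makes explicit a small step the paper leaves implicit.
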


\begin{proof}
Let $G=\G(H)$ be the incidence graph of $H$.

$(1) \Rightarrow (2)$: Since $H$ has no cut edges, $G$ has no cut e-vertices by Theorem~\ref{the:cutG}. Take any two elements $x_0$ and $x_k$ of $V \cup E$. We construct a strict closed trail in $H$ containing $x_0$ and $x_k$ as follows. Let $B_1$ and $B_k$ be blocks of $G$ containing $x_0$ and $x_k$, respectively, and let $P=B_1 x_1B_2 \ldots B_{k-1}x_{k-1}B_k$ be the unique $(B_1,B_k)$-path in the block tree of $G$. Here, of course, $B_1,\ldots,B_k$ are blocks of $G$, $x_1,\ldots,x_{k-1}$ are separating (cut) vertices of $G$, and each separating vertex $x_i$ (necessarily a v-vertex) is shared between blocks $B_i$ and $B_{i+1}$. (We may assume that vertex $x_0$ does not lie in block $B_2$, and $x_k$ does not lie in $B_{k-1}$, otherwise the path $P$ may be shortened accordingly.) By Theorem~\ref{the:BonMur}, each pair of vertices $x_{i-1}$ and $x_i$, for $i=1,\ldots,k$,  lie on a common cycle $C_i$ within block $B_i$. Note that these cycles $C_1,\ldots,C_k$ are pairwise edge-disjoint and intersect only in the v-vertices $x_1,\ldots,x_{k-1}$. Let $T=C_1 \oplus \ldots \oplus C_k$. Then $T$ is a closed trail in $G$ containing $x_0$ and $x_k$ that does not repeat any e-vertices. (We count the first and last vertex of a closed trail --- which are identical --- as one occurrence of this vertex.) We conclude that every pair of vertices of $G$ lie on a common closed trail in $G$ that traverses each e-vertex at most once. Therefore, by Lemma~\ref{lem:W-W_G}, every pair of elements from $V \cup E$ lie on a common strict closed trail in $H$.

$(2) \Rightarrow (3)$: This is obvious.

$(3) \Rightarrow (4)$: Since every pair of vertices of $H$ lie on a common strict closed trail in $H$, every pair of v-vertices of $G$ lie on a common closed trail in $G$ that visits each e-vertex at most once. Suppose $G$ has a cut e-vertex $e$. Let let $v_1$ and $v_2$ be two v-vertices in distinct connected components of $G \b e$. Since $e$ is a cut vertex, $v_1$ and $v_2$ are disconnected in $G \b e$. On the other hand, by assumption,  $v_1$ and $v_2$ lie on a closed trail $T$ that traverses $e$ at most once. Hence $T \b e$ contains a $(v_1,v_2)$-path of $G \b e$, a contradiction.
Consequently, $G$ has no cut e-vertices, which implies (as seen in the previous paragraph) that any two vertices --- and hence any two e-vertices ---  lie on a common closed trail in $G$ that does not repeat any e-vertices. Therefore every pair of edges of $H$ lie on a common strict closed trail in $H$.

$(4) \Rightarrow (1)$: Since every pair of edges of $H$ lie on a common strict closed trail in $H$, every pair of e-vertices of $G$ lie on a common closed trail in $G$ that does not repeat any e-vertices. Suppose $G$ has a cut e-vertex $e$. Since $H$ has no vertices of degree less than 2, $G \b e$ has no trivial connected components; that is, each connected component of $G \b e$ contains $e$-vertices. Let $e_1$ and $e_2$ be two e-vertices from distinct connected components of $G \b e$. Then $e_1$ and $e_2$ are disconnected in $G \b e$. On the other hand, by assumption,  $e_1$ and $e_2$ lie on a closed trail $T$ that traverses $e$ at most once. Hence $T \b e$ contains an $(e_1,e_2)$-path of $G \b e$, a contradiction. It follows that $G$ has no cut e-vertices, and $H$ has no cut edges by Theorem~\ref{the:cutG}.
\end{proof}

We conclude with the dual version of the previous theorem.

\begin{cor}
Let $H=(V,E)$ be a connected hypergraph with $|E|\ge 2$, without edges of cardinality less than 2, and without vertices of degree less than 2. Then the following are equivalent:
\begin{enumerate}
\item $H$ has no separating vertices.
\item Every pair of elements from $V \cup E$ lie on a common pseudo cycle.
\item Every pair of edges lie on a common pseudo cycle.
\item Every pair of vertices lie on a common pseudo cycle.
\end{enumerate}
\end{cor}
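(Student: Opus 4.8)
The plan is to deduce this corollary from Theorem~\ref{the:no-cut-e} by passing to the dual hypergraph $H^T$ and translating every statement through the duality machinery. First I would check that $H^T$ satisfies the hypotheses of Theorem~\ref{the:no-cut-e}. Since $H$ is connected, non-empty, and has neither empty edges nor isolated vertices (every vertex has degree at least $2$), the dual $H^T$ is well defined, is connected by Corollary~\ref{cor:omega}, and has $|V(H^T)|=|E|\ge 2$. The edges of $H^T$ are the sets $v^T$ with $|v^T|=\deg_H(v)\ge 2$, so $H^T$ has no edges of cardinality less than $2$; dually, each vertex $e$ of $H^T$ has degree $\deg_{H^T}(e)=|e|\ge 2$, so $H^T$ has no vertices of degree less than $2$. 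Thus Theorem~\ref{the:no-cut-e} applies to $H^T$ and yields the equivalence of the four statements ``$H^T$ has no cut edges'', ``every pair of elements of $E^T\cup V^T$ lie on a common strict closed trail in $H^T$'', ``every pair of vertices of $H^T$ lie on a common strict closed trail'', and ``every pair of edges of $H^T$ lie on a common strict closed trail''.

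Next I would set up the translation dictionary. For the first statement, Corollary~\ref{cor:sep^T} gives that $v$ is a separating vertex of $H$ if and only if $v^T$ is a cut edge of $H^T$; since the edges of $H^T$ are exactly the $v^T$, the hypergraph $H$ has no separating vertices precisely when $H^T$ has no cut edges. For the remaining statements, the key is that the dual transformation $W\mapsto W^T$ of Lemma~\ref{lem:W^T} sends a pseudo cycle of $H$ to a strict closed trail of $H^T$ (part (4)) and, applied in $H^T$ with $(H^T)^T=H$, sends a strict closed trail of $H^T$ to a pseudo cycle of $H$ (part (3)); hence pseudo cycles of $H$ correspond to strict closed trails of $H^T$. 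Under this correspondence the anchor vertices of $W$ become the edges of $W^T$ and the edges of $W$ become the anchor vertices of $W^T$. Consequently a vertex $v$ of $H$ lies on a pseudo cycle $W$ (as an anchor) if and only if the edge $v^T$ lies on the strict closed trail $W^T$, and an edge $e$ of $H$ lies on $W$ if and only if the vertex $e\in E^T$ lies on $W^T$.

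With this dictionary, the four statements of the corollary match the four statements of Theorem~\ref{the:no-cut-e} for $H^T$ one by one, the vertex/edge interchange explaining why ``pair of edges'' and ``pair of vertices'' trade places between items (3) and (4). Since the latter four are equivalent, so are the former, which completes the argument.

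The step I expect to be the main obstacle is verifying, cleanly and in both directions, that ``lying on'' is preserved by the walk duality: one must keep careful track of the roles of anchors, edges, and floaters, confirming that an element of $V\cup E$ lies on a pseudo cycle of $H$ exactly when its dual lies on the corresponding strict closed trail of $H^T$, and that the assignment $W\mapsto W^T$ is a genuine bijection between the two families of closed walks (which rests on $(W^T)^T=W$ and $(H^T)^T=H$). Everything else is a routine check of hypotheses and a statement-by-statement comparison.
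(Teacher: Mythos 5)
Your proposal is correct and takes essentially the same route as the paper: the paper likewise deduces the corollary by applying Theorem~\ref{the:no-cut-e} to the dual $H^T$, using Corollary~\ref{cor:omega} to verify that $H^T$ satisfies the hypotheses, Corollary~\ref{cor:sep^T} to identify separating vertices of $H$ with cut edges of $H^T$, and Lemma~\ref{lem:W^T} to identify pseudo cycles of $H$ with strict closed trails of $H^T$. Your explicit verification of the hypotheses and of the anchor/edge dictionary under $W \mapsto W^T$ merely spells out details that the paper compresses into ``the corollary follows easily.''
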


\begin{proof}
Let $H^T$ be the dual of $H$, and observe that (by Corollary~\ref{cor:omega} and since $H$ must have at least 2 vertices) $H^T$ satisfies the assumptions of Theorem~\ref{the:no-cut-e}. Since separating vertices of $H$ correspond precisely to cut edges of $H^T$ by Corollary~\ref{cor:sep^T}, and pseudo cycles of $H$ to strict closed trails of $H^T$ by Lemma~\ref{lem:W^T}, the corollary follows easily from Theorem~\ref{the:no-cut-e}.
\end{proof}

\section{Conclusion}

In this paper, we generalized several concepts related to connection in graphs to hypergraphs. While some of these concepts generalize naturally in a unique way, or behave in hypergraphs similarly to graphs, other concepts lend themselves to more than one natural generalization, or reveal surprising new properties. Many more concepts from graph theory remain unexplored for hypergraphs, and we hope that our work will stimulate more research in this area.

\end{document}